\renewcommand*\backref[1]{\ifx#1\relax \else (Cited on #1) \fi}
\title{\bf Generalization Bounds of Nonconvex-(Strongly)-Concave Stochastic Minimax Optimization}
\author{
Siqi Zhang \footnote{Equal contribution.} \thanks{Department of Applied Mathematics and Statistics, Johns Hopkins University, USA. \texttt{szhan207@jhu.edu}}
\and
Yifan Hu \footnotemark[1]\ \thanks{Risk Analytics and Optimization Chair, EPFL, Switzerland. \texttt{yifan.hu@epfl.ch}}
\and
Liang Zhang \thanks{Department of Computer Science, ETH Z\"urich, Switzerland. \texttt{liang.zhang@inf.ethz.ch}, \texttt{niao.he@inf.ethz.ch}}
\and
Niao He \footnotemark[4]
}
\begin{document}

\maketitle

\begin{abstract}
    This paper takes an initial step to systematically investigate the generalization bounds of algorithms for solving nonconvex-(strongly)-concave (NC-SC\,/\,NC-C) stochastic minimax optimization measured by the stationarity of primal functions. We first establish \textit{algorithm-agnostic generalization bounds} via \emph{uniform convergence} between the empirical minimax problem and the population minimax problem. The sample complexities for achieving $\eps$-generalization are $\tilde{\mathcal{O}}\autopar{d\kappa^2\epsilon^{-2}}$ and $\tilde{\mathcal{O}}\autopar{d\epsilon^{-4}}$ for NC-SC and NC-C settings, respectively, where $d$ is the dimension and $\kappa$ is the condition number. We further study the \textit{algorithm-dependent generalization bounds} via stability arguments of algorithms. In particular, we introduce a novel stability notion for minimax problems and build a connection between generalization bounds and the stability notion. As a result, we establish \textit{algorithm-dependent generalization bounds} for \emph{stochastic gradient descent ascent (SGDA)} algorithm and the more general \emph{sampling-determined algorithms}.
\end{abstract}

\section{Introduction}
\label{sec:intro}

In this paper, we consider stochastic minimax problems:
\begin{equation}
    \label{eq:main_problem}
    \min_{x\in \mathcal{X}}\max_{y\in\mathcal{Y}}\ F(x,y)\triangleq \E_\xi\automedpar{f\autopar{x,y;\xi}},
\end{equation}
where $\mathcal{X}\subseteq\mathbb{R}^{d}$ and $\mathcal{Y}\subseteq\mathbb{R}^{d'}$ ($d, d'\in\mathbb{N}_+$) are two nonempty closed convex sets, $\xi\in\Xi$ is a random variable following an unknown distribution $\mathcal{D}$, and $f:\mathcal{X}\times\mathcal{Y}\times\Xi\rightarrow\mathbb{R}$ is continuously differentiable and Lipschitz smooth jointly in $x$ and $y$ for any $\xi$. We denote the objective \eqref{eq:main_problem} as the \emph{population minimax problem}. Throughout the paper, we focus on the case where $F$ is nonconvex in $x$ and (strongly)-concave in $y$, i.e., \emph{nonconvex-(strongly)-concave (NC-SC~/~NC-C)}. Such minimax problems appear ubiquitously in practical applications, including adversarial training \citep{madry2018towards,wang2019convergence}, generative adversarial networks (GANs) \citep{goodfellow2014generative,sanjabi2018convergence,lei2020sgd}, reinforcement learning \citep{dai2017learning,dai2018sbeed,huang2020convergence} and robust training \citep{sinha2018certifying}.

Although the distribution $\mathcal{D}$ often remains unknown, one generally has access to a dataset $S=\autobigpar{\xi_1,\cdots,\xi_n}$ consisting of $n$ independently and identical distributed (i.i.d.) samples from $\mathcal{D}$. Correspondingly, researchers resort to solving an \emph{empirical minimax problem}:
\begin{equation}
    \label{eq:FS_problem}
    \min_{x\in \mathcal{X}}\max_{y\in\mathcal{Y}}\ F_S(x,y)\triangleq \frac{1}{n}\sum_{i=1}^nf(x,y;\xi_i).
\end{equation} 
A natural question arises: 
\begin{center}
    \emph{How does the output of an algorithm $\mathcal{A}$ for solving the empirical minimax problem\\  generalizes on the population minimax problem?}
\end{center}

We first specify the measurement. Since functions $F$ and $F_S$ are nonconvex in $x$, finding their global optimal solutions is generally intractable. Instead, one aims to design an algorithm $\mathcal{A}$ that finds an
\emph{$\eps$-stationary point} \cite{lin2020gradient},
i.e.,
\begin{equation*}
    \autonorm{\nabla\Phi(\mathcal{A}_x(S))}\leq\epsilon
    \quad \text{or} \quad
    \dist\autopar{0,\partial\Phi(\mathcal{A}_x(S))}\leq\epsilon,
\end{equation*}
where $\Phi(x)\triangleq\max_{y\in\mathcal{Y}}F(x,y)$ and $\Phi_S(x)\triangleq\max_{y\in\mathcal{Y}}F_S(x,y)$ are primal functions, $\mathcal{A}_x(S)$ is the $x$-component of the output of any algorithm $\mathcal{A}$ for solving \eqref{eq:FS_problem}, $\dist\autopar{y,X}\triangleq\inf_{x\in X}\autonorm{y-x}$ and $\partial\Phi$ is the (Fr\'echet) subdifferential of $\Phi$. When $\Phi$ is nonsmooth, we resort to the gradient norm of its Moreau envelope 
to measure the first-order stationarity as it provides an upper bound on $\dist\autopar{0,\partial\Phi(\cdot)}$ \citep{davis2019stochastic}.

Taking the gradient norm as an example, the error for solving the population minimax problem \eqref{eq:main_problem} via solving its empirical counterpart \eqref{eq:FS_problem} consists of two terms\footnote{For the simplicity of demonstration, here we assume there is no constraint, and primal functions are differentiable. We will formally introduce the detailed settings in Section \ref{sec:prelim}.}:
\begin{equation}
\label{eq:opt_decomposition}
    \begin{split}
        \EE\autonorm{\nabla \Phi(\mathcal{A}_x(S))}
        \leq
        \underbrace{\EE\autonorm{\nabla \Phi_S(\mathcal{A}_x(S))}}_{\text{optimization error}}+\underbrace{\EE\autonorm{\nabla \Phi(\mathcal{A}_x(S))-\nabla \Phi_S(\mathcal{A}_x(S))}}_{\text{generalization error}}.
    \end{split}
\end{equation}
Such decomposition on the gradient norm also appears in nonconvex minimization, e.g., \cite{foster2018uniform,mei2018landscape,davis2022graphical,lei2022stability}. 
The optimization error corresponds to the error of solving the empirical minimax problem \eqref{eq:FS_problem} and has been widely studied ~\citep{luo2020stochastic,yang2020catalyst}. On the other hand, the generalization error for minimax problems remains largely unexplored. A few recent works~\citep{farnia2021train,zhang2021generalization,lei2021stability,ozdaglar2022good}
studied the generalization performances in minimax optimization measured by the function value-based gap. However, these do not fit well in the nonconvex setting since the optimization part remains intractable.

The goal of our paper is to characterize the generalization error 
$$\EE\autonorm{\nabla \Phi(\mathcal{A}_x(S))-\nabla \Phi_S(\mathcal{A}_x(S))}.$$ 
It is not easy as both $\Phi_S(\cdot)$ and $\mathcal{A}_x(S)$ depend on the dataset $S$, which induces correlation issues when taking expectation. To address such dependence issue, one may use \emph{uniform convergence} or \emph{stability arguments}.

Uniform convergence characterizes the difference between the empirical minimax optimization and the population minimax problem on worst $x\in\mathcal{X}$, i.e., 
$$\EE \sup_{x\in\mathcal{X}}\autonorm{\nabla\Phi(x)-\nabla\Phi_S(x)}.$$
Although uniform convergence has been extensively studied in stochastic optimization~\citep{kleywegt2002sample,mei2018landscape,davis2022graphical}, a key difference for stochastic minimax optimization is that the primal function is the average over $n$ i.i.d. random functions. Thus existing techniques in uniform convergence for classical stochastic optimization are not directly applicable here. One needs to additionally characterize the differences between maximizers 
$$\argmax_{y\in\mathcal{Y}}F_S(x,y)\quad\text{and}\quad\argmax_{y\in\mathcal{Y}}F(x,y).$$
Note that uniform convergence is invariant to the choice of algorithms and provides an upper bound on the generalization error for any $\mathcal{A}_x(S)\in\mathcal{X}$. Thus the derived generalization bound is \emph{algorithm-agnostic} that applies to any algorithms. Since it is the worst case over all $x\in\mathcal{X}$, the derived bounds generally involve the dimension of $x$.

We further investigate generalization bounds via stability arguments. This approach analyzes the stability of specific algorithms and builds a connection between stability and generalization. It has been extensively studied in stochastic minimization \citep{bousquet2002stability,shalev2010learnability, hardt2016train,klochkov2021stability} and minimax problems recently~\citep{farnia2021train,lei2021stability,boob2021optimal,yang2022differentially}.  Most of these work focuses on the measurement of the function-value gap. Under such measurement, generalization follows directly from stability. However, for the measurement of stationarity for nonconvex problems, building up a link between stability and generalization becomes significantly more challenging.  Compared to uniform convergence, the stability-based generalization bound is generally independent of the dimension $d$. As it requires a case-by-case analysis of stability for different algorithms,  it is \textit{algorithm-dependent}. We particularly study the generalization of the widely used \emph{stochastic gradient descent ascent (SGDA)}~\citep{farnia2021train} and a class of algorithms that extends SGDA called \emph{sampling-determined algorithms} (see Definition \ref{defn:sampling-determined_alg})~\citep{lei2022stability}.

\subsection{Contributions}
In this paper, we initiate a systematic study on the generalization bounds (see Table \ref{table:summary_results}) for nonconvex stochastic minimax problems from both \textit{uniform convergence} and \textit{stability argument} perspectives.  Our contributions are two-fold:

\begin{itemize}[leftmargin = 2em]
    \item We establish the first uniform convergence results between the population and the empirical nonconvex minimax optimization in NC-SC and NC-C settings, measured by stationarity. Our results provide an \textit{algorithm-agnostic} generalization bound for any algorithms that solve empirical nonconvex minimax problems. Specifically, the sample complexities to achieve an $\eps$-uniform convergence or an $\eps$-generalization error are $\tilde{\mathcal{O}}\autopar{d\kappa^2\epsilon^{-2}}$ and $\tilde{\mathcal{O}}\autopar{d\epsilon^{-4}}$ for the NC-SC and NC-C settings, respectively.
    \item From the stability argument perspective, we first introduce a novel stability measurement based on stationarity; then, we establish the connection between the stability and the generalization error of an algorithm in both NC-SC and NC-C settings. We further provide the \textit{algorithm-dependent} generalization error measured by the stationarity for the classical SGDA algorithm and sampling-determined algorithms utilizing their stability.
\end{itemize}

\begin{table}[tbp]
    \centering
    \small
    \renewcommand{\arraystretch}{2}
    \begin{threeparttable}[b]
        \begin{tabular}{c |
        >{\centering}p{0.2\textwidth}|
        >{\centering}p{0.25\textwidth}|
        >{\centering\arraybackslash}p{0.25\textwidth}}
            \hline \hline
            \multirow{2}{*}{\diagbox{\textbf{Setting}\tnote{1}}{\textbf{Approach}}}
            & \multirow{2}{*}{\textbf{Uniform Convergence}}
            & \multicolumn{2}{c}{\textbf{Stability Argument}}
            \\
            \cline{3-4}
            & 
            & \textbf{SGDA}
            & \textbf{Sampling-determined Alg.}
            \\
            \hline \hline
            \textbf{NC-SC}
            & \makecell[c]{$\Tilde{\mathcal{O}}\autopar{\kappa\sqrt{\frac{d}{n}}}$\\
            Theorem \ref{thm:NC-SC Agnostic}
            }
            & \makecell[c]{$\mathcal{O}\autopar{\kappa^{1+\zeta_1}\autopar{\frac{T^{1-\zeta_1}}{n}+\frac{1}{\sqrt{n}}}}$
            \\
            Corollary \ref{thm:NC-SC SGDA Specific}
            }
            & \makecell[c]{$\mathcal{O}\autopar{\kappa \autopar{\sqrt{\frac{T}{n}}+\frac{1}{\sqrt{n}}}}$\\
            Corollary \ref{thm:NC-SC SDA}
            }
            \\
            \hline
            
            \textbf{NC-C}
            & \makecell[c]{$\Tilde{\mathcal{O}}\autopar{\autopar{\frac{d}{n}}^{1/4}}$
            \\
            Theorem \ref{thm:NC-C Agnostic}
            }
            & \makecell[c]{$\mathcal{O}\autopar{\autopar{\frac{T^{1-\zeta_2}}{n}}^{1/6} + \autopar{\frac{1}{n}}^{1/8}}$
            \\
            Corollary \ref{thm:NC-C SGDA Specific}
            }
            & \makecell[c]{$\mathcal{O}\autopar{\autopar{\frac{T}{n}}^{1/12} + \autopar{\frac{ 1}{n}}^{1/8}}$
            \\
            Corollary \ref{thm:NC-C SDA}
            }
            \\
            \hline \hline
        \end{tabular}
        \begin{tablenotes}
            \item[1] $\tilde{O}(\cdot)$ hides logarithmic factors, $d$: the dimension of $\mathcal{X}$, $n$: sample size, $\kappa$: condition number $\frac{L}{\mu}$ \\
            $L$: Lipschitz smoothness parameter, $\mu$: strong concavity parameter, $T$: iteration number of algorithms\\
            $\zeta_{1}, \zeta_{2} \in(0,1)$: constants depending on stepsizes, refer to Corollary \ref{thm:NC-SC SGDA Specific} and \ref{thm:NC-C SGDA Specific} for details. SGDA has specific requirements on stepsize, while sampling-determined algorithms do not have restrictions on stepsize.
 		\end{tablenotes}
    \end{threeparttable}
    \caption{Summary of Generalization Bounds for Nonconvex Stochastic Minimax Optimization}
    \label{table:summary_results}
\end{table}

\subsection{Literature Review}

\paragraph{Nonconvex Minimax Optimization}
Various algorithms have been proposed to solve NC-SC minimax optimization~\citep{nouiehed2019solving, lin2020gradient,lin2020near,luo2020stochastic,yang2020global,boct2020alternating,xu2020unified,lu2020hybrid,yan2020optimal,guo2021novel,sharma2022federated,zhang2022sapd+}. 
For stochastic NC-SC minimax problems, \citet{zhang2022sapd+} achieves the best-known complexity of $\mathcal{O}(\kappa\epsilon^{-4})$, and $\mathcal{O}(\kappa^2\epsilon^{-3})$ result with additional individual smoothness assumption. Also, \citet{yang2022faster} introduced a stochastic smoothed-AGDA algorithm which achieves the best-known $\mathcal{O}(\kappa^2\epsilon^{-4})$ for single-loop algorithms. The lower bounds of NC-SC problems are extensively studied in several recent works \citep{zhang2021complexity, han2021lower, li2021complexity}.

The primal function for NC-SC problems is generally smooth, while the primal functions can be nonsmooth for NC-C problems~\citep{thekumparampil2019efficient,lin2020gradient}.
Recent years witnessed a surge of algorithms for NC-C problems in deterministic, finite-sum, and stochastic settings, e.g., \citep{zhang2020single, ostrovskii2021efficient, thekumparampil2019efficient, zhao2020primal,nouiehed2019solving,yang2020catalyst,lin2020gradient,boct2020alternating,rafique2021weakly}, to name a few. To the best of our knowledge, \cite{thekumparampil2019efficient,yang2020catalyst,lin2020near} achieved the best known $\tilde\cO(\eps^{-3})$ complexity in the deterministic case, while \cite{yang2020catalyst} achieved the best known $\tilde\cO(n^{3/4}\eps^{-3})$ complexity in the finite-sum case, and \cite{zhang2022sapd+} provided the best known $\cO(\eps^{-6})$ complexity in the stochastic case. These works differ from our paper in that we aim to characterize the generalization error of algorithms while they focus mainly on the optimization error of the algorithms.

\paragraph{Uniform Convergence}
A series of works from stochastic optimization and statistical learning theory studied uniform convergence on the worst-case differences between the population objective $L(x)$ and its empirical objective $L_S(x)$ constructed via sample average approximation (SAA, also known as empirical risk minimization). Interested readers may refer to prominent results in statistical learning~\citep{fisher1922mathematical, vapnik1999overview, van2000asymptotic}. 
For finite-dimensional problem,  \citet{kleywegt2002sample} showed that the sample complexity is $\cO(d\eps^{-2})$ to achieve an $\eps$-uniform convergence in high probability, i.e., $\mathbb{P}(\sup_{x\in\mathcal{X}} |L(x) - L_S(x)|\geq \eps)$. 
For nonconvex empirical objectives, 
\citet{mei2018landscape} and \citet{davis2022graphical} established $\tilde \cO(d\eps^{-2})$ sample complexity of uniform convergence measured by 
the stationarity for nonconvex smooth and weakly convex functions, respectively. 
In addition, \citet{wang2017differentially} used uniform convergence to demonstrate the generalization and the gradient complexity of differentially private algorithms for stochastic optimization. Recently, \citet{amir2022thinking} demonstrated the generalization error of gradient descent on a generalized linear model using uniform convergence and showed that the stability argument is insufficient to achieve generalization.   
To the best of our knowledge, our paper is the first to study uniform convergence for nonconvex minimax optimization.

\paragraph{Stability-Based Generalization Bounds}
This line of research focuses on analyzing generalization bounds of stochastic optimization via the uniform stability property of specific algorithms, including SAA~\citep{bousquet2002stability, shalev2009stochastic}, stochastic gradient descent~\citep{hardt2016train,bassily2020stability,lei2022stability}, and uniformly stable algorithms \citep{klochkov2021stability}. Recently, a series of works further studied the generalization performances measured by the function-value gap of various algorithms in minimax problems. \citet{farnia2021train} gave the generalization bound for the outputs of gradient-descent-ascent (GDA) and proximal-point algorithm (PPA) in both (strongly)-convex-(strongly)-concave and nonconvex-nonconcave smooth minimax problems. \citet{lei2021stability} focused on GDA and provided a comprehensive study for different settings of minimax problems with generalization measured by function-value gaps. \citet{boob2021optimal} provided stability and generalization results of extragradient algorithm (EG) in the smooth convex-concave setting. On the other hand, \citet{zhang2021generalization} studied stability and generalization of the empirical minimax problem under the (strongly)-convex-(strongly)-concave setting, assuming that one can find the optimal solution of the empirical minimax problem. Our work differs from those in that we propose a novel stability notion for minimax optimization measured by stationarity and build up the link between such stability and generalization error.

\section{Problem Setting}
\label{sec:prelim}

\paragraph{Notations}
Throughout the paper, we use $\autonorm{\cdot}$ as the $\ell_2$-norm, $\nabla f=\autopar{\nabla_x f, \nabla_y f}$ as the gradient of a function $f$, for nonnegative functions $f$ and $g$, we say $f=\mathcal{O}\autopar{g}$ if $f(x)\leq cg(x)$ for some $c>0$. We denote $\proj_\mathcal{X}(x')\triangleq\argmin_{x\in\mathcal{X}}\autonorm{x-x'}^2$ as the projection operator. 
Let $\mathcal{A}(S)\triangleq\autopar{\mathcal{A}_x(S),\mathcal{A}_y(S)}$ denote the output of an algorithm $\mathcal{A}$ on the empirical minimax problem \eqref{eq:FS_problem} with dataset $S$.
Given $\mu\geq 0$, we say a function $g:\mathcal{X}\rightarrow\mathbb{R}$  is
$\mu$-strongly convex if $ g(x)-\frac{\mu}{2}\autonorm{x}^2 $ is convex, and it is $\mu$-strongly concave if $-g$ is $\mu$-strongly convex. Function $g$ is $\mu$-weakly convex if $g(x)+\frac{\mu}{2}\autonorm{x}^2 $ is convex
(see more notations and standard definitions in Appendix \ref{apdx_sec:notations}).

\begin{definition}[Smooth Function] 
    \label{defn:L_smooth_minimax}
We say a function $f:\mathcal{X}\times\mathcal{Y}\rightarrow\mathbb{R}$ is $L$-smooth jointly in $(x,y)$ if the function is continuously differentiable, and there exists a constant $L>0$ such that for any $(x_1,y_1), (x_2,y_2)\in\mathcal{X}\times\mathcal{Y}$, we have 
    \begin{equation*}
        \begin{split}
            \autonorm{\nabla_x f(x_1,y_1)-\nabla_x f(x_2,y_2)}&\leq L(\|x_1-x_2\|+\|y_1-y_2\|),\\
            \autonorm{\nabla_y f(x_1,y_1)-\nabla_y f(x_2,y_2)}&\leq L(\|x_1-x_2\|+\|y_1-y_2\|).
        \end{split}
    \end{equation*}
\end{definition}

By definition, it is easy to find that an $L$-smooth function is also $L$-weakly convex. Next, we introduce the main assumptions used throughout the paper.

\begin{assumption}[Main Settings]
    \label{as:ncsc-ncc}
    We assume the following:
    \begin{itemize}[leftmargin = 2em]
        \item The function $f(x,y;\xi)$ is $L$-smooth jointly in $(x,y)\in\mathcal{X}\times \mathcal{Y}$ for any $\xi$.
        \item The function $f(x,y;\xi)$ is $\mu$-strongly concave in $y\in\mathcal{Y}$ for any $x\in\mathcal{X}$ and any $\xi$ where $\mu\geq 0$.
        \item The gradient norms of $f(\cdot,\cdot;\xi)$ are bounded by $G$ respectively for any $\xi$. 
        \item The domains $\mathcal{X}$ and $\mathcal{Y}$ are compact convex sets, i.e., there exists constants $D_\mathcal{X}, D_\mathcal{Y}>0$ such that  for any $x\in\mathcal{X}$, $\|x\|^2\leq D_\mathcal{X}$ and for any $y\in\mathcal{Y}$, $\|y\|^2\leq D_\mathcal{Y}$, respectively.
    \end{itemize}
\end{assumption}
Note that compact domain assumption appears widely in uniform convergence literature~\citep{kleywegt2002sample,davis2022graphical}. 
Under Assumption \ref{as:ncsc-ncc}, the objective function $F$ is $L$-smooth in $(x,y)$ and $\mu$-strongly concave for any $\xi$. When $\mu>0$, we call the population minimax problem \eqref{eq:main_problem} a \emph{nonconvex-strongly-concave} (NC-SC) minimax problem; when $\mu=0$, we call it a \emph{nonconvex-concave} (NC-C) minimax problem. 

\begin{definition}[Moreau Envelope] 
For an $L$-weakly convex function $\Phi$ and $0<\lambda<1/L$, we use $\Phi^\lambda(x)$ and $\prox_{\lambda \Phi}(x)$ to denote the the Moreau envelope of $\Phi$ and the proximal point of $\Phi$ for a given point $x$, defined as following:
\begin{equation}
    \begin{split}
        \Phi^\lambda(x)\triangleq \min_{z\in\mathcal{X}}\autobigpar{\Phi(z)+\frac{1}{2\lambda}\autonorm{z-x}^2},
        \quad
        \prox_{\lambda \Phi}(x)\triangleq \argmin_{z\in\mathcal{X}}\autobigpar{\Phi(z)+\frac{1}{2\lambda}\autonorm{z-x}^2}.
    \end{split}
\end{equation}
\end{definition}

Below we recall some important properties on the primal function $\Phi$ and its Moreau envelope $\Phi^{\lambda}(x)$ presented in the literature \citep{davis2019stochastic,thekumparampil2019efficient,lin2020gradient,lin2020near}.
\begin{lemma}[Properties of $\Phi$ and $\Phi^\lambda$]
    \label{lm:properties_Phi_envelope}
    In the NC-SC setting ($\mu>0$), both $\Phi(x)$ and $\Phi_S(x)$ are $\tilde{L}\triangleq L(1+\kappa)$-smooth with the condition number $\kappa\triangleq L/\mu$, 
    In the NC-C setting ($\mu=0$), the primal function $\Phi$ is $L$-weakly convex, its Moreau envelope $\Phi^{\lambda}(x)$ is Lipschitz smooth, also $ \nabla\Phi^{\lambda}(x)=\lambda^{-1}(x-\hat{x})$, $\autonorm{\nabla\Phi^{\lambda}(x)}\geq\dist\autopar{0,\partial\Phi(\hat{x})}$, where $\hat{x}=\prox_{\lambda\Phi}(x)$ and $0<\lambda<1/L$.
\end{lemma}

\paragraph{Performance Measurement}  
In the NC-SC setting, the primal functions $\Phi$ and $\Phi_S$ are both $\tilde{L}$-smooth. Regarding the constraint, we measure the difference between the population and empirical minimax problems using \emph{the generalized gradient of the population and the empirical primal functions}, i.e., $\EE\autonorm{\mathcal{G}_\Phi(\mathcal{A}_x(S))-\mathcal{G}_{\Phi_S}(\mathcal{A}_x(S))}$, where $\mathcal{G}_\Phi(x)\triangleq \tilde{L}(x-\proj_\mathcal{X}(x-(1/\tilde{L})\nabla\Phi(x)))$. The following inequality summarized the relationship of measurements in terms of generalized gradient and in terms of gradient used in Section \ref{sec:intro}.
\begin{equation}
\label{eq:relationship_1}
    \begin{split}
    \underbrace{\EE\autonorm{\mathcal{G}_\Phi(\mathcal{A}_x(S))-\mathcal{G}_{\Phi_S}(\mathcal{A}_x(S))}}_{\text{generalization error of Algorithm $\mathcal{A}$}}
\leq 
    \EE\autonorm{\nabla\Phi(\mathcal{A}_x(S))-\nabla\Phi_S(\mathcal{A}_x(S))}
\leq
    \underbrace{\EE \Big[\max_{x\in\mathcal{X}} \autonorm{\nabla\Phi(x)-\nabla\Phi_S(x)}\Big],}_{\text{algorithm-agnostic uniform convergence}}
    \end{split}
\end{equation}
where the first inequality holds as projection is a non-expansive operator. The term in the left-hand side (LHS) above is the generalization error of an algorithm $\mathcal{A}$ we desire in the NC-SC case.

For the NC-C case, the primal function $\Phi(x)$ is $L$-weakly convex, we use the gradient of its Moreau Envelope to characterize the (near)-stationarity \citep{davis2019stochastic}. We measure the difference between the population and empirical problems using the difference between \emph{the gradients of their respective Moreau envelopes}. 

The generalization error and the uniform convergence in the  NC-C case should be given as follows:
\begin{equation}
\label{eq:relationship_2}
    \begin{split}
    \underbrace{\EE \Big\|\nabla\Phi^{1/(2L)}(\mathcal{A}_x(S))-\nabla\Phi_S^{1/(2L)}(\mathcal{A}_x(S))\Big\|}_{\text{generalization error of Algorithm $\mathcal{A}$}}
\leq
    \underbrace{\EE \Big[\max_{x\in\mathcal{X}} \Big\|\nabla\Phi^{1/(2L)}(x)-\nabla\Phi_S^{1/(2L)}(x)\Big\|\Big]}_{\text{algorithm-agnostic uniform convergence}}.
    \end{split}
\end{equation}
The term in the LHS above is the generalization error of an algorithm $\mathcal{A}$ we desire in the NC-C case.

\section{Uniform Convergence and Generalization}
\label{sec:main_result}
In this section, we discuss the sample complexity for achieving $\eps$-uniform convergence and $\eps$-generalization error for NC-SC and NC-C  stochastic minimax optimization.

\subsection{NC-SC Stochastic Minimax Optimization}
Under the NC-SC setting, the next theorem demonstrates the uniform convergence between gradients of primal functions of the population and empirical minimax problem, which is a structural property of the empirical and population minimax problem. We defer the proof to Appendix \ref{apdx_sec:proof_NCSC_unif_convergence}.

\begin{theorem}[Uniform Convergence, NC-SC]
    \label{thm:NC-SC Agnostic}
    Under Assumption \ref{as:ncsc-ncc} with $\mu>0$, we have
    \begin{equation}
        \EE \automedpar{\max_{x\in\mathcal{X}}\autonorm{\nabla\Phi(x)-\nabla \Phi_S(x)}}= \Tilde{\mathcal{O}}\autopar{d^{1/2}\kappa n^{-1/2}}.
    \end{equation}
    Furthermore, to achieve $\eps$-uniform convergence and $\eps$-generalization error for any algorithm $\mathcal{A}$ such that  $\EE\autonorm{\mathcal{G}_\Phi(\mathcal{A}_x(S))-\mathcal{G}_{\Phi_S}(\mathcal{A}_x(S))}\leq\eps$, it suffices to have
    \begin{equation}
        n= n_{\mathrm{NCSC}}^* \triangleq \Tilde{\cO}\autopar{d\kappa^2\eps^{-2}}.
    \end{equation}
\end{theorem}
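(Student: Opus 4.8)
The plan is to bound $\EE \max_{x\in\mathcal{X}}\|\nabla\Phi(x)-\nabla\Phi_S(x)\|$ by combining a pointwise concentration estimate with a covering (chaining / Dudley-type) argument over the compact set $\mathcal{X}$, and to handle the key complication that $\nabla\Phi$ and $\nabla\Phi_S$ are gradients of $\max$-type functions rather than empirical averages of i.i.d. functions. First I would invoke Danskin's theorem: since $F$ and $F_S$ are $\mu$-strongly concave in $y$, the inner maxima are attained at unique points $y^*(x)\triangleq\argmax_y F(x,y)$ and $y_S^*(x)\triangleq\argmax_y F_S(x,y)$, and $\nabla\Phi(x)=\nabla_x F(x,y^*(x))$, $\nabla\Phi_S(x)=\nabla_x F_S(x,y_S^*(x))$. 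Then I split

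\begin{equation}
\|\nabla\Phi(x)-\nabla\Phi_S(x)\| \le \underbrace{\|\nabla_x F(x,y^*(x))-\nabla_x F_S(x,y^*(x))\|}_{\text{(I): pure SAA error at a fixed $y$}} + \underbrace{\|\nabla_x F_S(x,y^*(x))-\nabla_x F_S(x,y_S^*(x))\|}_{\text{(II): inner-solution mismatch}}.
\end{equation}

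For term (II), $L$-smoothness of $F_S$ in $(x,y)$ gives $\text{(II)}\le L\|y^*(x)-y_S^*(x)\|$, and standard strong-concavity/perturbation arguments (the kind behind Lemma 3.1) bound $\|y^*(x)-y_S^*(x)\|\le \mu^{-1}\|\nabla_y F(x,y^*(x))-\nabla_y F_S(x,y^*(x))\|$, so (II) is at most $\kappa$ times another fixed-$y$ SAA deviation. Hence everything reduces to controlling, uniformly over $x$, the quantity $\|\nabla_{x} F(x,y^*(x)) - \nabla_{x} F_S(x,y^*(x))\|$ and its $\nabla_y$ analogue — i.e. $\sup_{(x,y)\in\mathcal{X}\times\mathcal{Y}}\|\nabla f\text{-average deviation}\|$, which \emph{is} now a genuine empirical process over the pair $(x,y)$.

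Next I would run the uniform bound on $\sup_{(x,y)}\|\frac1n\sum_i \nabla_x f(x,y;\xi_i)-\nabla_x F(x,y)\|$ (and similarly for $\nabla_y$). Each coordinate is an average of i.i.d. bounded ($\|\nabla f\|\le G$), $L$-Lipschitz (in $(x,y)$) random vectors; by a vector Bernstein/Hoeffding inequality the pointwise deviation is $\tilde O(G/\sqrt n)$ with exponential tails, and Lipschitzness in $(x,y)$ lets me cover $\mathcal{X}\times\mathcal{Y}$ (diameters $O(\sqrt{D_\mathcal{X}}),O(\sqrt{D_\mathcal{Y}})$) at scale $\delta$ with $(\text{const}/\delta)^{d+d'}$ balls. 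A union bound over the net plus the Lipschitz interpolation on each ball, optimizing $\delta\sim 1/\sqrt n$, yields $\EE\sup_{(x,y)}\|\cdot\| = \tilde O(\sqrt{(d+d')/n}\,)$ — the $\sqrt{\log n}$ from the union bound is what produces the $\tilde O$. Folding in the factor $\kappa$ from term (II) and absorbing $d'$ into $d$ per the paper's convention gives $\EE\max_x\|\nabla\Phi-\nabla\Phi_S\| = \tilde O(d^{1/2}\kappa n^{-1/2})$. Finally, combining with \eqref{eq:relationship_1} (projection non-expansiveness passes the bound down to the generalized-gradient generalization error) and setting the RHS $\le\eps$ gives $n = \tilde O(d\kappa^2\eps^{-2})=n^*_{\mathrm{NCSC}}$.

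The main obstacle is term (II): one must show $x\mapsto y_S^*(x)$ (a random, data-dependent map) stays uniformly close to $x\mapsto y^*(x)$, which requires that the \emph{gradient} SAA error in $y$ — not merely the function-value error — is small uniformly in $x$; this is precisely why a uniform (covering-based) argument is needed rather than a single concentration inequality, and why smoothness of $f$ (not just of $F$) enters. A secondary technical point is making the covering argument clean: $\nabla f(\cdot,\cdot;\xi)$ is $L$-Lipschitz only because $f$ is $L$-smooth, and one should be careful that the composed map $x\mapsto \nabla_x f(x,y^*(x);\xi)$ inherits a Lipschitz constant of order $L(1+\kappa)=\tilde L$ (through the Lipschitzness of $y^*(\cdot)$), so the effective diameter-to-scale ratio, and hence the log-covering number, is $\tilde O(d)$ with the right $\kappa$-dependence already visible at this stage — consistent with the final bound.
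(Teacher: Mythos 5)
Your proposal is correct in its essentials and would deliver the stated rate, but it takes a genuinely different route through the key step. Both proofs start from the same Danskin decomposition, $\|\nabla\Phi(x)-\nabla\Phi_S(x)\|\le\|\nabla_xF(x,y^*(x))-\nabla_xF_S(x,y^*(x))\|+L\|y^*(x)-y_S^*(x)\|$, and both use a $\upsilon$-net plus exponential-moment/union-bound machinery to get the $\sqrt{\log Q}$ rather than $Q$ dependence. The difference is in how $\|y^*(x)-y_S^*(x)\|$ is controlled. You bound it pathwise by $\mu^{-1}\|\nabla_yF(x,y^*(x))-\nabla_yF_S(x,y^*(x))\|$ — a correct variational-inequality perturbation argument, valid also on the constrained set $\mathcal{Y}$ — which reduces everything to uniform convergence of $\nabla F_S$ to $\nabla F$ over the product $\mathcal{X}\times\mathcal{Y}$, a genuine i.i.d.\ empirical process handled by covering $\mathcal{X}\times\mathcal{Y}$. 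The paper instead uses a leave-one-out stability argument for the empirical maximizer: it shows $\|y^*_{S^{(i)}}(x)-y_S^*(x)\|\le 4G/(\mu n)$, deduces $\EE\|y^*(x)-y_S^*(x)\|\le \mathcal{O}(G/(\mu\sqrt n))$ at each fixed net point, and establishes sub-Gaussianity of the full quantity $\|\nabla\Phi(x_k)-\nabla\Phi_S(x_k)\|$ via McDiarmid, so that only a net over $\mathcal{X}$ is ever needed. The practical consequence is the dimension dependence: your covering of $\mathcal{Y}$ introduces a factor $\sqrt{d+d'}$, whereas the paper's treatment of the $y$-variable is covering-free and yields $\sqrt{d}$ with $d=\dim\mathcal{X}$ only, as in the theorem statement. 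Your approach buys a cleaner, purely deterministic reduction to a classical Mei--Bai--Montanari-type uniform convergence result and avoids the stability machinery; the paper's buys independence from $d'$. Both give $\tilde{\mathcal{O}}(\kappa\sqrt{\cdot/n})$ and hence $n=\tilde{\mathcal{O}}(\cdot\,\kappa^2\eps^{-2})$ after passing the bound to the generalized gradients via non-expansiveness of the projection.
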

To the best of our knowledge, it is the first uniform convergence and algorithm-agnostic generalization error bound result for NC-SC stochastic minimax problem. In comparison, existing works on generalization error analysis of minimax problems \citep{farnia2021train,lei2021stability} using stability arguments are algorithm-specific and can only handle function-value gap measurement. \citet{zhang2021generalization} establish algorithm-agnostic stability and generalization in the strongly-convex-strongly-concave regime, yet their analysis does not extend to the nonconvex regime. 

Error decomposition \eqref{eq:opt_decomposition} and Theorem \ref{thm:NC-SC Agnostic} imply that for any algorithm that achieves an $\eps$-stationarity point of the empirical minimax problem, its sample complexity for finding an $\eps$-stationary point of the population minimax problem is at most $\Tilde{\cO}\autopar{d\kappa^2\eps^{-2}}$. 
Such an observation provides generalization guarantees for any algorithms that solve finite-sum (empirical) minimax problems. It is especially useful for some SOTA algorithms like Catalyst-SVRG \citep{zhang2021complexity} and finite-sum version SREDA \citep{luo2020stochastic} that are complicated and thus there lack stability analysis and generalization bounds for them. 

\paragraph{Proof Sketch}
We briefly discuss the proof of Theorem \ref{thm:NC-SC Agnostic}.

\noindent\textbf{Step 1}:
First, we use a $\upsilon$-net $\{x_k\}_{k=1}^Q$~\citep{vapnik1999overview} to decompose the error and handle the dependence issue between $\argmax_{x\in\mathcal{X}}\autonorm{\nabla \Phi_S(x)- \nabla\Phi(x)}$ and $\Phi_S(x)$.

\noindent\textbf{Step 2}:
For any $x_k$ within the $\upsilon$-net, we have the error following decomposition
$$
\begin{aligned}
\|\nabla \Phi_S(x_k)- \nabla\Phi(x_k)\|
\leq 
(\|\nabla \Phi_S(x_k)- \nabla\Phi(x_k)\|-\EE\|\nabla \Phi_S(x)- \nabla\Phi(x_k)\|)
\qquad+ \EE\|\nabla \Phi_S(x_k)- \nabla\Phi(x_k)\|.
\end{aligned}
$$
When bounding  $\EE\|\nabla \Phi_S(x_k)- \nabla\Phi(x_k)\|$ in the right-hand side (RHS), we need to characterize the difference between $\argmax_{y\in\mathcal{Y}}F_S(x,y)$ and $\argmax_{y\in\mathcal{Y}}F(x,y)$ using the stability argument of sample average approximation~\citep{shalev2009stochastic}. This step appears uniquely for minimax optimization due to the special structure of the primal function $\Phi_S(x)=\max_y \frac{1}{n}\sum_{i=1}^n f(x,y;\xi_i)$, which is not the average over $n$ random functions. Then we utilize the established stability argument to show that the first term in the RHS is sub-Gaussian and apply the concentration inequality, which leads to the result. 
\qed

\subsection{NC-C Stochastic Minimax Optimization}
\label{sec:uniform_NCC}

In this subsection, we derive the uniform convergence and algorithm-agnostic generalization bounds for NC-C stochastic minimax problems. Recall that the primal function $\Phi$ is  $L$-weakly convex \citep{thekumparampil2019efficient}, and thus that $\nabla \Phi$ is not well-defined. We use the gradient norm of the Moreau envelope of the primal function as the measurement \citep{davis2019stochastic}.

\begin{theorem}[Uniform Convergence, NC-C]
    \label{thm:NC-C Agnostic}
    Under Assumption \ref{as:ncsc-ncc} with $\mu=0$, we have
    \begin{equation}
        \EE\automedpar{\max_{x\in\mathcal{X}}\autonorm{\nabla  \Phi_S^{1/(2L)}(x)-\nabla \Phi^{1/(2L)}(x)}}= \Tilde{\mathcal{O}}\autopar{d^{1/4}n^{-1/4}}.
    \end{equation}
    Furthermore, to achieve $\eps$-uniform convergence and $\eps$-generalization error for any algorithm $\mathcal{A}$ such that  $\EE\automedpar{\autonorm{\nabla\Phi^{1/(2L)}(\mathcal{A}_x(S))-\nabla\Phi_S^{1/(2L)}(\mathcal{A}_x(S))}}\leq\eps$, it suffices to have
    \begin{equation}
        n= n_{\mathrm{NCC}}^* \triangleq \tilde\cO\autopar{d\eps^{-4}}.
    \end{equation}
\end{theorem}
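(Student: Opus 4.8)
The plan is to bypass any algorithm‑specific or stability argument and instead reduce the uniform convergence of the Moreau‑envelope gradients to the classical uniform convergence of the stochastic function values $F-F_S$ over the compact set $\mathcal{X}\times\mathcal{Y}$. First I would fix $x\in\mathcal{X}$ and set $\hat x=\prox_{(1/(2L))\Phi}(x)$, $\hat x_S=\prox_{(1/(2L))\Phi_S}(x)$, so that by Lemma~\ref{lm:properties_Phi_envelope} one has $\nabla\Phi^{1/(2L)}(x)-\nabla\Phi_S^{1/(2L)}(x)=2L(\hat x_S-\hat x)$. The maps $q(z)=\Phi(z)+L\autonorm{z-x}^2$ and $q_S(z)=\Phi_S(z)+L\autonorm{z-x}^2$ are each $L$‑strongly convex on $\mathcal{X}$ (because $\Phi,\Phi_S$ are $L$‑weakly convex), with constrained minimizers $\hat x$ and $\hat x_S$. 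Writing the strong‑convexity inequality at each constrained minimizer tested against the other point, adding the two, and cancelling the common quadratic term, I obtain the deterministic bound
\[
   \max_{x\in\mathcal{X}}\autonorm{\nabla\Phi^{1/(2L)}(x)-\nabla\Phi_S^{1/(2L)}(x)}\;\le\;2\sqrt{2L\,\max_{z\in\mathcal{X}}\autoabs{\Phi(z)-\Phi_S(z)}},
\]
which already fixes the $n^{-1/4}$ scaling once the value gap on the right is controlled at rate $n^{-1/2}$.

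Next, since $\Phi(z)=\max_{y\in\mathcal{Y}}F(z,y)$ and $\Phi_S(z)=\max_{y\in\mathcal{Y}}F_S(z,y)$ and the maximum is $1$‑Lipschitz in the sup‑norm, $\max_{z\in\mathcal{X}}\autoabs{\Phi(z)-\Phi_S(z)}\le\max_{(x,y)\in\mathcal{X}\times\mathcal{Y}}\autoabs{F(x,y)-F_S(x,y)}$, so it remains to bound $\EE\automedpar{\max_{(x,y)}\autoabs{F(x,y)-F_S(x,y)}}$. For this I would invoke a standard uniform law of large numbers over the compact domain $\mathcal{X}\times\mathcal{Y}\subseteq\R^{d+d'}$: symmetrize, then apply Dudley's entropy integral to the Rademacher process indexed by $(x,y)$ with value $\frac1n\sum_i\sigma_i f(x,y;\xi_i)$, whose increments are $\frac{G}{\sqrt n}$‑sub‑Gaussian in the Euclidean metric by the bounded‑gradient (hence $G$‑Lipschitz) part of Assumption~\ref{as:ncsc-ncc}; together with the covering bound $N(\epsilon)\le(C\sqrt{D_\mathcal{X}+D_\mathcal{Y}}/\epsilon)^{d+d'}$ this yields $\EE\automedpar{\max_{(x,y)}\autoabs{F-F_S}}=\tilde{\mathcal{O}}(\sqrt{d/n})$ (absorbing $G,D_\mathcal{X},D_\mathcal{Y}$ and $d'$ into $\tilde{\mathcal{O}}$; note only increments of $f$ enter, so no boundedness of $f$ itself is needed).

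Combining the two steps with Jensen's inequality $\EE\sqrt{Y}\le\sqrt{\EE Y}$ gives
\[
   \EE\automedpar{\max_{x\in\mathcal{X}}\autonorm{\nabla\Phi^{1/(2L)}(x)-\nabla\Phi_S^{1/(2L)}(x)}}\le 2\sqrt{2L}\,\sqrt{\EE\automedpar{\max_{(x,y)}\autoabs{F-F_S}}}=\tilde{\mathcal{O}}(d^{1/4}n^{-1/4}),
\]
which is the first claim. Setting the right‑hand side $\le\eps$ and solving for $n$ gives $n=\tilde{\mathcal{O}}(d\eps^{-4})=n_{\mathrm{NCC}}^*$, and the generalization‑error statement then follows immediately from \eqref{eq:relationship_2}, since the algorithm‑agnostic maximum upper‑bounds $\EE\autonorm{\nabla\Phi^{1/(2L)}(\mathcal{A}_x(S))-\nabla\Phi_S^{1/(2L)}(\mathcal{A}_x(S))}$ for every $\mathcal{A}$ with output in $\mathcal{X}$.

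The hard part — and the structural reason the NC‑C rate is $n^{-1/4}$ rather than the $n^{-1/2}$ of the NC‑SC case — is the very first step: without strong concavity $\Phi$ may be nondifferentiable and $\argmax_y F(x,y)$ is not a singleton, so $\nabla\Phi^{\lambda}$ can only be controlled through the \emph{value} gap $\max_z\autoabs{\Phi-\Phi_S}$ via the stability of proximal maps of weakly convex functions, and taking the square root of an $\tilde{\mathcal{O}}(n^{-1/2})$ value gap is exactly what degrades the rate; I do not see how to avoid this loss with only first‑order information on $\Phi$. The remaining care is in making the entropy‑integral argument quantitative with the right diameter dependence and in treating a possibly unbounded $f$ through its Lipschitz increments — routine, but this is where the logarithmic factors hidden in $\tilde{\mathcal{O}}$ and the dependence on $G,D_\mathcal{X},D_\mathcal{Y}$ get pinned down.
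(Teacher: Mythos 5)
Your argument is correct, and it takes a genuinely different route from the paper. Your key step is the deterministic, per-realization inequality $\max_{x}\autonorm{\nabla\Phi^{1/(2L)}(x)-\nabla\Phi_S^{1/(2L)}(x)}\le 2\sqrt{2L\,\max_z\autoabs{\Phi(z)-\Phi_S(z)}}$, obtained from $L$-strong convexity of the two prox subproblems; this reduces everything to the classical uniform law of large numbers for the \emph{values} of $F-F_S$ over $\mathcal{X}\times\mathcal{Y}$, and makes the $n^{-1/4}$ rate transparent as the square root of the $n^{-1/2}$ value rate. The paper instead regularizes the inner problem by $-\frac{\nu}{2}\autonorm{y}^2$ to land in the NC-SC/SC-SC regime (Lemma \ref{lemma:nc_sc_to_ncc} controls the bias $\cO(\sqrt{\nu D_\mathcal{Y}})$ of this surrogate), bounds $\autonorm{\prox_{\lambda\hat\Phi_S}(x_k)-\prox_{\lambda\hat\Phi}(x_k)}$ on a $\upsilon$-net via the SC-SC stability results of \citet{zhang2021generalization} plus McDiarmid, and balances $\nu\sim\sqrt{d/n}$. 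What your route buys is simplicity and modularity: no regularization, no stability machinery, and the covering argument is only needed at the level of the standard ULLN. What the paper's route buys is, first, a slightly sharper dimension dependence --- its net lives on $\mathcal{X}$ only, giving $d^{1/4}$ with $d=\dim\mathcal{X}$, whereas your chaining is over $\mathcal{X}\times\mathcal{Y}$ and yields $(d+d')^{1/4}$, which you cannot simply absorb into $\Tilde{\cO}$ if $d'\gg d$ --- and, second, an explicit handle on the $y$-error (via $\nu$) that is the natural place to attack if one hopes to beat $n^{-1/4}$, whereas the square-root loss in your value-based reduction appears intrinsic. Two small points to pin down in a full write-up: (i) the constrained first-order optimality condition for $\hat x,\hat x_S$ is what justifies the strong-convexity inequality at a boundary minimizer; (ii) the ULLN for $F-F_S$ needs an anchor since Assumption \ref{as:ncsc-ncc} bounds only gradients of $f$, not $f$ itself --- this is harmless because subtracting the per-sample constant $f(x_0,y_0;\xi)$ leaves all proximal points and Moreau-envelope gradients unchanged, as you essentially note.
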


We defer the detailed proof to Appendix \ref{apdx_sec:proof_NCC_unif_convergence}. To the best of our knowledge, this is the first algorithm-agnostic generalization error result in NC-C stochastic minimax optimization. Similar to the NC-SC setting,  Theorem \ref{thm:NC-C Agnostic} with error decomposition \eqref{eq:opt_decomposition} provides generalization guarantees for any algorithms that solve the NC-C empirical minimax problem, including the best-known Catalyst algorithm~\citep{yang2020catalyst}. More specifically, if an algorithm finds an $\eps$-stationarity point of the empirical minimax problem, with sample size $n = \Tilde{\cO}\autopar{d\eps^{-4}}$, the point is also an $\cO(\eps)$-stationarity point of the population minimax problem.

\paragraph{Proof Sketch}

The analysis of Theorem \ref{thm:NC-C Agnostic} builds up a link between NC-C and NC-SC settings and consists of three parts.  

\noindent\textbf{Step 1}:
By definition of the gradient of the Moreau envelope, it holds that 
$$
\|\nabla  \Phi_S^{\lambda}(x)-\nabla \Phi^{\lambda}(x)\| \leq \frac{1}{\lambda}\|\prox_{\lambda \Phi}(x) - \prox_{\lambda \Phi_S}(x)\|.
$$
We first use a  $\upsilon$-net $\{x_k\}_{k=1}^Q$~\citep{vapnik1999overview} to handle the dependence issue between $\tilde x^*\in \argmax_{x\in\mathcal{X}}\|\prox_{\lambda \Phi}(x) - \prox_{\lambda \Phi_S}(x)\|$ and $\Phi_S$.

\noindent\textbf{Step 2}:
We introduce the following $\ell_2$-regularized minimax problem:
\begin{equation*}
    \min_{x\in \mathcal{X}}\max_{y\in\mathcal{Y}}\ F(x,y)-\frac{\nu}{2}\|y\|^2.
\end{equation*}
Notice that this problem is NC-SC. We further build a connection between NC-C stochastic minimax optimization problems and the corresponding regularized NC-SC stochastic minimax optimization problems. Then we carefully choose the regularization parameter $\nu$ to derive the uniform convergence. 

The following lemma characterizes the distance between the proximal points of the primal function from the original NC-C problem and its regularized NC-SC problem. Note that the lemma may be of independent interest for the design and the analysis of gradient-based methods for NC-C problems.  
\begin{lemma}
\label{lemma:nc_sc_to_ncc}
For $\nu>0$, denote 
$
    \hat \Phi(x) = \max_{y\in\mathcal{Y}} F(x,y)-\frac{\nu}{2}\|y\|^2
$ as the primal function of the regularized NC-C problem. It holds for $\lambda\in(0, (L+\nu)^{-1})$ that 
$$
\|\prox_{\lambda \Phi}(x) - \prox_{\lambda \hat \Phi}(x)\|^2 \leq \frac{\nu D_\mathcal{Y}\lambda}{1-\lambda(L+\nu)}.
$$
\end{lemma}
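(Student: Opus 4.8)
The plan is to compare the two proximal-point subproblems directly, using strong convexity of the regularized objectives to turn the optimality of each proximal point into a quantitative bound on their distance. Fix $x$ and write $g(z) \triangleq \Phi(z) + \frac{1}{2\lambda}\|z-x\|^2$ and $\hat g(z) \triangleq \hat\Phi(z) + \frac{1}{2\lambda}\|z-x\|^2$, so that $p \triangleq \prox_{\lambda\Phi}(x) = \argmin_{z\in\mathcal{X}} g(z)$ and $\hat p \triangleq \prox_{\lambda\hat\Phi}(x) = \argmin_{z\in\mathcal{X}}\hat g(z)$. Since $\Phi$ is $L$-weakly convex and $\hat\Phi$ is $(L+\nu)$-weakly convex (adding the $\frac{\nu}{2}\|y\|^2$ penalty inside the max makes the inner problem $\nu$-strongly concave, hence $\hat\Phi$ is the primal function of an NC-SC problem and is $(L+\nu)$-weakly convex), both $g$ and $\hat g$ are strongly convex with modulus $\frac{1}{\lambda}-L$ and $\frac{1}{\lambda}-(L+\nu)$ respectively; the hypothesis $\lambda \in (0,(L+\nu)^{-1})$ guarantees both moduli are positive.

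The key steps, in order: First, establish the pointwise gap $0 \le \hat\Phi(z) - \Phi(z) \le \frac{\nu}{2}D_{\mathcal{Y}}$ for all $z\in\mathcal{X}$ — wait, the sign goes the other way: since $\hat\Phi(z)=\max_y F(z,y)-\frac{\nu}{2}\|y\|^2 \le \max_y F(z,y) = \Phi(z)$, we have $\Phi(z) - \frac{\nu}{2}D_{\mathcal{Y}} \le \hat\Phi(z) \le \Phi(z)$ using the domain bound $\|y\|^2 \le D_{\mathcal{Y}}$. So $|\Phi(z)-\hat\Phi(z)| \le \frac{\nu}{2}D_{\mathcal{Y}}$ and hence $|g(z)-\hat g(z)|\le \frac{\nu}{2}D_{\mathcal{Y}}$ uniformly in $z$. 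Second, use strong convexity of $\hat g$ at its minimizer $\hat p$: $\hat g(p) \ge \hat g(\hat p) + \frac{1}{2}(\frac{1}{\lambda}-(L+\nu))\|p-\hat p\|^2$. Third, chain this with the uniform closeness of $g$ and $\hat g$ and with optimality of $p$ for $g$:
\begin{align*}
\frac{1}{2}\left(\tfrac{1}{\lambda}-(L+\nu)\right)\|p-\hat p\|^2
&\le \hat g(p) - \hat g(\hat p)
\le \big(g(p) + \tfrac{\nu}{2}D_{\mathcal{Y}}\big) - \big(g(\hat p) - \tfrac{\nu}{2}D_{\mathcal{Y}}\big)\\
&= \big(g(p)-g(\hat p)\big) + \nu D_{\mathcal{Y}}
\le \nu D_{\mathcal{Y}},
\end{align*}
where the last inequality uses $g(p)\le g(\hat p)$ since $p$ minimizes $g$ over $\mathcal{X}$ and $\hat p\in\mathcal{X}$. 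Rearranging gives $\|p-\hat p\|^2 \le \dfrac{2\nu D_{\mathcal{Y}}}{\frac{1}{\lambda}-(L+\nu)} = \dfrac{2\nu D_{\mathcal{Y}}\lambda}{1-\lambda(L+\nu)}$.

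This is within a factor of $2$ of the claimed bound $\frac{\nu D_{\mathcal{Y}}\lambda}{1-\lambda(L+\nu)}$; to recover the stated constant one refines the middle step by applying strong convexity of \emph{both} $g$ and $\hat g$ and averaging, or more simply by using the sharper one-sided estimate $\hat g(p)-\hat g(\hat p) \le g(p)-g(\hat p) + \nu D_{\mathcal{Y}} \cdot \tfrac12$ — concretely, note $\hat g(p) - g(p) \le 0$ and $g(\hat p) - \hat g(\hat p) \le \frac{\nu}{2}D_{\mathcal{Y}}$, so $\hat g(p)-\hat g(\hat p) = (\hat g(p)-g(p)) + (g(p)-g(\hat p)) + (g(\hat p)-\hat g(\hat p)) \le 0 + 0 + \frac{\nu}{2}D_{\mathcal{Y}}$, which yields exactly $\|p-\hat p\|^2\le \frac{\nu D_{\mathcal{Y}}\lambda}{1-\lambda(L+\nu)}$. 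The main obstacle — really the only subtlety — is getting the weak-convexity constant of $\hat\Phi$ right and tracking signs carefully so the one-sided bounds $\hat\Phi\le\Phi$ and $\Phi-\hat\Phi\le\frac{\nu}{2}D_{\mathcal{Y}}$ are each used on the correct side; everything else is a routine strong-convexity-at-the-minimizer argument.
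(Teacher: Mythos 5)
Your proof is correct and takes essentially the same route as the paper's: both rest on strong convexity of $\hat\Phi(\cdot)+\frac{1}{2\lambda}\|\cdot-x\|^2$ at its minimizer, the optimality of $\prox_{\lambda\Phi}(x)$, and the two one-sided bounds $\hat\Phi\le\Phi$ and $\Phi-\hat\Phi\le\frac{\nu}{2}D_{\mathcal{Y}}$ (which the paper derives inline via the maximizers $y^*$ and $\hat y$ rather than stating as a uniform sandwich). Your refined version recovers the paper's constant exactly.
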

The above lemma implies that for small regularization parameter $\nu$, the difference between the proximal point of the primal function $\Phi$ of the NC-C problem and the primal function $\hat \Phi$ of the regularized NC-SC problem is small.

\noindent\textbf{Step 3}:
It remains to characterize the distance between $\prox_{\lambda \hat \Phi}(x)$ and $\prox_{\lambda \hat \Phi_S}(x)$, where $\hat \Phi_S$ is the primal function of the regularized empirical minimax problem. By definition of $\prox_{\lambda \hat \Phi}(x)$ and $\prox_{\lambda \hat \Phi_S}(x)$, the distance is equivalent to the difference between the optimal solutions on $x$ of a strongly-convex strongly-concave (SC-SC) population minimax problem and the counterpart empirical minimax problem. We utilize the existing stability-based results for SC-SC minimax optimization~\cite {zhang2021generalization} to the upper bound such a distance. We further show that $\|\prox_{\lambda \hat \Phi_S}(x) - \prox_{\lambda \hat \Phi}(x) \|-\EE\|\prox_{\lambda \hat \Phi_S}(x) - \prox_{\lambda \hat \Phi}(x) \|$ is a sub-Gaussian random variable and apply concentration inequality.\qed

\subsection{Comparing Uniform Convergence for Minimization, NC-SC, and NC-C Minimax Problems}
For general stochastic nonconvex optimization $\min_{x\in\mathcal{X}} \EE [f(x;\xi)]$, the sample complexity of achieving $\eps$-uniform convergence,
$$
\mathbb{E}\max_{x\in\mathcal{X}} \Big\| \frac{1}{n}\sum_{i=1}^n \nabla f(x;\xi_i) - \mathbb{E} \nabla f(x;\xi)\Big\|\leq \eps,
$$
between the gradient of the population problem and the empirical problem is $\tilde \cO(d\eps^{-2})$~\citep{davis2022graphical,mei2018landscape}. For nonconvex minimax optimization, if we care about the uniform convergence in terms of the gradient of $F$, i.e., 
$$
\EE\max_{x\in\mathcal{X},y\in\mathcal{Y}} \autonorm{ \frac{1}{n}\sum_{i=1}^n \nabla f(x,y;\xi_i) - \mathbb{E} \nabla f(x,y;\xi)},
$$
where $\nabla f$ denotes the full gradient with respect to $x$ and $y$, 
existing analysis in \citet{mei2018landscape} directly gives a $\tilde\cO(d\eps^{-2})$ sample complexity.
However, since we care about the gradient of the primal function, the analysis becomes more complicated, which we detail in the following. 
\begin{enumerate}[leftmargin = 2em]
\item  
    In the NC-SC setting, to establish uniform convergence, we bound
    \begin{equation*}
        \begin{split}
            \mathbb{E} \max_{x\in\mathcal{X}}\|\nabla \Phi_S(x)- \nabla \Phi(x)\|
            =
            \mathbb{E} \max_{x\in\mathcal{X}}\Big\|\frac{1}{n}\sum_{i=1}^n \nabla_x f(x,y_S^*(x);\xi_i)- \mathbb{E}\nabla_x f(x,y^*(x);\xi_i)\Big\|
        \end{split}
    \end{equation*}
    where 
    \begin{equation}
        y^*_S(x)\triangleq \argmax_{y\in\mathcal{Y}} F_S(x,y),
        \quad
        y^*(x)\triangleq \argmax_{y\in\mathcal{Y}} F(x,y)
    \end{equation}
    The primal function $\Phi_S$ is not in the form of averaging over $n$ samples, and thus existing analysis for the minimization problem is not directly applicable.
    In addition, as the optimal point $y^*_S(x)$ differs from $y^*(x)$, such difference brings in an additional error term. In the NC-SC case, the error is upper bounded by $\cO(n^{-1/2})$, which is the same scale as the error from establishing uniform convergence on $x$. Thus, the final uniform convergence bound established in Theorem \ref{thm:NC-SC Agnostic} is of the same order as that for minimization problem~\citep{mei2018landscape,davis2022graphical} except for an additional dependence on the condition number $\kappa$.
\item In the NC-C case, since there may exist multiple maximizers, we have 
\begin{equation}
    y^* \in \mathcal{Y}^*=\textrm{argmax}_{y\in\mathcal{Y}} \mathbb{E}f(x,y;\xi),
    \quad
    y_S^* \in \mathcal{Y}_S^*=\textrm{argmax}_{y\in\mathcal{Y}} \frac{1}{n}\sum_{i=1}^n f(x,y;\xi_i).
\end{equation}
Thus, the distance between $y^*$ and $y_S^*$ may not be well-defined.
Instead, we bound the distance between $\hat y_S^*(x)\triangleq \argmax_{y\in\mathcal{Y}} F_S(x,y)-\frac{\nu}{2}\|y\|^2$ and $\hat y^*(x)\triangleq \argmax_{y\in\mathcal{Y}} F(x,y)-\frac{\nu}{2}\|y\|^2$ for a small regularization parameter $\nu=\cO(n^{-1/2})$. The distance can be controlled by $\cO(n^{-1/4})$. Thus, the sample complexity for achieving $\eps$-uniform convergence for the NC-C case is large than that of the NC-SC case. We leave it for future investigation to see if one could achieve smaller sample complexity in the NC-C case via a better characterization of the extra error brought in by $y$ in the NC-C setting.
\end{enumerate}

\section{Algorithmic Stability and Generalization Bounds}
Notice that the uniform convergence in Theorems \ref{thm:NC-SC Agnostic} and \ref{thm:NC-C Agnostic} has a dependence on the dimension $d$. It becomes less meaningful for high-dimensional problems \citep{lei2022stability,feldman2019high}. It remains interesting to build dimension-independent generalization results utilizing the special structure of the algorithms. In this section, we investigate the generalization performance of specific algorithms for nonconvex stochastic minimax optimization problems utilizing stability arguments.

\subsection{Stability and Generalization}
Existing literature on stability arguments in minimax optimization often rely on stability notions based on function values \citep{farnia2021train,lei2021stability,zhang2021generalization}. In order to derive bounds on the generalization in terms of primal stationarity, we introduce the following novel notions of uniform stability on gradients of the primal function, called \textit{uniform primal stability}.
\begin{definition}[Uniform Primal Stability]
    \label{defn:uniform stability}    
    A randomized algorithm $\mathcal{A}$ is $\delta$-uniformly primal stable if for every two neighboring dataset $S, S'$ which differ in only one sample, for every $\xi\in\Xi$ we have
    \begin{equation}
        \begin{split}
            \sup_\xi\mathbb{E}_\mathcal{A}
            \autonorm{\nabla f\autopar{\mathcal{A}_x(S),y^*\autopar{\mathcal{A}_x(S)};\xi}
            -
            \nabla f\autopar{\mathcal{A}_x(S'), y^*\autopar{\mathcal{A}_x(S')};\xi}}^2\leq\delta^2,
        \end{split}
    \end{equation}
    where $\nabla f=(\nabla_x f, \nabla_y f)^\top$ denotes the full gradient.
\end{definition}

The following theorem connects stability and generalization in minimax optimization problems. We defer the proof to Appendix \ref{apdx:NCSC stability gen}.

\begin{theorem}[Stability and Generalization, NC-SC]
    \label{thm:stability and generalization}
    Let $\mathcal{A}$ be a $\delta$-uniformly primal stable algorithm. For any function $f$ satisfying Assumption \ref{as:ncsc-ncc} with $\mu>0$, we have
    \begin{equation}
        \begin{split}
            \mathbb{E}_{\mathcal{A},S}\autonorm{\nabla\Phi\autopar{\mathcal{A}_x(S)}-\nabla\Phi_S\autopar{\mathcal{A}_x(S)}}
            \leq
            (1+\kappa)\autopar{4\delta+\frac{ G}{\sqrt{n}}}.
        \end{split}
    \end{equation}
\end{theorem}
To the best of our knowledge, this is the first result that connects uniformly stable algorithms and generalization errors in minimax optimization measured by primal stationarity. As a comparison, in the minimization case, \citet[Theorem 2]{lei2022stability} proved that the gap between the empirical and population gradients is $\mathcal{O}\autopar{\delta+\frac{1}{\sqrt{n}}}$, while Theorem \ref{thm:stability and generalization} has an additional dependence on the condition number $\kappa$ that comes from the minimax structure.

In the NC-C case, the uniform primal stability is less meaningful as $y^*(\cdot)$ is not well-defined. Instead, we use the following notion of \textit{uniform primal argument stability}.

\begin{definition}[Uniform Primal Argument Stability]
    A randomized algorithm $\mathcal{A}$ is $\delta$-uniformly primal argument stable if for every two dataset $S, S'$ which differ in only one sample,
    \begin{equation*}
        \begin{split}
            \mathbb{E}_\mathcal{A}\autonorm{\mathcal{A}_x(S)-\mathcal{A}_x(S')}^2\leq\delta^2.
        \end{split}
    \end{equation*}
\end{definition}

The following theorem connects argument stability and generalization in NC-C case, measured by primal Moreau envelope stationarity.
\begin{theorem}[Stability and Generalization, NC-C]
    \label{thm:stability and generalization NCC}
    Let $\mathcal{A}$ be a $\delta$-uniformly primal argument stable algorithm. For any function $f$ satisfying Assumption \ref{as:ncsc-ncc} with $\mu=0$, we have
    \begin{equation}
        \begin{split}
            \mathbb{E}_{\mathcal{A},S}\autonorm{\nabla\Phi^{1/(2L)}\autopar{\mathcal{A}_x(S)}-\nabla\Phi_S^{1/(2L)}\autopar{\mathcal{A}_x(S)}}
            \leq 
            \mathcal{O}\autopar{\delta^{1/6}+n^{-1/8}}.
        \end{split}
    \end{equation}
\end{theorem}
We defer the proof to Appendix \ref{apdx:NCC stability gen}. Note that the analysis also leverages the idea of adding regularization to create a surrogate NC-SC problem, as we did in Section \ref{sec:uniform_NCC}. This result yields the relationship between stability and generalization in NC-C problems measured by primal stationarity. Different from the minimization case, the perturbation on the dataset incurs errors on both the function gradients and the dual maximizers, which requires more careful analysis to derive the final generalization bound. With Theorems \ref{thm:stability and generalization} and \ref{thm:stability and generalization NCC}, to obtain the generalization bounds of algorithms designed for NC-SC and NC-C minimax optimization problems, it suffices to derive the stability of specific algorithms. 

\subsection{Generalization of SGDA}
\label{sec:gen_SGDA}
In this subsection, we study the generalization bounds of the classical \textit{stochastic gradient descent ascent} (SGDA) for minimax optimization problems in both NC-SC and NC-C cases. Recall the procedures of SGDA: in each iteration $t$,
\begin{equation}
    \begin{cases}
        x_{t+1}=\proj_\mathcal{X}\autopar{x_t-\alpha_t^x\nabla_x f(x_t, y_t; \xi_t)},\\
        y_{t+1}=\proj_\mathcal{Y}\autopar{y_t+\alpha_t^y\nabla_y f(x_t, y_t; \xi_t)},
    \end{cases}
\end{equation}
where $\autopar{\alpha_t^x,\alpha_t^y}$ are the stepsizes. \citet{farnia2021train} investigated the $\delta$-stability of SGDA. Together with Theorems \ref{thm:stability and generalization} and \ref{thm:stability and generalization NCC}, we have the following generalization errors in NC-SC and NC-C cases, respectively.

\begin{corollary}[Generalization of SGDA, NC-SC]
    \label{thm:NC-SC SGDA Specific}
    Assume the function $f$ is NC-SC as defined in Assumption \ref{as:ncsc-ncc} with $\mu>0$, then if we run SGDA for $T$ iterations with stepsize $\autopar{\alpha_t^x,\alpha_t^y}=\autopar{\frac{c}{t},\frac{cr^2}{t}}$ for some constant $c>0$ and $1\leq r<\kappa$, let $\zeta_1=(cL(r+1)+1)^{-1}$, we have
    \begin{equation}
        \begin{split}
            \EE_{S,\mathcal{A}}\autonorm{\nabla\Phi\autopar{\mathcal{A}_x(S)}-\nabla\Phi_S\autopar{\mathcal{A}_x(S)}} 
            \leq
            \mathcal{O}\autopar{\kappa^{1+\zeta_1}
            \autopar{\frac{T^{1-\zeta_1}}{n}+\frac{1}{\sqrt{n}}}},
        \end{split}
    \end{equation}
    where $(\mathcal{A}_x(S), \mathcal{A}_y(S))=(x_T, y_T)$ is the output of SGDA.
\end{corollary}

\begin{corollary}[Generalization of SGDA, NC-C]
    \label{thm:NC-C SGDA Specific}
    Assume the function $f$ is NC-C as defined in Assumption \ref{as:ncsc-ncc} with $\mu=0$, then if we run SGDA for $T$ iterations with stepsize $\max\{\alpha_t^x,\alpha_t^y\}\leq\frac{c}{t}$ for some constant $c>0$, let $\zeta_2=(cL+1)^{-1}$ then we have
    \begin{equation}
        \begin{split}
            \EE_{S,\mathcal{A}}\autonorm{\nabla\Phi^{1/(2L)}\autopar{\mathcal{A}_x(S)}-\nabla\Phi_S^{1/(2L)}\autopar{\mathcal{A}_x(S)}}
            \leq
            \mathcal{O}\autopar{\autopar{\frac{T^{1-\zeta_2}}{n}}^{1/6} + \autopar{\frac{1}{n}}^{1/8}},
        \end{split}
    \end{equation}
    where $(\mathcal{A}_x(S), \mathcal{A}_y(S))=(x_T, y_T)$ is the output of SGDA.
\end{corollary}

The proof relies on the stability results in \citet{farnia2021train}, which we defer to Appendix \ref{apdx:NCSC SGDA}. Compared to the generalization bounds in Theorems \ref{thm:NC-SC Agnostic} and \ref{thm:NC-C Agnostic} that use uniform convergence, the generalization bounds of SGDA avoid the dependence on the dimension $d$. However, the dependence on $n$ of generalization bounds of SGDA becomes worse compared to uniform convergence in the NC-C setting. 

\subsection{Generalization of Sampling-determined Algorithms}
\label{sec:gen_SDA}
In this subsection, we consider an extension of  \emph{sampling-determined algorithm (SDA)} class proposed in \citet{lei2022stability}. For completeness, we present its definition below.
\begin{definition}[Sampling-determined Algorithm]
\label{defn:sampling-determined_alg}
    Let $\mathcal{A}$ be an algorithm that randomly chooses an index sequence $I(\mathcal{A})=\{i_t\}$ from the dataset to build stochastic gradients. 
    We say $\mathcal{A}$ is \emph{sampling-determined} if its output is independent of the sample $\xi_i$ for any $i\notin I(\mathcal{A})$.
\end{definition}

SDA covers a wide range of algorithms, including classical SGDA, stochastic extragradient, and some adaptive variants of SGDA~\citep{yang2022nest} in minimax optimization literature. \citet{lei2022stability} derives $\delta$-stability of SDA leveraging its sampling-determined property. Following their techniques and combining with Theorems \ref{thm:stability and generalization} and \ref{thm:stability and generalization NCC}, we obtain the following generalization bounds for SDA in both NC-SC and NC-C scenarios.

\begin{corollary}[Generalization of SDA, NC-SC]
    \label{thm:NC-SC SDA}
    Assume the function $f$ is NC-SC as defined in Assumption \ref{as:ncsc-ncc} with $\mu>0$. If we run a SDA algorithm $\mathcal{A}$ for $T$ iterations,  we have
    \begin{equation}
        \begin{split}
            \EE_{S,\mathcal{A}}\autonorm{\nabla\Phi\autopar{\mathcal{A}_x(S)}-\nabla\Phi_S\autopar{\mathcal{A}_x(S)}}
            \leq
            \mathcal{O}\autopar{\kappa \autopar{\sqrt{\frac{T}{n}}+\frac{1}{\sqrt{n}}}}
        \end{split}
    \end{equation}
\end{corollary}
Compared with Corollary \ref{thm:NC-SC SGDA Specific}, the generalization bound of SDA does not require specific stepsizes and applies to a wider class of algorithms. 
\begin{corollary}[Generalization of SDA, NC-C]
    \label{thm:NC-C SDA}
    Assume the function $f$ is NC-C as defined in Assumption \ref{as:ncsc-ncc} with $\mu=0$. If we run a SDA algorithm $\mathcal{A}$ for $T$ iterations, we have
    \begin{equation}
        \begin{split}
            \EE_{S,\mathcal{A}}\autonorm{\nabla\Phi^{1/(2L)}\autopar{\mathcal{A}_x(S)}-\nabla\Phi_S^{1/(2L)}\autopar{\mathcal{A}_x(S)}}
            \leq
            \mathcal{O}\autopar{\autopar{\frac{T}{n}}^{1/12} +
            \autopar{\frac{1}{n}}^{1/8}}.
        \end{split}
    \end{equation}
\end{corollary}
Compared with Corollary \ref{thm:NC-C SGDA Specific}, the generalization bound of SDA algorithm has a worse dependence on sample size $n$. Due to the $T/n$ term in Corollaries \ref{thm:NC-SC SDA} and \ref{thm:NC-C SDA}, to achieve good generalization, SDA should have less than one pass of the dataset. On the other hand, SGDA may use the stepsize to control $\zeta$ and can do multiple pass over the dataset. 

\section{Conclusion and Future Directions}
\label{sec:conclusion}
We take an initial step toward understanding the generalization performances of NC-SC and NC-C minimax problems measured by the first-order stationarity from both uniform convergence and stability argument perspectives. 
Several future directions are worth further investigation. It remains interesting to see whether we can improve the uniform convergence and stability results under the NC-C setting, particularly the dependence on sample size $n$. Another possible direction is to investigate the generalization performances for specific applications. Some studies in stochastic minimization show that specific machine learning models (e.g., generalized linear models) enjoy dimension-free uniform convergence bounds \citep{amir2022thinking,davis2022graphical}. It would be interesting to see if such dimension-free uniform convergence property also holds for some minimax applications.

\clearpage

\bibliographystyle{plainnat}
\bibliography{ref}

\clearpage
\appendix
\section{Additional Definitions and Tools}
\label{apdx_sec:notations}
For convenience, we summarize the notations commonly used throughout the paper.
\begin{itemize}[leftmargin = 2em]
    \item Population minimax problem and its primal function\footnote{Another commonly used convergence criterion in minimax optimization is the \textit{first-order stationarity of $F$}, i.e., $\autonorm{\nabla_x F}\leq\epsilon$ and $\autonorm{\nabla_y F}\leq\epsilon$ (or its corresponding gradient mapping) \citep{lin2020gradient,xu2020unified}. We refer readers to \cite{lin2020gradient,yang2022faster} for a thorough comparison of these two measurements. In this paper, we always stick to the convergence measured by the stationarity of the primal function. }
    \begin{equation*}
        \begin{split}
            & F(x,y)\triangleq \E_\xi f(x,y;\xi),\quad
            \Phi(x)\triangleq \max_{y\in\mathcal{Y}} F(x,y),\quad
            y^*(x)\triangleq \argmax_{y\in\mathcal{Y}} F(x,y).
        \end{split}
    \end{equation*}
    \item Empirical minimax problem and its primal function
    \begin{equation*}
        F_S(x,y)\triangleq \frac{1}{n}\sum_{i=1}^nf(x,y;\xi_i),\quad
        \Phi_S(x)\triangleq \max_{y\in\mathcal{Y}} F_S(x,y),\quad
        y_S^*(x)\triangleq \argmax_{y\in\mathcal{Y}} F_S(x,y).
    \end{equation*}
    \item Moreau envelope and corresponding proximal point:
    \begin{equation*}
        \begin{split}
            & \Phi^\lambda(x)\triangleq\min_{z\in\mathcal{X}}\autobigpar{\Phi(z)+\frac{1}{2\lambda}\autonorm{z-x}^2},\quad
            \prox_{\lambda \Phi}(x)\triangleq\argmin_{z\in\mathcal{X}}\autobigpar{\Phi(z)+\frac{1}{2\lambda}\autonorm{z-x}^2},\\
            & \Phi_S^\lambda(x)\triangleq\min_{z\in\mathcal{X}}\autobigpar{\Phi_S(z)+\frac{1}{2\lambda}\autonorm{z-x}^2},\quad
            \prox_{\lambda \Phi_S}(x)\triangleq\argmin_{z\in\mathcal{X}}\autobigpar{\Phi_S(z)+\frac{1}{2\lambda}\autonorm{z-x}^2}. 
        \end{split}
    \end{equation*}
    \item $\mathcal{G}_\Phi(x)$: gradient mapping (generalized gradient) of a function $\Phi$.
    \item $\autonorm{\cdot}$: $\ell_2$-norm.
    \item $\nabla f=\autopar{\nabla_x f, \nabla_y f}$: the gradient of a function $f$.
    \item $\proj_\mathcal{X}(x')$: the projection operator. 
    \item $\mathcal{A}(S)\triangleq\autopar{\mathcal{A}_x(S),\mathcal{A}_y(S)}$: the output of an algorithm $\mathcal{A}$ on the empirical minimax problem \eqref{eq:FS_problem} with dataset $S$.
    \item NC~/~WC: nonconvex, weakly convex.
    \item NC-SC~/~NC-C: nonconvex-(strongly)-concave.
    \item SOTA: state-of-the-art.
    \item $d$: dimension number of $\mathcal{X}$.
    \item $\kappa$: condition number $\frac{L}{\mu}$, $L$: Lipschitz smoothness parameter, $\mu$: strong concavity parameter.
    \item $\tilde{O}(\cdot)$ hides poly-logarithmic factors.
    \item $f=\Omega\autopar{g}$ if $f(x)\geq cg(x)$ for some $c>0$ and nonnegative functions $f$ and $g$.
    \item We say a function $g:\mathcal{X}\rightarrow\mathbb{R}$ is convex if $\forall\ x_1, x_2\in\mathcal{X}$ and $p\in[0,1]$, we have $g(px_1+(1-p)x_2)\geq pg(x_1)+(1-p)g(x_2)$.
    \item A function $h:\mathcal{X}\rightarrow\mathbb{R}$ is $L$-smooth\footnote{Here the smoothness definition for single-variable functions is subtly different from that of two-variable functions in Definition \ref{defn:L_smooth_minimax}, so we list it here for completeness.} if $h$ is continuously differentiable in $\mathcal{X}$ and there exists a constant $L>0$ such that $\autonorm{\nabla h(x_1)-\nabla h(x_2)}\leq L\|x_1-x_2\|$ holds  for any $x_1, x_2$.
\end{itemize}

For completeness, we introduce the definition of a sub-Gaussian random variable and related lemma, which are important tools in the analysis.

\begin{definition}[Sub-Gaussian Random Variable]
    A random variable $\eta$ is a zero-mean sub-Gaussian random variable with variance proxy $\sigma_\eta^2$ if $\EE \eta = 0$ and either of the following two conditions hold:
    \begin{align*}
    (a) ~\EE \automedpar{\exp(s \eta)}\leq \exp\autopar{\frac{\sigma_\eta^2 s^2}{2}} \text{ for any $s\in\RR$;}  \quad (b)~ \mathbb{P}(|\eta|\geq t)\leq 2\exp\autopar{-\frac{t^2}{2\sigma_\eta^2}} \text{ for any $t>0$.}
    \end{align*}
\end{definition}
We use the following McDiarmid's inequality to show that a random variable is sub-Gaussian.
\begin{lemma}[McDiarmid’s inequality]
    \label{lm:McDiarmid_inequality}
Let $\eta_1,\ldots,\eta_n\in\RR$ be independent random variables. Let $h:\RR^n\rightarrow\RR$ be any function     with the $(c_1,\ldots,c_n)$-bounded differences property: for every $i=1,\ldots,n$ and every $(\eta_1,\ldots,\eta_n)$, and $(\eta_1^\prime,\dots,\eta_n^\prime)$ that differ only in the $i$-th coordinate ($\eta_j= \eta_j^\prime$ for all $j\not=i$), we have 
\begin{equation*}
\autoabs{h(\eta_1,\ldots,\eta_n) - h(\eta_1^\prime,\dots,\eta_n^\prime)}\leq c_i.
\end{equation*}
For any $t>0$, it holds that 
\begin{equation*}
\mathbb{P}\autopar{\autoabs{h(\eta_1,\ldots,\eta_n)-\EE h(\eta_1,\ldots,\eta_n)} \geq t}\leq 2\exp\autopar{-\frac{2t^2}{\sum_{i=1}^n c_i^2}}.
\end{equation*}
\end{lemma}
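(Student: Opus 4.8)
The plan is to prove this bounded differences (McDiarmid) inequality by the classical martingale route, i.e., Azuma--Hoeffding applied to the Doob martingale of $h$. Let $\mathcal{F}_0\subseteq\mathcal{F}_1\subseteq\cdots\subseteq\mathcal{F}_n$ be the filtration generated by the coordinates, $\mathcal{F}_i\triangleq\sigma(\eta_1,\ldots,\eta_i)$, and set $V_i\triangleq\EE\automedpar{h(\eta_1,\ldots,\eta_n)\mid\mathcal{F}_i}$, so that $V_0=\EE h(\eta_1,\ldots,\eta_n)$, $V_n=h(\eta_1,\ldots,\eta_n)$, and the deviation we must control is $\sum_{i=1}^n D_i$ with $D_i\triangleq V_i-V_{i-1}$ a martingale difference sequence.

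The crux is to show that, conditionally on $\mathcal{F}_{i-1}$, each increment $D_i$ takes values in an interval whose length is at most the deterministic constant $c_i$ and whose endpoints are $\mathcal{F}_{i-1}$-measurable. Using independence of the $\eta_j$'s, one rewrites $V_i$ as $g_i(\eta_i)$ and $V_{i-1}$ as $\EE\automedpar{g_i(\eta_i)\mid\mathcal{F}_{i-1}}$, where $g_i(x)\triangleq\EE\automedpar{h(\eta_1,\ldots,\eta_{i-1},x,\eta_{i+1},\ldots,\eta_n)\mid\mathcal{F}_{i-1}}$; then $D_i$ lies between $\inf_x g_i(x)-V_{i-1}$ and $\sup_x g_i(x)-V_{i-1}$, and the $(c_1,\ldots,c_n)$-bounded differences hypothesis forces $\sup_x g_i(x)-\inf_x g_i(x)\le c_i$. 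This bookkeeping — justifying the passage from conditioning on $\mathcal{F}_{i-1}$ to an integral over the remaining coordinates with $\eta_1,\ldots,\eta_{i-1}$ frozen, and controlling the sup/inf measurably — is the part I expect to require the most care; the rest is routine, and one could alternatively quote Azuma--Hoeffding as a black box and only verify this bounded-increment hypothesis.

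Finally I would run the Chernoff bound. For $s>0$, peeling off the last increment and using the tower property together with the fact that $\sum_{i=1}^{n-1}D_i$ is $\mathcal{F}_{n-1}$-measurable, $\EE\exp\autopar{s\sum_{i=1}^n D_i}=\EE\automedpar{\exp\autopar{s\sum_{i=1}^{n-1}D_i}\EE\automedpar{\exp(sD_n)\mid\mathcal{F}_{n-1}}}$. Since $\EE\automedpar{D_n\mid\mathcal{F}_{n-1}}=0$ and $D_n$ conditionally lives in an interval of length $\le c_n$, Hoeffding's lemma gives $\EE\automedpar{\exp(sD_n)\mid\mathcal{F}_{n-1}}\le\exp(s^2c_n^2/8)$, which is a deterministic factor; iterating over all $n$ coordinates yields $\EE\exp\autopar{s(V_n-V_0)}\le\exp\autopar{\tfrac{s^2}{8}\sum_{i=1}^n c_i^2}$. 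Markov's inequality then gives $\mathbb{P}(V_n-V_0\ge t)\le\exp\autopar{-st+\tfrac{s^2}{8}\sum_{i=1}^n c_i^2}$, and choosing $s=4t/\sum_{i=1}^n c_i^2$ produces $\exp\autopar{-2t^2/\sum_{i=1}^n c_i^2}$. Applying the same argument to $-h$ bounds the lower tail identically, and a union bound over the two tails produces the factor $2$ in the statement.
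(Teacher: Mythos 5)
Your proposal is the classical and correct proof of McDiarmid's inequality via the Doob martingale, bounded conditional increments, Hoeffding's lemma, the Chernoff bound with $s=4t/\sum_{i=1}^n c_i^2$, and a two-sided union bound; the arithmetic checks out. The paper itself states this lemma as a standard tool and gives no proof, so there is nothing to compare against: your argument is exactly the textbook derivation one would cite here, and the only step deserving the care you already flagged is the use of independence to identify the conditional expectations $V_i$ and $V_{i-1}$ with integrals over the remaining coordinates so that the bounded-differences hypothesis yields $\sup_x g_i(x)-\inf_x g_i(x)\le c_i$.
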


\begin{lemma}[Properties of $\Phi$ and $\Phi^\lambda$, Full Version]
    \label{lm:properties_Phi_envelope_full}
    In the NC-SC setting ($\mu>0$), both $\Phi(x)$ and $\Phi_S(x)$ are $\tilde{L}\triangleq L(1+\kappa)$-smooth with the condition number $\kappa\triangleq L/\mu$, both $y^*(x)$ and $y_S^*(x)$ are $\kappa$-Lipschitz continuous and $\nabla\Phi(x)   = \nabla_x F(x,y^*(x)), \nabla\Phi_S(x) = \nabla_xF_S\autopar{x,y_S^*(x)}$. In the NC-C setting ($\mu=0$), the primal function $\Phi$ is $L$-weakly convex, and its its Moreau envelope $\Phi^{\lambda}(x)$ is differentiable, Lipschitz smooth, also 
    \begin{equation}
        \nabla\Phi^{\lambda}(x)=\lambda^{-1}(x-\hat{x}),
        \quad
        \autonorm{\nabla\Phi^{\lambda}(x)}\geq\dist\autopar{0,\partial\Phi(\hat{x})},
    \end{equation}
    where $\hat{x}=\prox_{\lambda\Phi}(x)$ and $0<\lambda<1/L$.
\end{lemma}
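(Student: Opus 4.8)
The plan is to handle the NC-SC case ($\mu>0$) and the NC-C case ($\mu=0$) separately, in each case reducing the claim to standard facts about parametric maximization (Danskin) and Moreau envelopes of weakly convex functions. A preliminary remark makes everything for $\Phi_S$ free: $F_S$ is a finite average of functions each satisfying Assumption \ref{as:ncsc-ncc} with the same constants $L,\mu$, hence $F_S$ is itself $L$-smooth jointly and $\mu$-strongly concave in $y$, so every statement proved for $\Phi,y^*$ transfers verbatim to $\Phi_S,y_S^*$.

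For the NC-SC case I would proceed in three steps. \textbf{Step 1 (Lipschitz continuity of $y^*$).} Since $F(x,\cdot)$ is $\mu$-strongly concave on the closed convex set $\mathcal{Y}$, $y^*(x)$ is well defined and single-valued and is characterized by the variational inequality $\langle \nabla_y F(x,y^*(x)),\, y-y^*(x)\rangle \le 0$ for all $y\in\mathcal{Y}$. Writing this at $x_1$ and at $x_2$, adding the two inequalities, and splitting $\nabla_y F(x_1,y^*(x_1))-\nabla_y F(x_2,y^*(x_2))$ into a ``difference in $y$'' (bounded below by $\mu\|y^*(x_1)-y^*(x_2)\|^2$ via strong concavity) and a ``difference in $x$'' (bounded above by $L\|x_1-x_2\|\,\|y^*(x_1)-y^*(x_2)\|$ via joint $L$-smoothness and Cauchy--Schwarz) yields $\mu\|y^*(x_1)-y^*(x_2)\|^2 \le L\|x_1-x_2\|\,\|y^*(x_1)-y^*(x_2)\|$, i.e.\ $y^*$ is $\kappa$-Lipschitz. \textbf{Step 2 (Danskin).} Uniqueness of $y^*(x)$ and joint continuity of $\nabla_x F$ give, by Danskin's theorem, that $\Phi$ is differentiable with $\nabla\Phi(x)=\nabla_x F(x,y^*(x))$. \textbf{Step 3 ($\tilde L$-smoothness).} Combining the previous two steps, $\|\nabla\Phi(x_1)-\nabla\Phi(x_2)\| = \|\nabla_x F(x_1,y^*(x_1))-\nabla_x F(x_2,y^*(x_2))\| \le L(\|x_1-x_2\|+\|y^*(x_1)-y^*(x_2)\|) \le L(1+\kappa)\|x_1-x_2\|$, which is exactly $\tilde L$-smoothness.

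For the NC-C case, joint $L$-smoothness of $f(\cdot,\cdot;\xi)$ implies $F(\cdot,y)$ is $L$-weakly convex for each fixed $y$, so $\Phi(x)+\tfrac{L}{2}\|x\|^2 = \max_{y\in\mathcal{Y}}\bigl(F(x,y)+\tfrac{L}{2}\|x\|^2\bigr)$ is a pointwise supremum of convex functions and hence convex; thus $\Phi$ is $L$-weakly convex, as in \cite[Lemma 3]{thekumparampil2019efficient}. For $0<\lambda<1/L$ the inner objective $z\mapsto \Phi(z)+\tfrac{1}{2\lambda}\|z-x\|^2$ is $(1/\lambda-L)$-strongly convex on $\mathcal{X}$, so $\prox_{\lambda\Phi}(x)$ is its unique minimizer, $\Phi^\lambda$ is finite-valued, and by the standard Moreau-envelope theory for weakly convex functions \citep{davis2019stochastic} it is continuously differentiable and Lipschitz smooth with $\nabla\Phi^\lambda(x)=\lambda^{-1}(x-\prox_{\lambda\Phi}(x))$. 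Finally, the first-order optimality condition at $\hat x=\prox_{\lambda\Phi}(x)$ — the quadratic term being smooth, its gradient $\lambda^{-1}(\hat x-x)$ must lie in $-\partial\Phi(\hat x)$ after accounting for the constraint $\mathcal{X}$ — gives $\nabla\Phi^\lambda(x)=\lambda^{-1}(x-\hat x)\in\partial\Phi(\hat x)$, hence $\dist(0,\partial\Phi(\hat x)) \le \|\nabla\Phi^\lambda(x)\|$.

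The only genuine subtlety — more bookkeeping than obstacle — is the presence of the constraint set $\mathcal{X}$ in the NC-C part: the optimality condition at $\prox_{\lambda\Phi}(x)$ involves the normal cone of $\mathcal{X}$, so one must fix the convention that $\partial\Phi$ denotes the (Fréchet) subdifferential of $\Phi$ taken over $\mathcal{X}$ (equivalently, of $\Phi$ plus the indicator of $\mathcal{X}$), which is precisely the convention of \citet{davis2019stochastic,thekumparampil2019efficient} under which the inclusion $\nabla\Phi^\lambda(x)\in\partial\Phi(\hat x)$ is valid. Everything else is routine: each claim follows either from Danskin's theorem or from cited Moreau-envelope results, and the proof amounts to assembling these and tracking the constants $\tilde L=L(1+\kappa)$ and $\kappa=L/\mu$.
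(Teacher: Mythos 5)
Your proposal is correct, and it follows exactly the route the paper intends: the paper does not prove this lemma itself but recalls it from \citet{davis2019stochastic,thekumparampil2019efficient,lin2020gradient}, and your three NC-SC steps (variational-inequality argument for the $\kappa$-Lipschitzness of $y^*$, Danskin, then combining with joint $L$-smoothness to get $\tilde L=L(1+\kappa)$) together with the standard weakly-convex Moreau-envelope facts are precisely the standard derivations behind those citations. Your remark about interpreting $\partial\Phi$ as the subdifferential of $\Phi+\iota_{\mathcal{X}}$ so that $\nabla\Phi^{\lambda}(x)\in\partial\Phi(\hat x)$ is the right convention and resolves the only real subtlety.
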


For completeness, we formally define the stationary point here. Note that the generalized gradient is defined on $\mathcal{X}$ while the Moreau envelope is defined on the whole domain $\mathbb{R}^d$.

\begin{definition}[Stationary Point]
\label{defn:stationary_point}
    Let $\epsilon>0$, for an $\tilde{L}$-smooth function $\Phi:\mathcal{X}\rightarrow\mathbb{R}$, we call a point $x$ an $\epsilon$-stationary point of $\Phi$ if $\autonorm{\mathcal{G}_\Phi(x)}\leq\epsilon$, where $\mathcal{G}_\Phi$ is the gradient mapping (or generalized gradient) defined as $\mathcal{G}_\Phi(x)\triangleq \tilde{L}\autopar{x-\proj_\mathcal{X}\autopar{x-(1/\tilde{L})\nabla\Phi(x)}}$; for an $L$-weakly convex function $\Phi$, we say a point $x$ an $\epsilon$-(nearly)-stationary point of $\Phi$ if $\autonorm{\nabla\Phi^{1/(2L)}(x)}\leq\epsilon$.
\end{definition}

\section{Proof of Theorem \ref{thm:NC-SC Agnostic}}
\label{apdx_sec:proof_NCSC_unif_convergence}
\begin{proof}
To derive the desired generalization bounds, we take an $\upsilon$-net $\{x_k\}_{k=1}^Q$ on $\mathcal{X}$ so that there exists a $k\in\autobigpar{1,\cdots,Q}$ for any $x\in\mathcal{X}$ such that $\|x-x_k\|\leq \upsilon$. Note that such $\upsilon$-net exists with  $Q=\cO(\upsilon^{-d})$ for compact $\mathcal{X}$~\citep{kleywegt2002sample}. Utilizing the definition of the $\upsilon$-net, we have
\begin{equation}
    \begin{split}
        &
    \EE \max_{x\in\mathcal{X}}\autonorm{\nabla \Phi_S(x)- \nabla\Phi(x)} \\
\leq\ &
    \EE \max_{x\in\mathcal{X}} \automedpar{\autonorm{\nabla \Phi_S(x)- \nabla\Phi_S(x_k)} + \autonorm{\nabla \Phi_S(x_k)- \nabla\Phi(x_k)} + \autonorm{\nabla \Phi(x_k)- \nabla\Phi(x)}}\\
\leq\ & 
    \EE \max_{k\in[Q]} \|\nabla \Phi_S(x_k)- \nabla\Phi(x_k)\| + 2L(1+\kappa) \upsilon,
    \end{split}
\end{equation}
where the last inequality holds as $\Phi$ and $\Phi_S$ are $L(1+\kappa)$-smooth following Lemma \ref{lm:properties_Phi_envelope}. For any $s>0$, we have 
\begin{equation}
\label{eq:u-net-transformation}
\begin{aligned}
&
    \exp\Big( s\EE \max_{x\in\mathcal{X}}\|\nabla \Phi_S(x)- \nabla\Phi(x)\|\Big)\\
\leq\ &
    \exp\Big( s\Big[\EE \max_{k\in[Q]} \|\nabla \Phi_S(x_k)- \nabla\Phi(x_k)\| + 2L(1+\kappa) \upsilon \Big]\Big) \\
\leq\ &
    \EE \max_{k\in[Q]} \exp\Big( s\Big[ \|\nabla \Phi_S(x_k)- \nabla\Phi(x_k)\| + 2L(1+\kappa) \upsilon \Big]\Big) \\
\leq\ &
    \EE \sum_{k\in[Q]} \exp\Big( s\Big[ \|\nabla \Phi_S(x_k)- \nabla\Phi(x_k)\| + 2L(1+\kappa) \upsilon \Big]\Big) \\
=\ &
    \sum_{k\in[Q]} \EE  \exp\Big( s\Big[ \|\nabla \Phi_S(x_k)- \nabla\Phi(x_k)\| + 2L(1+\kappa) \upsilon \Big]\Big),
\end{aligned}
\end{equation}
where the second inequality uses Jensen's inequality and monotonicity of exponential function, and the third inequality uses summation over $k\in[Q]$ to handle the dependence issue, i.e., the $x_k$ in the last line is independent of $S$. We use the exponential function as an intermediate step so that the final sample complexity depends on $\log(Q)$ rather than $Q$, which is of order $\cO(\upsilon^{-d})$.  Without loss of generality, selecting $\upsilon$ such that $2L(1+\kappa) \upsilon =\frac{\eps}{2}$, we have
\begin{equation}
\label{eq:generalization_agnostic}
\begin{aligned}
&
    \EE \max_{x\in\mathcal{X}}\|\nabla \Phi_S(x)- \nabla\Phi(x)\| \\
\leq\ &
    \frac{1}{s} \log\Bigg(\sum_{k\in [Q]}\EE\exp\autopar{s\automedpar{\autonorm{\nabla{\Phi}(x_k)-\nabla{\Phi_S}(x_k)}-\EE\autonorm{\nabla{\Phi}(x_k)-\nabla{\Phi_S}(x_k)}}}\\
    &\qquad\qquad\qquad\qquad\qquad\qquad
    \cdot\exp\autopar{s\EE\autonorm{\nabla{\Phi}(x_k)-\nabla{\Phi_S}(x_k)}}\exp\autopar{\frac{s\eps}{2}}\Bigg).  
\end{aligned}
\end{equation}
To upper bound $\EE\|\nabla{\Phi}(x_k)-\nabla{\Phi_S}(x_k)\|$, we use the following observation. Define $y_{S^{(i)}}^*(x) \triangleq \argmax_{y\in\mathcal{Y}} F_{S^{(i)}}(x,y)$ where $S=\{\xi_i\}_{i=1}^n$, $S^{(i)} = \{\xi_1,\ldots, \xi_{i-1}, \xi_i^\prime, \xi_{i+1},\ldots, \xi_n\}$ and $\xi_i^\prime$ is i.i.d. from $\xi_i$. Since $x$ is independent of $S$ or $S^{(i)}$ for any $i$, by Danskin's theorem, we have
\begin{equation}
    \begin{split}
        &\ \mathbb{E}\|\nabla{\Phi}(x)-\nabla{\Phi_S}(x)\| 
        =
        \EE\autonorm{\EE_\xi \nabla_x f(x,y^*(x);\xi) -\frac{1}{n}\sum_{i=1}^n \nabla_x f(x,y_S^*(x);\xi_i)}\\
        =\ &
        \EE\Bigg\|\EE_\xi \nabla_x f(x,y^*(x);\xi)-\frac{1}{n}\sum_{i=1}^n \nabla_x f(x,y^*(x);\xi_i)\\
        &\qquad\qquad\qquad
        +\frac{1}{n}\sum_{i=1}^n \nabla_x f(x,y^*(x);\xi_i) -\frac{1}{n}\sum_{i=1}^n \nabla_x f(x,y_S^*(x);\xi_i)\Bigg\|\\
        \leq\ &
        \EE\autonorm{\EE_\xi \nabla_x f(x,y^*(x);\xi)-\frac{1}{n}\sum_{i=1}^n \nabla_x f(x,y^*(x);\xi_i)}\\
        &\qquad\qquad\qquad
        +\EE\autonorm{\frac{1}{n}\sum_{i=1}^n \nabla_x f(x,y^*(x);\xi_i) -\frac{1}{n}\sum_{i=1}^n \nabla_x f(x,y_S^*(x);\xi_i)}\\
        \leq\ &
        \EE\autonorm{\EE_\xi \nabla_x f(x,y^*(x);\xi)-\frac{1}{n}\sum_{i=1}^n \nabla_x f(x,y^*(x);\xi_i)}
        +
        L\autonorm{y^*(x)-y_S^*(x)}\\
        \leq\ &
        \sqrt{\frac{\text{Var}(\nabla_x f)}{n}}+L\autonorm{y^*(x)-y_S^*(x)},
    \end{split}
\end{equation}
where $\text{Var}(\nabla_x f)$ is the variance of $\nabla_x f(\cdot,\cdot;\xi)$ and the second inequality holds by smoothness of $f$. Since the variance is upper bounded by the second moment:
\begin{equation}
    \text{Var}(\nabla_x f)\leq\mathbb{E}\autonorm{\nabla_x f(x,y^*(x);\xi)}^2\leq G^2,
\end{equation}
it further holds that
\begin{equation}
    \label{eq:NCSC_expectation_gradient_difference}
    \mathbb{E}\|\nabla{\Phi}(x)-\nabla{\Phi_S}(x)\| 
    \leq
    \frac{G}{\sqrt{n}}+L\autonorm{y^*(x)-y_S^*(x)}.
\end{equation}
To derive an upper bound on $\autonorm{y^*(x)-y_S^*(x)}$, we first bound $\autonorm{y_{S^{(i)}}^*(x)-y_S^*(x)}$ and utilize the stability argument. Since $f(x,y;\xi)$ is $\mu$-strongly concave in $y$ for any $x$ and $\xi$ and $y_S^*(x)$ is the maximizer of $F_S(x,\cdot)$, we have \begin{equation}
    \label{eq:F_S_strong_concavity}
    \autopar{-F_S\autopar{x,y_{S^{(i)}}^*(x)}} - \autopar{-F_S\autopar{x,y_S^*(x)}} 
    \geq
    \frac{\mu}{2}\Big\|y_{S^{(i)}}^*(x)-y_S^*(x)\Big\|^2,
\end{equation}
On the other hand, we have 
\begin{equation*}
\begin{aligned}
& 
    F_S(x,y_S^*(x)) -F_S(x,y_{S^{(i)}}^*(x))   \\ 
= &
    F_{S^{(i)}}(x,y_S^*(x)) - F_{S^{(i)}}(x,y_{S^{(i)}}^*(x))\\
    &\qquad + \frac{1}{n}\Big[ f(x, y_S^*(x);\xi_i) -f(x, y_{S^{(i)}}^*(x);\xi_i) + f(x, y_{S^{(i)}}^*(x);\xi_i^\prime) -f(x, y_S^*(x);\xi_i^\prime)\Big]\\
\leq  & 
    F_{S^{(i)}}(x,y_S^*(x)) - F_{S^{(i)}}(x,y_{S^{(i)}}^*(x))\\
    &\qquad + \frac{1}{n} \Big|f(x, y_{S^{(i)}}^*(x);\xi_i) -f(x, y_S^*(x);\xi_i)| + \frac{1}{n} |f(x, y_{S^{(i)}}^*(x);\xi_i^\prime) -f(x, y_S^*(x);\xi_i^\prime)\Big|\\
\leq &
    \frac{2G}{n}\big\|y_{S^{(i)}}^*(x)-y_S^*(x)\big\|,
\end{aligned}
\end{equation*}
where the last inequality holds by Lipschitz continuity and the optimality of $y_{S^{(i)}}^*(x)$.
Combined with \eqref{eq:F_S_strong_concavity}, it holds that 
\begin{equation*}
\Big\|y_{S^{(i)}}^*(x)-y_S^*(x)\Big\|\leq \frac{4G}{\mu n}.
\end{equation*}
In addition, we have
\begin{equation}
\label{eq:ERM_generalization}
\begin{aligned}
& 
    \EE [F(x,y^*(x))-F(x,y_S^*(x))] \\ 
=\ &
    \EE \automedpar{F(x,y^*(x))- F_S(x,y^*(x))} +\EE\automedpar{F_S(x, y^*(x)) -  F_S(x,y_S^*(x))}\\
    &\qquad\qquad\qquad + \EE \automedpar{F_S(x,y_S^*(x)) - F(x,y_S^*(x))} \\
\leq & 
    \EE \automedpar{F_S(x,y_S^*(x)) - F(x,y_S^*(x))} \\
=\ & 
    \EE \automedpar{\frac{1}{n}\sum_{i=1}^n f(x,y_S^*(x);\xi_i) - \frac{1}{n}\sum_{i=1}^n \EE_\xi f(x,y_S^*(x);\xi)} \\
=\ &
    \EE \automedpar{\frac{1}{n}\sum_{i=1}^n f(x,y_S^*(x);\xi_i) - \frac{1}{n}\sum_{i=1}^n \EE_{\xi_i} f(x,y_{S^{(i)}}^*(x);\xi_i)} \\
=\ &
    \EE \automedpar{\frac{1}{n}\sum_{i=1}^n f(x,y_S^*(x);\xi_i) - \frac{1}{n}\sum_{i=1}^n f(x,y_{S^{(i)}}^*(x);\xi_i)} \\
\leq\ &
    G \EE\autonorm{y_S^*(x) - y_{S^{(i)}}^*(x)}\\
\leq\ &
    \frac{4 G^2}{\mu n}
\end{aligned}
\end{equation}
where the first inequality holds as $y_S^*(x) = \argmax_{y\in\mathcal{Y}} F_S(x,y_S^*(x))$ and $\EE [F(x,y^*(x))- F_S(x,y^*(x))]=0$, the third equality holds as $y_S^*(x)$ and $y_{S^{(i)}}^*(x)$ are identical distributed and $y_{S^{(i)}}^*(x)$ is independent of $\xi$ by definition, the second inequality holds by Lipschitz continuity of $f$ on $y$, and the last inequality holds by plugging the upper bound on $\|y_S^*(x) - y_{S^{(i)}}^*(x)\|$. On the other hand, since $F(x,y)$ is strongly concave in $y$ and $y^*(x)=\argmax_{y\in\mathcal{Y}} F(x,y)$, it holds that
\begin{equation*}
F(x,y^*(x))-F(x,y_S^*(x)) \geq \frac{\mu}{2}\|y^*(x)-y_S^*(x)\|^2.
\end{equation*}
Therefore, we have
\begin{equation*}
\EE\|y^*(x)-y_S^*(x)\|\leq \sqrt{\frac{8G^2}{\mu^2 n}}.
\end{equation*}
Plugging into \eqref{eq:NCSC_expectation_gradient_difference}, it holds that
\begin{equation}
\label{eq:NCSC-expectation}
\EE\|\nabla{\Phi}(x_k)-\nabla{\Phi_S}(x_k)\| \leq  L\sqrt{\frac{8G^2}{\mu^2 n}}+\frac{G}{\sqrt{n}}.
\end{equation}
Next we show that $\|\nabla{\Phi}(x)-\nabla{\Phi_S}(x)\|-\EE \|\nabla{\Phi}(x)-\nabla{\Phi_S}(x)\|$ is zero-mean sub-Gaussian. Notice that for any $\xi_i^\prime$, we have
\begin{equation}
    \begin{split}
        &   
            \|\nabla{\Phi}(x)-\nabla{\Phi_S}(x)\| - \|\nabla{\Phi}(x)-\nabla{\Phi_{S^{(i)}}}(x)\| \\
        \leq\ &
            \|\nabla{\Phi_S}(x) -\nabla{\Phi_{S^{(i)}}}(x) \| \\
        =\ &
            \autonorm{\frac{1}{n}\sum_{j=1}^n \nabla_x f\autopar{x,y_S^*(x),\xi_j} 
            - 
            \frac{1}{n}\sum_{j\not=i}^n\nabla_xf\autopar{x,y_{S^{(i)}}^*(x),\xi_j} 
            - 
            \frac{1}{n}\nabla_xf\autopar{x,y_{S^{(i)}}^*(x),\xi_i^\prime}}\\
        \leq\ &
            L \autonorm{y_{S^{(i)}}^*(x)-y_S^*(x)} + \frac{1}{n} \autonorm{\nabla_xf(x,y_{S^{(i)}}^*(x);\xi_i^\prime) - \nabla_xf(x,y_{S^{(i)}}^*(x);\xi_i)}\\
        \leq\ &
            \frac{4L G/\mu+ 2 G}{n},
    \end{split}
\end{equation}
where the first inequality uses triangle inequality, the first equality uses the definition of $\Phi_S$ and $\Phi_{S^{(i)}}$, the third inequality uses the assumption that $G$ is the uniform upper bound of $\nabla f(x,y;\xi)$ on $\mathcal{X}\times\mathcal{Y}$ for any $\xi$. By McDiarmid's inequality (Lemma \ref{lm:McDiarmid_inequality}) and the definition of sub-Gaussian random variables, it holds that $\|\nabla{\Phi}(x_k)-\nabla{\Phi_S}(x_k)\|-\EE\|\nabla{\Phi}(x_k)-\nabla{\Phi_S}(x_k)\|$ is a zero-mean sub-Gaussian random variable with variance proxy $\sigma^2 \triangleq \autopar{2L G/\mu+ G}^2/n$. By the definition of zero-mean sub-Gaussian random variables, it holds that
\begin{equation}
\label{eq:NCSC_expecation_subgaussian}
\EE\exp(s[\|\nabla{\Phi}(x_k)-\nabla{\Phi_S}(x_k)\|-\EE\|\nabla{\Phi}(x_k)-\nabla{\Phi_S}(x_k)\|]) \leq \exp\autopar{\frac{s^2\sigma^2}{2}}.
\end{equation}
Plugging \eqref{eq:NCSC-expectation} and \eqref{eq:NCSC_expecation_subgaussian} into \eqref{eq:generalization_agnostic}, we have
\begin{equation}
    \EE \|\nabla \Phi_S(x)- \nabla\Phi(x)\|
\leq 
    \frac{log(Q)}{s} + \frac{s\sigma^2}{2} + L\sqrt{\frac{8G^2}{\mu^2 n}} + \frac{G}{\sqrt{n}} + \frac{\eps}{2}
\end{equation}
Minimizing the right-hand side over $s$, we have
\begin{equation}
    \begin{split}
        \EE \|\nabla \Phi_S(x)- \nabla\Phi(x)\|
        \leq\ &
        2\sqrt{\frac{log(Q) \sigma^2}{2}} + L\sqrt{\frac{8G^2}{\mu^2 n}} +\frac{G}{\sqrt{n}} + \frac{\eps}{2}\\
        =\ &
        \sqrt{\frac{2log(Q) \autopar{2L G/\mu+ G}^2}{n}} + L\sqrt{\frac{8G^2}{\mu^2 n}} +\frac{G}{\sqrt{n}} + \frac{\eps}{2}.
    \end{split}
\end{equation}
Recall that $Q = \cO(\upsilon^{-d})$ with $\upsilon = \epsilon/(4L(1+\kappa))$, thus $\log(Q) =\cO(d\log(4L(1+\kappa)\epsilon^{-1}))$, which verifies the first statement in the theorem. For the sample complexity, following the discussion on the performance measurement in Section \ref{sec:prelim}, it is easy to derive that it requires
\begin{equation}
    n = \cO\Big(2d\eps^{-2}(2L G/\mu+ G)^2\log(4L(1+\kappa)\epsilon^{-1})\Big)=\tilde \cO(d\kappa^2\eps^{-2})
\end{equation}
to guarantee that $\EE \|\nabla \Phi_S(x)- \nabla\Phi(x)\|\leq\eps$ for any $x\in\mathcal{X}$, which concludes the proof.
\end{proof}

\section{Proof of Theorem \ref{thm:NC-C Agnostic}}
\label{apdx_sec:proof_NCC_unif_convergence}

We first provide the proof of Lemma \ref{lemma:nc_sc_to_ncc}.

\begin{proof}
Since $F(x,y)$ is $L$-smooth, it is obvious that $F(x,y)-\frac{\nu}{2}\|y\|^2$ is $(L+\nu)$-smooth.
By \citet[Lemma 3]{thekumparampil2019efficient}, $\hat \Phi(x)$ is $(L+\nu)$-weakly convex in $x$. Therefore, $\hat \Phi(x)+\frac{1}{2\lambda}\|x-x'\|^2$ is $\autopar{\frac{1}{\lambda}- (L+\nu)}$-strongly convex in $x$ for any fixed $x'$. Denote $\hat y(x)\triangleq\argmax_{y\in\mathcal{Y}} F(x,y)-\frac{\nu}{2}\|y\|^2$, $y^*(x)\triangleq \argmax_{y\in\mathcal{Y}} F(x,y)$. It holds that
\begin{equation}
\label{eq:link}
\begin{aligned}
    &
    \frac{1}{2}(1/\lambda- (L+\nu))\|\prox_{\lambda \Phi}(x) - \prox_{\lambda \hat \Phi}(x)\|^2 \\
\leq\ &
    \hat \Phi(\prox_{\lambda \Phi}(x))+\frac{1}{2\lambda}\|\prox_{\lambda \Phi}(x)-x\|^2 - \hat \Phi(\prox_{\lambda \hat \Phi}(x))- \frac{1}{2\lambda}\|\prox_{\lambda\hat  \Phi}(x)-x\|^2\\
=\ & 
    F(\prox_{\lambda \Phi}(x),\hat y(\prox_{\lambda \Phi}(x))) - \frac{\nu}{2}\|\hat y(\prox_{\lambda \Phi}(x))\|^2 + \frac{1}{2\lambda}\|\prox_{\lambda \Phi}(x)-x\|^2\\
    & \qquad\qquad- 
    F(\prox_{\lambda \hat \Phi}(x),\hat y(\prox_{\lambda \hat \Phi}(x))) + \frac{\nu}{2}\|\hat y(\prox_{\lambda \hat \Phi}(x))\|^2 - \frac{1}{2\lambda}\|\prox_{\lambda\hat  \Phi}(x)-x\|^2\\
\leq\ &
    F(\prox_{\lambda \Phi}(x), y^*(\prox_{\lambda \Phi}(x))) + \frac{1}{2\lambda}\|\prox_{\lambda \Phi}(x)-x\|^2 - \frac{\nu}{2}\|\hat y(\prox_{\lambda \Phi}(x))\|^2\\
    & \qquad\qquad- 
    F(\prox_{\lambda \hat \Phi}(x),\hat y(\prox_{\lambda \hat \Phi}(x))) - \frac{1}{2\lambda}\|\prox_{\lambda\hat  \Phi}(x)-x\|^2  + \frac{\nu}{2}\|\hat y(\prox_{\lambda \hat \Phi}(x))\|^2\\
\leq\ &
    F(\prox_{\lambda \Phi}(x), y^*(\prox_{\lambda \Phi}(x))) + \frac{1}{2\lambda}\|\prox_{\lambda \Phi}(x)-x\|^2 - \frac{\nu}{2}\|\hat y(\prox_{\lambda \Phi}(x))\|^2\\
    & \qquad\qquad- 
    F(\prox_{\lambda \hat \Phi}(x), y^*(\prox_{\lambda \hat \Phi}(x))) - \frac{1}{2\lambda}\|\prox_{\lambda\hat  \Phi}(x)-x\|^2  + \frac{\nu}{2}\| y^*(\prox_{\lambda \hat \Phi}(x))\|^2\\
=\ &
    \Phi(\prox_{\lambda  \Phi}(x)) + \frac{1}{2\lambda}\|\prox_{\lambda \Phi}(x)-x\|^2 - \Phi(\prox_{\lambda \hat \Phi}(x))-\frac{1}{2\lambda}\|\prox_{\lambda \hat\Phi}(x)-x\|^2\\
    & \qquad\qquad+ 
    \frac{\nu}{2}\|y^*(\prox_{\lambda \hat \Phi}(x))\|^2-\frac{\nu}{2}\|\hat y(\prox_{\lambda \Phi}(x))\|^2\\
\leq\ & 
    \frac{\nu}{2}\|y^*(\prox_{\lambda \hat \Phi}(x))\|^2-\frac{\nu}{2}\|\hat y(\prox_{\lambda \Phi}(x))\|^2\\
\leq\ &
    \frac{\nu D_\mathcal{Y}}{2},
\end{aligned}
\end{equation}
where the first inequality holds by strong convexity of $\hat \Phi(z)+\frac{1}{2\lambda}\|z-x\|^2$ and optimality of $\prox_{\lambda \hat \Phi}(x)$ for $\min_{z\in\mathcal{X}}\hat \Phi(z)+\frac{1}{2\lambda}\|z-x\|^2$, the first equality holds by definition of $\hat \Phi$,  the second inequality holds by optimality of $y^*(\prox_{\lambda \Phi}(x)))=\argmax_{y\in\mathcal{Y}} F(\prox_{\lambda \Phi}(x), y)$, the third inequality holds by optimality of $\hat y(\prox_{\lambda \Phi}(x)))=\argmax_{y\in\mathcal{Y}} F(\prox_{\lambda \Phi}(x), y)-\frac{\nu}{2}\|y\|^2$, the second equality holds by definition of $\Phi$, the fourth inequality holds by optimality of $\prox_{\lambda \Phi}(x) = \argmin_{x\in\mathcal{X}} \{ \Phi(z)+\frac{1}{2\lambda}\|z-x\|^2\}$, the last inequality holds by the compactness of domain $\mathcal{Y}$.    
\end{proof}

Next, we demonstrate the proof of Theorem \ref{thm:NC-C Agnostic}.

\begin{proof}
By Lemma \ref{lemma:nc_sc_to_ncc}, we have
\begin{equation*}
\begin{aligned}
& 
    \|\prox_{\lambda \Phi}(x) - \prox_{\lambda \hat \Phi}(x)\|\leq \sqrt{\frac{\lambda\nu D_\mathcal{Y}}{1-\lambda(L+\nu)}};\\
& 
    \|\prox_{\lambda \Phi_S}(x) - \prox_{\lambda \hat \Phi_S}(x)\|\leq \sqrt{\frac{\lambda\nu D_\mathcal{Y}}{1-\lambda(L+\nu)}}.
\end{aligned}
\end{equation*}
To derive the desired uniform convergence, similar to the proof of Theorem \ref{thm:NC-SC Agnostic}, we take an $\upsilon$-net $\{x_k\}_{k=1}^Q$ on $\mathcal{X}$ so that there exists a $k\in\autobigpar{1,\cdots, Q}$ for any $x\in\mathcal{X}$ such that $\|x-x_k\|\leq \upsilon$. Note that such $\upsilon$-net exists with  $Q=\cO(\upsilon^{-d})$ for compact $\mathcal{X}$. We first decompose the error as the approximation error from NC-SC minimax problems to NC-C minimax problems. Then we utilize the $\upsilon$-net to address the dependence between $S$ and $\argmax_{x\in\mathcal{X}} \|\nabla  \Phi_S^\lambda(x)-\nabla \Phi^\lambda(x)\|$. First, note that

\begin{equation}
\label{eq:NCC-agnostic-key}
\begin{aligned}
&
    \EE\max_{x\in\mathcal{X}}\|\nabla  \Phi_S^\lambda(x)-\nabla \Phi^\lambda(x)\| \\
= & 
    \frac{1}{\lambda}\EE\max_{x\in\mathcal{X}}\|\prox_{\lambda \Phi_S}(x)-\prox_{\lambda \Phi}(x)\|\\
\leq &
    \frac{1}{\lambda}\EE\max_{x\in\mathcal{X}}\|\prox_{\lambda \Phi_S}(x)-\prox_{\lambda \hat \Phi_S}(x)\|+\|\prox_{\lambda \hat \Phi_S}(x)-\prox_{\lambda \hat \Phi}(x)\|\\
    &\qquad\qquad\qquad +\|\prox_{\lambda \hat \Phi}(x)- \prox_{\lambda \Phi}(x)\|\\
\leq &
    \frac{2}{\lambda}\sqrt{\frac{\lambda\nu D_\mathcal{Y}}{1-\lambda(L+\nu)}} + \frac{1}{\lambda}\EE\max_{x\in\mathcal{X}}\|\prox_{\lambda \hat \Phi_S}(x)-\prox_{\lambda \hat \Phi}(x)\|\\
\leq &
    \frac{2}{\lambda}\sqrt{\frac{\lambda\nu D_\mathcal{Y}}{1-\lambda(L+\nu)}} + \frac{1}{\lambda}\EE\max_{x\in\mathcal{X}}\big[\|\prox_{\lambda \hat \Phi_S}(x)-\prox_{\lambda \hat \Phi_S}(x_k)\| \\
    &\qquad\qquad\qquad\qquad\  
    + \|\prox_{\lambda \hat \Phi_S}(x_k) - \prox_{\lambda \hat \Phi}(x_k) \| + \|\prox_{\lambda \hat \Phi}(x_k)-\prox_{\lambda \hat \Phi}(x)\|\big]\\
\leq &
    2\sqrt{\frac{\nu D_\mathcal{Y}}{\lambda(1-\lambda(L+\nu))}} + \frac{1}{\lambda}\EE\max_{k\in[Q]}\|\prox_{\lambda \hat \Phi_S}(x_k) - \prox_{\lambda \hat \Phi}(x_k) \| + \frac{2\upsilon}{\lambda (1-\lambda(L+\nu))}\\
\leq &
    2\sqrt{\frac{\nu D_\mathcal{Y}}{\lambda(1-\lambda(L+\nu))}} + \frac{1}{\lambda s}\log\autopar{\sum_{k\in[Q]}\EE \exp\autopar{s\autonorm{\prox_{\lambda \hat \Phi_S}(x_k) - \prox_{\lambda \hat \Phi}(x_k)}}}\\
    &\qquad\qquad\qquad\qquad\qquad + \frac{2\upsilon}{\lambda (1-\lambda(L+\nu))},
\end{aligned}
\end{equation}
where the first and the third inequality use the triangle inequality, the second  inequality uses Lemma \ref{lemma:nc_sc_to_ncc} for $\Phi$ and $\Phi_S$,  $x_k$ is the closest point to $x$ in the $\upsilon$-net, the fourth inequality holds by $(1-\lambda(L+\nu))^{-1}$-Lipschitz continuity of proximal operator~\citep[Lemma 4.3]{davis2022graphical}  since $F(x,y)-\frac{\nu}{2}\|y\|^2$ is a $(L+\nu)$-smooth function, and the last inequality follows a similar argument in \eqref{eq:u-net-transformation}. All that remains is to bounding $\EE \exp\autopar{s\autonorm{\prox_{\lambda \hat \Phi_S}(x) - \prox_{\lambda \hat \Phi}(x)}}$ for $x\in\mathcal{X}$  that is independent of $S$. Notice that 
\begin{equation*}
\begin{aligned}
&
     \EE \exp\autopar{s\autonorm{\prox_{\lambda \hat \Phi_S}(x_k) - \prox_{\lambda \hat \Phi}(x_k)}} \\
=\ &
    \EE \exp\autopar{s\automedpar{\autonorm{\prox_{\lambda \hat \Phi_S}(x_k) - \prox_{\lambda \hat \Phi}(x_k) } -\EE \autonorm{\prox_{\lambda \hat \Phi_S}(x_k) - \prox_{\lambda \hat \Phi}(x_k)}}} \\ 
    &\qquad\qquad\qquad
    \cdot\exp\autopar{s\EE \autonorm{\prox_{\lambda \hat \Phi_S}(x_k) - \prox_{\lambda \hat \Phi}(x_k)}}\\
\end{aligned}
\end{equation*}
Next, we show that $\autonorm{\prox_{\lambda \hat \Phi_S}(x_k) - \prox_{\lambda \hat \Phi}(x_k) } -\EE \autonorm{\prox_{\lambda \hat \Phi_S}(x_k) - \prox_{\lambda \hat \Phi}(x_k)}$ is a zero-mean sub-Guassian random variable and $\EE \autonorm{\prox_{\lambda \hat \Phi_S}(x_k) - \prox_{\lambda \hat \Phi}(x_k)}$ is bounded.
Since $x_k$ is independent of $S$, it is sufficient to show an upper bound of the following term where $x\in\mathcal{X}$ is independent of $S$.
\begin{equation*}
\EE \|\prox_{\lambda \hat \Phi_S}(x) - \prox_{\lambda \hat \Phi}(x) \|.
\end{equation*}
Recall the definition that 
\begin{align}
& 
    \prox_{\lambda \hat \Phi}(x) = \argmin_{z\in\mathcal{X}}\Big\{ \max_{y\in\mathcal{Y}} \EE_\xi f(z,y;\xi) -\frac{\nu}{2}\|y\|^2+\frac{1}{2\lambda} \|z-x\|^2\Big\}, \label{eq:zyx}\\
&
    \prox_{\lambda \hat \Phi_S}(x) = \argmin_{z\in\mathcal{X}}\autobigpar{\max_{y\in\mathcal{Y}} \frac{1}{n}\sum_{i=1}^n \Big[f(z,y;\xi_i) -\frac{\nu}{2}\|y\|^2+\frac{1}{2\lambda} \|z-x\|^2\Big]}.\label{eq:zyx_sample}
\end{align}
Denote the solution of \eqref{eq:zyx} as $\autopar{z^*(x), y^*(x)}$ and the solution of \eqref{eq:zyx_sample} as $\autopar{z_S(x),  y_S(x)}$.
We need to bound the distance between $z^*(x)$ and $z_S(x)$, note that this $\autopar{z^*(x), y^*(x)}$ comes from a strongly-convex-strongly-concave stochastic minimax problem, where the modulus is $\frac{1-\lambda L}{\lambda}$ and $\nu$, respectively; while the other comes from the sample average approximation counterpart. By \citet[Theorem 1 and Appendix A.1]{zhang2021generalization}, 
we have the following results:
\begin{equation*}
\frac{1-\lambda L}{2\lambda}\EE \|z_S(x) - z^*(x)\|^2 + \frac{\nu}{2}\EE\|y_S(x) - y^*(x)\|^2\leq \frac{2\sqrt{2}}{n}\autopar{\frac{\hat L_x^2\lambda}{1-\lambda L}+\frac{\hat L_y^2}{\nu}},
\end{equation*}
where $\hat L_x$ is the Lipschitz continuity parameter of $f(z,y;\xi)+\frac{1}{2\lambda}\|z-x\|^2$ in $z\in\mathcal{X}$ for any given $y\in\mathcal{Y}$ and $\xi$, and $\hat L_y$ is the Lipschitz continuity parameter of $f(z,y;\xi)-\frac{\nu}{2}\|y\|^2$ in $y\in\mathcal{Y}$ for any given $z\in\mathcal{X}$ and $\xi$. 
More specifically, since $f(\cdot,\cdot;\xi)$ is $G$-Lipschitz continuous for any $\xi$, we have
\begin{equation*}
\hat L_x \leq G + \frac{2\sqrt{D_\mathcal{X}}}{\lambda},\quad
\hat L_y \leq G + \nu\sqrt{D_{\mathcal{Y}}}.
\end{equation*}
Therefore, we have
\begin{equation}
\label{eq:NCC-agnostic-expectation-bound}
\begin{split}
    \EE \|\prox_{\lambda \hat \Phi_S}(x) - \prox_{\lambda \hat \Phi}(x) \| 
    =\ & \EE \|z_S(x) - z^*(x)\|\\
    \leq\ & \sqrt{\EE \|z_S(x) - z^*(x)\|^2} 
    \leq \sqrt{\frac{2\lambda}{1-\lambda L}\frac{2\sqrt{2}}{n}\autopar{\frac{\hat L_x^2\lambda}{1-\lambda L}+\frac{ \hat L_y^2}{\nu}}}.
\end{split}
\end{equation}
Next, we show that $\|z_S(x) - z^*(x)\| - \EE\|z_S(x) - z^*(x)\| $  is a zero-mean sub-Gaussian random variable. Replacing one sample $\xi_i$ in $S$ with an i.i.d. sample $\xi_i^\prime$ and denote the new dataset as $S^{(i)}$, by \citet[Lemma 2]{zhang2021generalization}, it holds that
\begin{equation*}
\autonorm{z_S(x) - z^*(x)} - \autonorm{z_{S^{(i)}}(x) - z^*(x)} 
\leq 
\autonorm{z_S(x)  - z_{S^{(i)}}(x)}
\leq 
\frac{2}{n}\sqrt{\frac{\hat L_x^2 \lambda^2}{(1-\lambda L)^2}+\frac{\hat L_y^2 \lambda}{\nu(1-\lambda L)}},
\end{equation*}
where $z_{S^{(i)}}$ follows a similar definition of $z_S$ but with a different dataset $S^{(i)}$. By McDiarmid’s inequality (Lemma \ref{lm:McDiarmid_inequality}) and the definition of sub-Gaussian random variables, it holds that $\|z_S(x) - z^*(x)\| - \EE\|z_S(x) - z^*(x)\|$  is a zero-mean sub-Gaussian random variable with variance proxy
$
\frac{1}{n}\autopar{\frac{\hat L_x^2 \lambda^2}{(1-\lambda L)^2}+\frac{\hat L_y^2 \lambda}{\nu(1-\lambda L)}}
$. By the definition of sub-Gaussian random variable and \eqref{eq:NCC-agnostic-expectation-bound}, it holds that 
\begin{equation}
    \label{eq:NCC-proximal-operator-empirical-population}
    \begin{split}
        &
            \EE \exp\autopar{s\autonorm{\prox_{\lambda \hat \Phi_S}(x_k) - \prox_{\lambda \hat \Phi}(x_k)}} \\
        =\ &
            \EE \exp\autopar{s\automedpar{\autonorm{\prox_{\lambda \hat \Phi_S}(x_k) - \prox_{\lambda \hat \Phi}(x_k) } -\EE \autonorm{\prox_{\lambda \hat \Phi_S}(x_k) - \prox_{\lambda \hat \Phi}(x_k)}}} \\ 
            &\qquad\qquad\qquad
            \cdot\exp\autopar{s\EE \autonorm{\prox_{\lambda \hat \Phi_S}(x_k) - \prox_{\lambda \hat \Phi}(x_k)}}\\
        \leq\ &
            \EE \exp\autopar{s\automedpar{\autonorm{\prox_{\lambda \hat \Phi_S}(x_k) - \prox_{\lambda \hat \Phi}(x_k)}-\EE \autonorm{\prox_{\lambda \hat \Phi_S}(x_k) - \prox_{\lambda \hat \Phi}(x_k)}}}\\
            &\qquad\qquad\qquad
            \cdot\exp\autopar{s\sqrt{\frac{2\lambda}{1-\lambda L}\frac{2\sqrt{2}}{n}\autopar{\frac{\hat L_x^2\lambda}{1-\lambda L}+\frac{ \hat L_y^2}{\nu}}}}\\
        \leq\ &
            \exp\autopar{\frac{s^2}{2n}\autopar{\frac{\hat L_x^2 \lambda^2}{(1-\lambda L)^2}+\frac{\hat L_y^2 \lambda}{\nu(1-\lambda L)}}}
            \exp\autopar{s\sqrt{\frac{2\lambda}{1-\lambda L}\frac{2\sqrt{2}}{n}\autopar{\frac{\hat L_x^2\lambda}{1-\lambda L}+\frac{ \hat L_y^2}{\nu}}}},
    \end{split}
\end{equation}
where the second inequality uses definition of zero-mean sub-Gaussian random variable.
Combining \eqref{eq:NCC-proximal-operator-empirical-population} with \eqref{eq:NCC-agnostic-key}, for
\begin{equation}
    \lambda = \frac{1}{2L},\quad
    \upsilon = \frac{\epsilon\lambda(1-\lambda L)}{8} =\frac{\epsilon}{32L},\quad
    s = \sqrt{2n\log(Q)\autopar{\frac{\hat L_x^2 \lambda^2}{(1-\lambda L)^2}+\frac{\hat L_y^2 \lambda}{\nu(1-\lambda L)}}^{-1}},
\end{equation}
it holds that
\begin{equation}
\label{eq:NCC-agnostic-final}
\begin{aligned}
&
    \EE\max_{x\in\mathcal{X}}\|\nabla  \Phi_S^\lambda(x)-\nabla \Phi^\lambda(x)\| \\
\leq\ &
    2\sqrt{\frac{\nu D_\mathcal{Y}}{\lambda(1-\lambda(L+\nu))}} + \frac{2\upsilon}{\lambda (1-\lambda(L+\nu))} \\
    & + \frac{1}{\lambda s}\log\autopar{Q\exp\autopar{\frac{s^2}{2n}\autopar{\frac{\hat L_x^2 \lambda^2}{(1-\lambda L)^2}+\frac{\hat L_y^2 \lambda}{\nu(1-\lambda L)}}}}\\
    & \qquad\qquad\qquad\qquad
    +\frac{1}{\lambda s}\log\autopar{\exp\autopar{s\sqrt{\frac{2\lambda}{1-\lambda L}\frac{2\sqrt{2}}{n}\autopar{\frac{\hat L_x^2\lambda}{1-\lambda L}+\frac{ \hat L_y^2}{\nu}}}}}\\
\leq\ &
    2\sqrt{\frac{\nu D_\mathcal{Y}}{\lambda(1-\lambda L)}} + \frac{1}{\lambda s}\log(Q)+\frac{1}{\lambda s}\frac{s^2}{2n}\autopar{\frac{\hat L_x^2 \lambda^2}{(1-\lambda L)^2}+\frac{\hat L_y^2 \lambda}{\nu(1-\lambda L)}}\\
    &\qquad\qquad\qquad
    +\frac{1}{\lambda s}
    s\sqrt{\frac{2\lambda}{1-\lambda L}\frac{2\sqrt{2}}{n}\Big(\frac{\hat L_x^2\lambda}{1-\lambda L}+\frac{ \hat L_y^2}{\nu}\Big)} + \frac{2\upsilon}{\lambda (1-\lambda L)}\\
=\ &
    2\sqrt{\frac{\nu D_\mathcal{Y}}{\lambda(1-\lambda L)}} + \frac{\log(Q)}{\lambda s}+\frac{1}{\lambda }\frac{s}{2n}\autopar{\frac{\hat L_x^2 \lambda^2}{(1-\lambda L)^2}+\frac{\hat L_y^2 \lambda}{\nu(1-\lambda L)}}\\
    &\ +\frac{1}{\lambda}
    \sqrt{\frac{2\lambda}{1-\lambda L}\frac{2\sqrt{2}}{n}\autopar{\frac{\hat L_x^2\lambda}{1-\lambda L}+\frac{ \hat L_y^2}{\nu}}}+ \frac{\epsilon}{4}\\
=\ &
    2\sqrt{4 L\nu D_\mathcal{Y}} + 4L\sqrt{\frac{\log(Q)}{2n}\autopar{\frac{\hat L_x^2}{L^2} +\frac{ \hat L_y^2 }{\nu L}}} + 2L\sqrt{\frac{4\sqrt{2}}{L n}\autopar{\frac{\hat L_x^2}{L}+\frac{ \hat L_y^2}{\nu}}}+\frac{\eps}{4}\\
=\ &
    2\sqrt{4 L\nu D_\mathcal{Y}} + 4L\sqrt{\frac{\log(Q)}{2n}\autopar{\frac{\hat L_x^2}{L^2} +\frac{ \hat L_y^2 }{\nu L}}}\\
    &\qquad\qquad\qquad
    + 2L\sqrt{\frac{4\sqrt{2}}{L n}\autopar{\frac{( G+ 4L\sqrt{D_\mathcal{X}})^2}{L}+\frac{ ( G+\nu\sqrt{D_\mathcal{Y}})^2}{\nu}}}+\frac{\eps}{4}.
\end{aligned}  
\end{equation}
Here 
the first equality holds by the selection of $\upsilon$, the second equality holds by the selection of $\lambda$ and $s$, and the last equality holds by plugging in $\hat L_x$ and $\hat L_y$. Note that $\upsilon$, $s$, and $\nu$ are only used for analysis purposes, and $\lambda$ is only used in the definition of gradient mapping. Thus one has free choices on these parameters. 
Since $Q = \cO\autopar{\autopar{\frac{D_\mathcal{X}}{\upsilon}}^d}$, then we choose $\upsilon=\tilde\cO\autopar{\sqrt{\frac{d}{n}}}$ in the right-hand side above, which verifies the first statement. For the sample complexity result, to make sure that the right-hand side of \eqref{eq:NCC-agnostic-final} of order  $\cO(\eps)$, it suffices to have 
\begin{equation}
    \nu=\cO(\eps^2),\quad
    n = \cO\autopar{\frac{\log(Q)}{\nu}\eps^{-2}}= \cO\autopar{d\eps^{-4}\log\autopar{\eps^{-1}}},
\end{equation}
which concludes the proof.
\end{proof}

\section{Proof of Theorem \ref{thm:stability and generalization}}
\label{apdx:NCSC stability gen}
For simplicity we define the following notations:
\begin{equation}
    \begin{split}
        F(x,y)\triangleq &\E_\xi\automedpar{f(x,y;\xi)},\quad
        \Phi(x)\triangleq \max_y F(x,y),\quad
        F_S(x,y)\triangleq \frac{1}{n}\sum_{i=1}^n\automedpar{f(x,y;\xi_i)},\quad
        \Phi_S(x)\triangleq \max_y F_S(x,y),\\
        y^*(x)\triangleq & \argmax_y F(x,y),\quad
        y_S^*(x)\triangleq \argmax_y F_S(x,y),\quad
        \Phi(x;\xi)\triangleq \max_y f(x,y;\xi),
    \end{split}
\end{equation}
and the Moreau envelope of a function $\Phi$:
\begin{equation}
    \label{eq:Moreau_Envelope}
    \Phi^\lambda(x)\triangleq\min_{z\in\mathcal{X}}\autobigpar{\Phi(x)+\frac{1}{2\lambda}\autonorm{z-x}_2^2},\quad
    \prox_{\lambda \Phi}(x)\triangleq\argmin_{z\in\mathcal{X}}\autobigpar{\Phi(x)+\frac{1}{2\lambda}\autonorm{z-x}_2^2},
\end{equation}
similar notations can be defined for $\Phi_S$, which we do not repeat here.

\begin{definition}[Uniform Stability]
    
    We say a randomized algorithm $\mathcal{A}$ is $\delta$-uniformly stable in $x$-gradients if for every two dataset $S, S'$ which differ in only one sample, for every $\xi\in\Xi$ we have
    \begin{equation}
        \sup_\xi\mathbb{E}_\mathcal{A}\autonorm{\nabla_x f\autopar{\mathcal{A}_x(S),\mathcal{A}_y(S);\xi}-\nabla_x f\autopar{\mathcal{A}_x(S'), \mathcal{A}_y(S');\xi}}^2\leq\delta^2.
    \end{equation}
    
\end{definition}

\begin{lemma}[Concentration of Optimizers]
    \label{lm:distance_maximizer_y_yS}
    For $y^*$ and $y_S^*$ defined above, with Assumption \ref{as:ncsc-ncc}, we have for any $x\in\mathcal{X}$,
    \begin{equation}
        \autonorm{y^*(x)-y_S^*(x)}\leq\frac{1}{\mu}\autonorm{\nabla_y F_S(x,y^*(x))-\nabla_y F(x,y^*(x))}.
    \end{equation}
\end{lemma}

\begin{proof}
    By the optimality of $y^*(x)$ and $y_S^*(x)$, we have for any $y\in\mathcal{Y}$
    \begin{equation}
        \begin{split}
            \autoprod{y-y^*(x), \nabla_y F(x,y^*(x))}&\leq 0\\
            \autoprod{y-y_S^*(x), \nabla_y F_S(x,y_S^*(x))}&\leq 0.
        \end{split}
    \end{equation}
Setting $y=y_S^*(x)$ and $y=y^*(x)$ in the above inequalities respectively, we have
    \begin{equation}
        \label{eq:concentration_maximizer_1}
        \autoprod{y_S^*(x)-y^*(x), \nabla_y F(x,y^*(x))-\nabla_y F_S(x,y_S^*(x))}\leq 0.
    \end{equation}
In addition, by strong concavity of $F_S(x,\cdot)$, we have
    \begin{equation}
        \label{eq:concentration_maximizer_2}
        \autoprod{y_S^*(x)-y^*(x), \nabla_y F_S(x,y_S^*(x))-\nabla_y F_S(x,y^*(x))}+\mu\autonorm{y_S^*(x)-y^*(x)}^2\leq 0.
    \end{equation}
Combining \eqref{eq:concentration_maximizer_1} and \eqref{eq:concentration_maximizer_2}, we have
    \begin{equation}
        \label{eq:concentration_maximizer_3}
        \autoprod{y_S^*(x)-y^*(x), \nabla_y F(x,y^*(x))-\nabla_y F_S(x,y^*(x))}+\mu\autonorm{y_S^*(x)-y^*(x)}^2\leq 0.
    \end{equation}
Rearranging terms, it holds that
    \begin{equation}
        \begin{split}
            \mu\autonorm{y_S^*(x)-y^*(x)}^2
            \leq\ &
            \autoprod{y_S^*(x)-y^*(x), \nabla_y F_S(x,y^*(x))-\nabla_y F(x,y^*(x))}\\
            \leq\ &
            \autonorm{y_S^*(x)-y^*(x)}\cdot \autonorm{\nabla_y F_S(x,y^*(x))-\nabla_y F(x,y^*(x))},
        \end{split}
    \end{equation}
    which implies
    \begin{equation}
        \begin{split}
            \autonorm{y_S^*(x)-y^*(x)}
            \leq\ &
            \frac{1}{\mu}\autonorm{\nabla_y F_S(x,y^*(x))-\nabla_y F(x,y^*(x))}.
        \end{split}
    \end{equation}
It concludes the proof.
\end{proof}

\begin{lemma}[Stability of Optimizers]
    \label{lm:distance_maximizer_yS1_yS2}
    For $y_S^*$ and $y_{S'}^*$ defined above where $S$ and $S'$ are two dataset differing in only one sample ($\xi_i$ and $\xi_i'$), with Assumption \ref{as:ncsc-ncc} while $\mu>0$, we have for any $x\in\mathcal{X}$,
    \begin{equation}
        \autonorm{y_S^*(x)-y_{S'}^*(x)}\leq\frac{1}{\mu}\autonorm{\nabla_y F_S(x,y_{S'}^*(x))-\nabla_y F_{S'}(x,y_{S'}^*(x))}\leq\frac{2G}{n\mu}.
    \end{equation}
\end{lemma}

\begin{proof}
    The proof is similar to that of Lemma \ref{lm:distance_maximizer_y_yS}. By the optimality of $y_S^*(x)$ and $y_{S'}^*(x)$, we have for any $y\in\mathcal{Y}$
    \begin{equation}
        \begin{split}
            \autoprod{y-y_S^*(x), \nabla_y F_S(x,y_S^*(x))}&\leq 0\\
            \autoprod{y-y_{S'}^*(x), \nabla_y F_{S'}(x,y_{S'}^*(x))}&\leq 0.
        \end{split}
    \end{equation}
    Setting $y=y_{S'}^*(x)$ and $y=y_S^*(x)$ in the above inequalities respectively, we have
    \begin{equation}
        \label{eq:stability_maximizer_1}
        \autoprod{y_S^*(x)-y_{S'}^*(x), \nabla_y F_{S'}(x,y_{S'}^*(x))-\nabla_y F_S(x,y_S^*(x))}\leq 0.
    \end{equation}
    In addition, by strong concavity of $F_S(x,\cdot)$, we have
    \begin{equation}
        \label{eq:stability_maximizer_2}
        \autoprod{y_S^*(x)-y_{S'}^*(x), \nabla_y F_S(x,y_S^*(x))-\nabla_y F_S(x,y_{S'}^*(x))}+\mu\autonorm{y_S^*(x)-y_{S'}^*(x)}^2\leq 0.
    \end{equation}
    Combining \eqref{eq:stability_maximizer_1} and \eqref{eq:stability_maximizer_2}, we have
    \begin{equation}
        \label{eq:stability_maximizer_3}
        \autoprod{y_S^*(x)-y_{S'}^*(x), \nabla_y F_S(x,y_{S'}^*(x))-\nabla_y F_S(x,y_{S'}^*(x))}+\mu\autonorm{y_S^*(x)-y_{S'}^*(x)}^2\leq 0.
    \end{equation}
    Rearranging terms, it holds that
    \begin{equation}
        \begin{split}
            \mu\autonorm{y_S^*(x)-y_{S'}^*(x)}^2
            \leq\ &
            \autoprod{y_S^*(x)-y_{S'}^*(x), \nabla_y F_S(x,y_{S'}^*(x))-\nabla_y F_{S'}(x,y_{S'}^*(x))}\\
            \leq\ &
            \autonorm{y_S^*(x)-y_{S'}^*(x)}\cdot \autonorm{\nabla_y F_S(x,y_{S'}^*(x))-\nabla_y F_{S'}(x,y_{S'}^*(x))},
        \end{split}
    \end{equation}
    which implies
    \begin{equation}
        \begin{split}
            \autonorm{y_S^*(x)-y_{S'}^*(x)}
            \leq\ &
            \frac{1}{\mu}\autonorm{\nabla_y F_S(x,y_{S'}^*(x))-\nabla_y F_{S'}(x,y_{S'}^*(x))}\\
            =\ &
            \frac{1}{\mu}\autonorm{\frac{1}{n}\autopar{\nabla_y f(x,y_{S'}^*(x);\xi_i)-\nabla_y f(x,y_{S'}^*(x);\xi_i')}}
            \leq
            \frac{2G}{n\mu},
        \end{split}
    \end{equation}
    which concludes the proof. Here the equality above is due to the variables being the same ($x,y_{S'}^*(x)$), while $S$ and $S'$ differ in only one sample.
\end{proof}

\begin{theorem}[Stability and Generalization, NC-SC]
    Let $\mathcal{A}$ be an $\delta$-uniformly primal stable algorithm, for any function $f$ satisfying Assumption \ref{as:ncsc-ncc} with $\mu>0$, we have
    \begin{equation}
        \mathbb{E}_{\mathcal{A},S}\autonorm{\nabla\Phi\autopar{\mathcal{A}_x(S)}-\nabla\Phi_S\autopar{\mathcal{A}_x(S)}}
        \leq
        (1+\kappa)\autopar{4\delta+\frac{ G}{\sqrt{n}}}.
    \end{equation}
\end{theorem}

\begin{proof}
    Following the definition, we have
    \begin{equation}
        \begin{split}
            &\nabla\Phi\autopar{\mathcal{A}_x(S)}-\nabla\Phi_S\autopar{\mathcal{A}_x(S)}\\
            =\ &
            \nabla_x F\autopar{\mathcal{A}_x(S), y^*(\mathcal{A}_x(S))}
            -
            \nabla_x F_S\autopar{\mathcal{A}_x(S), y_S^*(\mathcal{A}_x(S))}\\
            =\ &
            \nabla_x F\autopar{\mathcal{A}_x(S), y^*(\mathcal{A}_x(S))}
            -
            \nabla_x F_S\autopar{\mathcal{A}_x(S), y^*(\mathcal{A}_x(S))}
            +
            \nabla_x F_S\autopar{\mathcal{A}_x(S), y^*(\mathcal{A}_x(S))}
            -
            \nabla_x F_S\autopar{\mathcal{A}_x(S), y_S^*(\mathcal{A}_x(S))},
        \end{split}
    \end{equation}
    so we know that
    \begin{equation}
        \begin{split}
            &\autonorm{\nabla\Phi\autopar{\mathcal{A}_x(S)}-\nabla\Phi_S\autopar{\mathcal{A}_x(S)}}\\
            \leq\ &
            \autonorm{
            \nabla_x F\autopar{\mathcal{A}_x(S), y^*(\mathcal{A}_x(S))}
            -
            \nabla_x F_S\autopar{\mathcal{A}_x(S), y^*(\mathcal{A}_x(S))}
            }\\
            &\qquad\qquad\qquad +
            \autonorm{
            \nabla_x F_S\autopar{\mathcal{A}_x(S), y^*(\mathcal{A}_x(S))}
            -
            \nabla_x F_S\autopar{\mathcal{A}_x(S), y_S^*(\mathcal{A}_x(S))}
            },
        \end{split}
    \end{equation}
    for the first term above, by \citet[Theorem 2]{lei2022stability} (i.e., regarding $\autopar{\mathcal{A}_x(S), y^*(\mathcal{A}_x(S))}$ as one single variable to recover their conclusion), we have
    \begin{equation}
        \label{eq:stability_term1}
        \mathbb{E}_{\mathcal{A},S}\autonorm{
        \nabla_x F\autopar{\mathcal{A}_x(S), y^*(\mathcal{A}_x(S))}
        -
        \nabla_x F_S\autopar{\mathcal{A}_x(S), y^*(\mathcal{A}_x(S))}
        }
        \leq
        4\delta+\sqrt{\frac{\text{Var}(\nabla_x f)}{n}}
        \leq
        4\delta+\frac{G}{\sqrt{n}},
    \end{equation}
    for the second term above, by Lemma \ref{lm:distance_maximizer_y_yS}, we have
    \begin{equation}
        \begin{split}
            &
            \EE_{\mathcal{A},S}\autonorm{
            \nabla_x F_S\autopar{\mathcal{A}_x(S), y^*(\mathcal{A}_x(S))}
            -
            \nabla_x F_S\autopar{\mathcal{A}_x(S), y_S^*(\mathcal{A}_x(S))}
            }\\
            \leq\ &
            L\EE_{\mathcal{A},S}\autonorm{y^*(\mathcal{A}_x(S))-y_S^*(\mathcal{A}_x(S))}\\
            \leq\ &
            \kappa\EE_{\mathcal{A},S}\autonorm{\nabla_y F_S(\mathcal{A}_x(S),y^*(\mathcal{A}_x(S)))-\nabla_y F(\mathcal{A}_x(S),y^*(\mathcal{A}_x(S)))}\\
            \leq\ &
            \kappa\autopar{4\delta+\sqrt{\frac{\text{Var}(\nabla_y f)}{n}}}\\
            \leq\ &
            \kappa\autopar{4\delta+\frac{G}{\sqrt{n}}},
        \end{split}
    \end{equation}
    where the third inequality applies the same argument as that in \eqref{eq:stability_term1}. We conclude the proof by combining the two bounds above together. 
\end{proof}

\section{Proof of Theorem \ref{thm:stability and generalization NCC}}
\label{apdx:NCC stability gen}
The proof uses the idea from \citet[Theorem 3]{lei2022stability} and our proof of Theorem \ref{thm:NC-C Agnostic}. Unlike \citet{lei2022stability} which considers the minimization case, with $\Phi(x)\neq\EE[\Phi(x;\xi)]$, we need some modification in the proof. To address the non-uniqueness of $y^*(x)$ in the NC-C case, similar to the uniform convergence analysis in the NC-C case (Theorem \ref{thm:NC-C Agnostic}), we resort to the regularized objective in the proof to characterize corresponding distances.

For convenience, we recall the definition of regularized objective functions here.
\begin{equation}
    \label{eq:regularized_obj}
    \begin{split}
        \widehat{\Phi}(x)=\max_{y\in\mathcal{Y}} F(x,y)-\frac{\nu}{2}\autonorm{y}^2&,\quad
        \widehat{\Phi}_S(x)=\max_{y\in\mathcal{Y}} F_S(x,y)-\frac{\nu}{2}\autonorm{y}^2,\\
        \widehat{y}^*(x)=\argmax_{y\in\mathcal{Y}} F(x,y)-\frac{\nu}{2}\autonorm{y}^2&,\quad
        \widehat{y}_S^*(x)=\argmax_{y\in\mathcal{Y}} F_S(x,y)-\frac{\nu}{2}\autonorm{y}^2.
    \end{split}
\end{equation}
In addition, following the notation in \cite{lei2022stability}, we define
\begin{equation}
    \label{eq:proximal_operator_regularized_obj}
    \begin{split}
        \widetilde{w}_S&=\prox_{\frac{\widehat{\Phi}}{2L}}(\mathcal{A}_x(S))=\argmin_{x\in\mathcal{X}}\autobigpar{\widehat{\Phi}(x)+L\autonorm{x-\mathcal{A}_x(S)}^2},\\
        w_S&=\prox_{\frac{\widehat{\Phi}_S}{2L}}(\mathcal{A}_x(S))=
    \argmin_{x\in\mathcal{X}}\autobigpar{\widehat{\Phi}_S(x)+L\autonorm{x-\mathcal{A}_x(S)}^2}.
    \end{split}
\end{equation}
As discussed in Appendix \ref{apdx_sec:proof_NCC_unif_convergence}, the function $F(x,y)-\frac{\nu}{2}\autonorm{y}^2$ is $(L+\nu)$-smooth, and the function $\widehat{\Phi}(x)$ is $(L+\nu)$-weakly-convex (the same hold for $F_S(x,y)-\frac{\nu}{2}\autonorm{y}^2$ and $\widehat{\Phi}_S(x)$). 

First, we build up a connection between algorithm stability and proximal operators to facilitate the analysis.
\begin{lemma}[Algorithm Stability and Proximal Operators]
    \label{lm:stability_NCC_closeness}
    Let $\mathcal{A}$ be an algorithm. For any function $f$ satisfying Assumption \ref{as:ncsc-ncc} with $\mu=0$, we have the following inequalities for any two neighboring dataset $S$ and $S'$, we have
    \begin{equation}
        \begin{split}
            \autonorm{\widetilde{w}_S-\widetilde{w}_{S'}}\leq\ &
            \frac{2L}{L-\nu}\autonorm{\mathcal{A}(S)-\mathcal{A}(S')}\\
            \autonorm{w_S-w_{S'}}\leq\ &
            \frac{2L}{L-\nu}\autonorm{\mathcal{A}(S)-\mathcal{A}(S')}+\frac{2G}{n(L-\nu)}+\frac{2L(G+\nu \sqrt{D_\mathcal{Y}})}{n\nu(L-\nu)},
        \end{split}
    \end{equation}
    where $\widehat{\Phi}$ and $\widehat{\Phi}_S$ follows the definitions in \eqref{eq:regularized_obj} and \eqref{eq:proximal_operator_regularized_obj}.
\end{lemma}

The proof basically follows the proof of \citet[Lemma 15 and 16]{lei2022stability} with some differences in detailed parameters.

\begin{proof}
     For the first result, note that $\widehat{\Phi}(x)$ is $(L+\nu)$-weakly-convex and differentiable, so we have
    \begin{equation}
        \autoprod{\widetilde{w}_S-\widetilde{w}_{S'}, \nabla\widehat{\Phi}(\widetilde{w}_S)-\nabla\widehat{\Phi}(\widetilde{w}_{S'})}\geq -(L+\nu)\autonorm{\widetilde{w}_S-\widetilde{w}_{S'}}^2.
        \label{eq:app-wcphi}
    \end{equation}
    On the other hand, by the optimality of $\widetilde{w}_S$, we have
    \begin{equation}
        -2L\autopar{\widetilde{w}_S-\mathcal{A}_x(S)} -
        \nabla\widehat{\Phi}\widetilde{w}_S) \in\partial\mathcal{I}_\mathcal{X}(\widetilde{w}_S),
        \quad
        -2L\autopar{\widetilde{w}_{S'}-\mathcal{A}_x(S')} -\nabla\widehat{\Phi}(\widetilde{w}_{S'}) \in\partial\mathcal{I}_\mathcal{X}(\widetilde{w}_{S'}),
    \end{equation}
    where $\mathcal{I}_\mathcal{X}(x)$ is the indicator function of the set $\mathcal{X}$, i.e., $\mathcal{I}_\mathcal{X}(x)=0$ if $x\in\mathcal{X}$ and $\mathcal{I}_\mathcal{X}(x)=\infty$ otherwise. Since $\mathcal{X}$ is convex, the subgradient $\partial\mathcal{I}_\mathcal{X}$ is monotone, and thus
    \begin{equation}
        \begin{split}
            \autoprod{
                \widetilde{w}_S-\widetilde{w}_{S'},
                2L\autopar{\widetilde{w}_{S'}-\mathcal{A}_x(S')} - 2L\autopar{\widetilde{w}_S-\mathcal{A}_x(S)} +
                \nabla\widehat{\Phi}(\widetilde{w}_{S'}) -
                \nabla\widehat{\Phi}(\widetilde{w}_S)
            } & =
            \autoprod{
                \widetilde{w}_S-\widetilde{w}_{S'},
                \partial\mathcal{I}_\mathcal{X}(\widetilde{w}_S) -
                \partial\mathcal{I}_\mathcal{X}(\widetilde{w}_{S'})
            } \\ & \geq 0.
        \end{split}
        \label{eq:app-optimalphi}
    \end{equation}
    Combining \eqref{eq:app-wcphi} and \eqref{eq:app-optimalphi}, it follows that
    \begin{equation}
        \autoprod{\widetilde{w}_S-\widetilde{w}_{S'}, 2L\autopar{\widetilde{w}_{S'}-\mathcal{A}_x(S')}-2L\autopar{\widetilde{w}_S-\mathcal{A}_x(S)}}\geq -(L+\nu)\autonorm{\widetilde{w}_S-\widetilde{w}_{S'}}^2.
    \end{equation}
    Rearranging the terms, we have
    \begin{equation}
        (L-\nu)\autonorm{\widetilde{w}_S-\widetilde{w}_{S'}}^2
        \leq
        2L\autoprod{\widetilde{w}_S-\widetilde{w}_{S'},\mathcal{A}_x(S)-\mathcal{A}_x(S')}
        \leq
        2L\autonorm{\widetilde{w}_S-\widetilde{w}_{S'}}\autonorm{\mathcal{A}_x(S)-\mathcal{A}_x(S')}.
    \end{equation}
    We obtain the first result by dividing both sides by $(L-\nu)\autonorm{\widetilde{w}_S-\widetilde{w}_{S'}}$.

    For the second statement, applying the fact that $\widehat\Phi_S$ is weakly-convex and differentiable, 
    \begin{equation}
        \autoprod{w_S-w_{S'}, \nabla\widehat{\Phi}_S(w_S)-\nabla\widehat{\Phi}_S(w_{S'})}\geq -(L+\nu)\autonorm{w_S-w_{S'}}^2.
    \end{equation}
    Similar as \eqref{eq:app-optimalphi}, by the optimality condition of $w_S$ and $w_{S'}$,
    \begin{equation}
        \autoprod{
            w_S-w_{S'},
            2L\autopar{w_{S'}-\mathcal{A}_x(S')} -
            2L\autopar{w_S-\mathcal{A}_x(S)} +
            \nabla\widehat{\Phi}_{S'}(w_{S'}) -
            \nabla\widehat{\Phi}_S(w_S)
        } \geq 0.
    \end{equation}
    Therefore, by the above two equations, we obtain that
    \begin{equation}
        -(L+\nu)\autonorm{w_S-w_{S'}}^2 \leq
        \autoprod{
            w_S-w_{S'},
            2L\autopar{w_{S'}-\mathcal{A}_x(S')} -
            2L\autopar{w_S-\mathcal{A}_x(S)} +
            \nabla\widehat{\Phi}_{S'}(w_{S'}) -
            \nabla\widehat{\Phi}_S(w_{S'})
        }.
    \end{equation}
    By the definition of $\widehat{\Phi}_S$ and $w_S$, we rewrite the additional term $\nabla\widehat{\Phi}_{S'}(w_{S'}) -\nabla\widehat{\Phi}_S(w_{S'})$ as
    \begin{equation}
        \begin{split}
            &
            \nabla\widehat{\Phi}_{S'}(w_{S'}) -
            \nabla\widehat{\Phi}_S(w_{S'}) \\
            = &         
            \nabla_x\autopar{
                F_{S'}(w_{S'}; \widehat{y}_{S'}^*(w_{S'})) -
                \frac{\nu}{2}\autonorm{\widehat{y}_{S'}^*(w_{S'})}^2
            } -
            \nabla\widehat{\Phi}_S(w_{S'}) \\
            = &
            \nabla_x F_{S'}(w_{S'}; \widehat{y}_{S'}^*(w_{S'})) -
            \nabla\widehat{\Phi}_S(w_{S'}) \\
            = &
            \nabla_x F_{S'}(w_{S'}; \widehat{y}_{S'}^*(w_{S'})) -
            \nabla_x F_S(w_{S'}; \widehat{y}_{S'}^*(w_{S'})) +
            \nabla_x F_S(w_{S'}; \widehat{y}_{S'}^*(w_{S'})) -
            \nabla_x F_S(w_{S'}; \widehat{y}_S^*(w_{S'})) \\
            = &
            \underbrace{\frac{1}{n}\nabla_x f(w_{S'}, \widehat{y}_{S'}^*(w_{S'});\xi_i')-\frac{1}{n}\nabla_x f(w_{S'}, \widehat{y}_{S'}^*(w_{S'});\xi_i)}_{E_1} +
            \underbrace{\nabla_x F_S(w_{S'}; \widehat{y}_{S'}^*(w_{S'}))-\nabla_x F_S(w_{S'}; \widehat{y}_S^*(w_{S'}))}_{E_2},
        \end{split}
    \end{equation}
    where the third equation holds since $\nabla\widehat{\Phi}_S(w_{S'})=\nabla_x F_S(w_{S'}; \widehat{y}_S^*(w_{S'}))$.
    Thus it holds that
    \begin{equation}
        \begin{split}
            -(L+\nu)\autonorm{w_S-w_{S'}}^2
            \leq
            \autoprod{w_S-w_{S'}, -2L\autopar{w_S-\mathcal{A}_x(S)}+2L\autopar{w_{S'}-\mathcal{A}_x(S')}+E_1+E_2}.
        \end{split}
    \end{equation}
    Rearranging terms, we have
    \begin{equation}
    \label{eq:w_S}
        \begin{split}
            &(L-\nu)\autonorm{w_S-w_{S'}}^2\\
            \leq\ &
            \autoprod{w_S-w_{S'}, 2L\autopar{\mathcal{A}_x(S)-\mathcal{A}_x(S')}+E_1+E_2}\\
            \leq\ &
            \autonorm{w_S-w_{S'}} \autonorm{2L\autopar{\mathcal{A}_x(S)-\mathcal{A}_x(S')}+E_1+E_2}\\
            \leq\ &
            \autonorm{w_S-w_{S'}}\autopar{2L\autonorm{\autopar{\mathcal{A}_x(S)-\mathcal{A}_x(S')}}+\autonorm{E_1}+\autonorm{E_2}}\\
            \leq\ &
            \autonorm{w_S-w_{S'}}\autopar{2L\autonorm{\autopar{\mathcal{A}_x(S)-\mathcal{A}_x(S')}}+\frac{2G}{n}+\frac{2L(G+\nu \sqrt{D_\mathcal{Y}})}{n\nu}},
        \end{split}
    \end{equation}
    where the last inequality uses the fact that that $\autonorm{E_1}\leq2G/n$ via  Lipschitz continuity, and 
    $$
    \autonorm{E_2}\leq L\autonorm{\widehat{y}_{S'}^*(w_{S'})-\widehat{y}_S^*(w_{S'})}\stackrel{\text{Lemma \ref{lm:distance_maximizer_yS1_yS2}}}{\leq}\frac{2L(G+\nu \sqrt{D_\mathcal{Y}})}{n\nu}.
    $$ 
    It concludes the proof by diving $(L-\nu)\autonorm{w_S-w_{S'}}$ on both sides of \eqref{eq:w_S}.
\end{proof}

\begin{lemma}
    \label{lm:NCC_Stability_term1}
    Let $\mathcal{A}$ be an $\delta$-uniformly primal argument stable algorithm. For any function $f$ satisfying Assumption \ref{as:ncsc-ncc} with $\mu=0$, we have
    \begin{equation}
        \EE\automedpar{\widehat{\Phi}_S(\widetilde{w}_S)-\widehat{\Phi}(\widetilde{w}_S)}
        \leq
        \frac{2GL(L+2\nu)}{\nu(L-\nu)}\delta+\frac{G}{\nu}\autopar{4\sqrt{\frac{8L^4(L+2\nu)^2}{\nu^2(L-\nu)^2}}\delta+\frac{G}{\sqrt{n}}}+\frac{\nu}{2}D_{\mathcal{Y}}.
    \end{equation}
\end{lemma}

\begin{proof}
    Note that 
    \begin{equation}
        \begin{split}
            &\EE\automedpar{\widehat{\Phi}_S(\widetilde{w}_S)-\widehat{\Phi}(\widetilde{w}_S)}\\
            =\ &
            \EE\automedpar{F_S\autopar{\widetilde{w}_S,\widehat{y}_S^*(\widetilde{w}_S)}-F\autopar{\widetilde{w}_S,\widehat{y}^*(\widetilde{w}_S)}-\frac{\nu}{2}\autonorm{\widehat{y}_S^*(\widetilde{w}_S)}^2+\frac{\nu}{2}\autonorm{\widehat{y}^*(\widetilde{w}_S)}^2}\\
            \leq\ &
            \EE\automedpar{
            \underbrace{F_S\autopar{\widetilde{w}_S,\widehat{y}_S^*(\widetilde{w}_S)}-F_S\autopar{\widetilde{w}_S,\widehat{y}^*(\widetilde{w}_S)}}_{H_1}
            +
            \underbrace{F_S\autopar{\widetilde{w}_S,\widehat{y}^*(\widetilde{w}_S)}-F\autopar{\widetilde{w}_S,\widehat{y}^*(\widetilde{w}_S)}}_{H_2}
            }+\frac{\nu}{2}D_\mathcal{Y}.
        \end{split}
    \end{equation}
    We bound $H_2$ via the stability argument of $f(\widetilde{w}_S,\widehat{y}^*(\widetilde{w}_S);\xi)$, i.e., regarding $\autopar{\widetilde{w}_S,\widehat{y}_S^*(\widetilde{w}_S)}$ as one single variable.
    \begin{equation}
        \label{eq:stability_value_composed_alg}
        \begin{split}
            &\EE\automedpar{f(\widetilde{w}_S,\widehat{y}^*(\widetilde{w}_S);\xi)}
            -
            \EE\automedpar{f(\widetilde{w}_{S'},\widehat{y}^*(\widetilde{w}_{S'});\xi)}\\
            \leq\ &
            G\EE\automedpar{\autonorm{\widetilde{w}_S-\widetilde{w}_{S'}}+\autonorm{\widehat{y}^*(\widetilde{w}_S)-\widehat{y}^*(\widetilde{w}_{S'})}}\\
            \leq\ &
            G\EE\automedpar{\autonorm{\widetilde{w}_S-\widetilde{w}_{S'}}+\frac{L+\nu}{\nu}\autonorm{\widetilde{w}_S-\widetilde{w}_{S'}}}\\
            \leq\ &
            G\EE\automedpar{\autopar{1+\frac{L+\nu}{\nu}}\cdot \frac{2L}{L-\nu}\autonorm{\mathcal{A}_x(S)-\mathcal{A}_x(S')}}\\
            \leq\ &
            \frac{L+2\nu}{\nu}\cdot\frac{2GL}{L-\nu}\EE\automedpar{\autonorm{\mathcal{A}_x(S)-\mathcal{A}_x(S')}}\\
            \leq\ &
            \frac{2GL(L+2\nu)}{\nu(L-\nu)}\delta,
        \end{split}
    \end{equation}
    where the second inequality uses \citet[Lemma 4.3]{lin2020gradient}, and the fact that $\hat y^*$ is the optimal solution of a $(L+\nu)$-smooth and $\nu$-strongly concave maximization problem defined in \eqref{eq:regularized_obj}; the third inequality is due to Lemma \ref{lm:stability_NCC_closeness}, and the last inequality follows the definition of $\delta$-uniform primal argument stability. So we have the ``composed algorithm" $\widetilde{w}_S$ is stable\footnote{Here we call the iteration $\widetilde{w}_S=\prox_{\frac{\widehat{\Phi}}{2L}}(\mathcal{A}_x(S))=\argmin_{x\in\mathcal{X}}\autobigpar{\widehat{\Phi}(x)+L\autonorm{x-\mathcal{A}_x(S)}^2}$ as an algorithm regarding that it is a composition of the algorithm $\mathcal{A}$ and the proximal operator.} in function values, which implies~\citep{hardt2016train}
    \begin{equation}
        \EE\automedpar{F_S\autopar{\widetilde{w}_S,\widehat{y}^*(\widetilde{w}_S)}-F\autopar{\widetilde{w}_S,\widehat{y}^*(\widetilde{w}_S)}}\leq
        \frac{2GL(L+2\nu)}{\nu(L-\nu)}\delta.
    \end{equation}
    
    For the term $H_1$ above, we have
    \begin{equation}
        \label{eq:H_1_analysis}
        \begin{split}
            &\EE\automedpar{F_S\autopar{\widetilde{w}_S,\widehat{y}_S^*(\widetilde{w}_S)}-F_S\autopar{\widetilde{w}_S,\widehat{y}^*(\widetilde{w}_S)}}\\
            \leq\ &
            G\EE\autonorm{\widehat{y}_S^*(\widetilde{w}_S)-\widehat{y}^*(\widetilde{w}_S)}\\
            \leq\ &
            \frac{G}{\nu}\EE\autonorm{\nabla_yF_S\autopar{\widetilde{w}_S, \widehat{y}^*(\widetilde{w}_S)}-\nu\widehat{y}^*(\widetilde{w}_S)-\nabla_yF\autopar{\widetilde{w}_S, \widehat{y}^*(\widetilde{w}_S)}+\nu\widehat{y}^*(\widetilde{w}_S)}\\
            =\ &
            \frac{G}{\nu}\EE\autonorm{\nabla_yF_S\autopar{\widetilde{w}_S, \widehat{y}^*(\widetilde{w}_S)}-\nabla_yF\autopar{\widetilde{w}_S, \widehat{y}^*(\widetilde{w}_S)}},
        \end{split}
    \end{equation}
    where the second inequality applies Lemma \ref{lm:distance_maximizer_y_yS}. We further upper bound the RHS above using the stability argument. For $\nabla_y f(\widetilde{w}_S, \widehat{y}^*(\widetilde{w}_S);\xi)$, similar to the same argument as in \eqref{eq:stability_value_composed_alg}, we have 
    \begin{equation}
        \begin{split}
            &\EE\autonorm{\nabla_y f(\widetilde{w}_S, \widehat{y}^*(\widetilde{w}_S);\xi)-\nabla_y f(\widetilde{w}_{S'}, \widehat{y}^*(\widetilde{w}_{S'});\xi)}^2\\
            \leq\ &
            2L^2\EE\automedpar{\autonorm{\widetilde{w}_S-\widetilde{w}_{S'}}^2+\autonorm{\widehat{y}^*(\widetilde{w}_S)-\widehat{y}^*(\widetilde{w}_{S'})}^2}\\
            \leq\ &
            2L^2\EE\automedpar{\autopar{1+\autopar{\frac{L+\nu}{\nu}}^2}\autonorm{\widetilde{w}_S-\widetilde{w}_{S'}}^2}\\
            \leq\ &
            2L^2\autopar{1+\autopar{\frac{L+\nu}{\nu}}^2}\cdot\autopar{\frac{2L}{L-\nu}}^2\EE\automedpar{\autonorm{\mathcal{A}_x(S)-\mathcal{A}_x(S')}^2}\\
            \leq\ &
            \frac{8L^4(L+2\nu)^2}{\nu^2(L-\nu)^2}\delta^2,
        \end{split}
    \end{equation}
    where the second inequality comes from \citet[Lemma 4.3]{lin2020gradient}. It concludes that algorithm $\mathcal{A}$ 
    is $\delta$-uniformly primal stable. Applying \citet[Theorem 2]{lei2022stability} to  \eqref{eq:H_1_analysis}, we have
    \begin{equation}
        \begin{split}
            \EE\automedpar{F_S\autopar{\widetilde{w}_S,\widehat{y}_S^*(\widetilde{w}_S)}-F_S\autopar{\widetilde{w}_S,\widehat{y}^*(\widetilde{w}_S)}}
            \leq\ &
            \frac{G}{\nu}\EE\autonorm{\nabla_yF_S\autopar{\widetilde{w}_S, \widehat{y}^*(\widetilde{w}_S)}-\nabla_yF\autopar{\widetilde{w}_S, \widehat{y}^*(\widetilde{w}_S)}}\\
            \leq\ &   
            \frac{G}{\nu}\autopar{4\sqrt{\frac{8L^4(L+2\nu)^2}{\nu^2(L-\nu)^2}}\delta+\sqrt{\frac{\text{Var}(\nabla_y f)}{n}}}\\
            \leq\ &
            \frac{G}{\nu}\autopar{4\sqrt{\frac{8L^4(L+2\nu)^2}{\nu^2(L-\nu)^2}}\delta+\frac{G}{\sqrt{n}}},
        \end{split}
    \end{equation}
    which concludes the proof.
\end{proof}

\begin{lemma}
    \label{lm:NCC_Stability_term2}
    Let $\mathcal{A}$ be an $\delta$-uniformly primal argument stable algorithm. For any function $f$ satisfying Assumption \ref{as:ncsc-ncc} with $\mu=0$, we have
    \begin{equation}
        \begin{split}
            \EE\automedpar{\widehat{\Phi}(w_S)-\widehat{\Phi}_S(w_S)}
            & \leq
            \frac{G}{\nu}\autopar{4\sqrt{\frac{8L^2(L+2\nu)^2}{\nu^2}\autopar{\frac{4L^2}{(L-\nu)^2}\delta^2+\frac{4G^2}{n^2(L-\nu)^2} + \frac{2L^2(G+\nu\sqrt{D_\mathcal{Y}})^2}{n^2\nu^2(L-\nu)^2}}}+\frac{G}{\sqrt{n}}} \\
            & \qquad +
            \frac{G(L+2\nu)}{\nu}\autopar{\frac{2L}{L-\nu}\delta+\frac{2G}{n(L-\nu)} + \frac{2L(G+\nu\sqrt{D_\mathcal{Y}})}{n\nu(L-\nu)}}+ \frac{2G(G+\nu\sqrt{D_\mathcal{Y}})}{n\nu} + \frac{\nu}{2}D_\mathcal{Y}.
        \end{split}
    \end{equation}
\end{lemma}

\begin{proof}
    Note that
    \begin{equation}
        \begin{split}
            &\EE\automedpar{\widehat{\Phi}(w_S)-\widehat{\Phi}_S(w_S)}\\
            =\ &
            \EE\automedpar{F\autopar{w_S,\widehat{y}^*(w_S)}-F_S\autopar{w_S,\widehat{y}_S^*(w_S)}-\frac{\nu}{2}\autonorm{\widehat{y}^*(w_S)}^2+\frac{\nu}{2}\autonorm{\widehat{y}_S^*(w_S)}^2}\\
            \leq\ &
            \EE\automedpar{
            \underbrace{F\autopar{w_S,\widehat{y}^*(w_S)}-F\autopar{w_S,\widehat{y}_S^*(w_S)}}_{J_1}
            +
            \underbrace{F\autopar{w_S,\widehat{y}_S^*(w_S)}-F_S\autopar{w_S,\widehat{y}_S^*(w_S)}}_{J_2}
            }+\frac{\nu}{2}D_\mathcal{Y}.
        \end{split}
    \end{equation}
    For $J_2$, by Lemma \ref{lm:stability_NCC_closeness}, similar to the analysis of $H_2$ in the proof of Lemma~\ref{lm:NCC_Stability_term1}, we have
    \begin{equation}
        \begin{split}
            &\EE\automedpar{f(w_S,\widehat{y}_S^*(w_S);\xi)
            -
            \EE\automedpar{f(w_{S'},\widehat{y}_{S'}^*(w_{S'});\xi)}}\\
            \leq\ &
            G\EE\automedpar{\autonorm{w_S-w_{S'}}+\autonorm{\widehat{y}_S^*(w_S)-\widehat{y}_{S'}^*(w_{S})} + \autonorm{\widehat{y}_{S'}^*(w_S)-\widehat{y}_{S'}^*(w_{S'})}}\\
            \leq\ &
            G\EE\automedpar{\autonorm{w_S-w_{S'}}+\frac{L+\nu}{\nu}\autonorm{w_S-w_{S'}}} + \frac{2G(G+\nu\sqrt{D_\mathcal{Y}})}{n\nu} \\
            \leq\ &
            G\EE\automedpar{\autopar{1+\frac{L+\nu}{\nu}}\cdot \autopar{\frac{2L}{L-\nu}\autonorm{\mathcal{A}_x(S)-\mathcal{A}_x(S')}+\frac{2G}{n(L-\nu)}+\frac{2L(G+\nu D_\mathcal{Y})}{n\nu(L-\nu)}}} + \frac{2G(G+\nu\sqrt{D_\mathcal{Y}})}{n\nu} \\
            \leq\ &
            \frac{G(L+2\nu)}{\nu}\cdot\autopar{\frac{2L}{L-\nu}\EE\automedpar{\autonorm{\mathcal{A}_x(S)-\mathcal{A}_x(S')}}+\frac{2G}{n(L-\nu)}+\frac{2L(G+\nu D_\mathcal{Y})}{n\nu(L-\nu)}} + \frac{2G(G+\nu\sqrt{D_\mathcal{Y}})}{n\nu} \\
            \leq\ &
            \frac{G(L+2\nu)}{\nu}\autopar{\frac{2L}{L-\nu}\delta+\frac{2G}{n(L-\nu)}+\frac{2L(G+\nu D_\mathcal{Y})}{n\nu(L-\nu)}} + \frac{2G(G+\nu\sqrt{D_\mathcal{Y}})}{n\nu}.
        \end{split}
    \end{equation}
    It further holds that
    \begin{equation}
        \EE\automedpar{F\autopar{w_S,\widehat{y}_S^*(w_S)}-F_S\autopar{w_S,\widehat{y}_S^*(w_S)}}
        \leq
        \frac{G(L+2\nu)}{\nu}\autopar{\frac{2L}{L-\nu}\delta+\frac{2G}{n(L-\nu)}+\frac{2L(G+\nu \sqrt{D_\mathcal{Y}})}{n\nu(L-\nu)}} + \frac{2G(G+\nu\sqrt{D_\mathcal{Y}})}{n\nu}.
    \end{equation}
    For $J_1$, similar to the analysis of $H_1$ in the proof of Lemma~\ref{lm:NCC_Stability_term1}, we have
    \begin{equation}
        \begin{split}
            &\EE\autonorm{\nabla_y f(w_S, \widehat{y}^*(w_S);\xi)-\nabla_y f(w_{S'}, \widehat{y}^*(w_{S'});\xi)}^2\\
            \leq\ &
            2L^2\EE\automedpar{\autonorm{w_S-w_{S'}}^2+\autonorm{\widehat{y}^*(w_S)-\widehat{y}^*(w_{S'})}^2}\\
            \leq\ &
            2L^2\EE\automedpar{\autopar{1+\autopar{\frac{L+\nu}{\nu}}^2}\autonorm{w_S-w_{S'}}^2}\\
            \leq\ &
            2L^2\autopar{1+\autopar{\frac{L+\nu}{\nu}}^2}\cdot\EE\automedpar{4\autopar{\frac{2L}{L-\nu}}^2\autonorm{\mathcal{A}_x(S)-\mathcal{A}_x(S')}^2+4\frac{4G^2}{n^2(L-\nu)^2}+2\frac{4L^2(G+\nu \sqrt{D_\mathcal{Y}})^2}{n^2\nu^2(L-\nu)^2}}\\
            \leq\ &
            \frac{8L^2(L+2\nu)^2}{\nu^2}\autopar{\frac{4L^2}{(L-\nu)^2}\delta^2+\frac{4G^2}{n^2(L-\nu)^2}+\frac{2L^2(G+\nu \sqrt{D_\mathcal{Y}})^2}{n^2\nu^2(L-\nu)^2}}.
        \end{split}
    \end{equation}
    Combined with \citet[Theorem 2]{lei2022stability}, we have
    \begin{equation}
        \begin{split}
            &\EE\automedpar{F\autopar{w_S,\widehat{y}_S^*(w_S)}-F\autopar{w_S,\widehat{y}^*(w_S)}}\\
            \leq\ &
            \frac{G}{\nu}\EE\autonorm{\nabla_yF_S\autopar{w_S, \widehat{y}^*(w_S)}-\nabla_yF\autopar{w_S, \widehat{y}^*(w_S)}}\\
            \leq\ &   
            \frac{G}{\nu}\autopar{4\sqrt{\frac{8L^2(L+2\nu)^2}{\nu^2}\autopar{\frac{4L^2}{(L-\nu)^2}\delta^2+\frac{4G^2}{n^2(L-\nu)^2}+\frac{2L^2(G+\nu \sqrt{D_\mathcal{Y}})^2}{n^2\nu^2(L-\nu)^2}}}+\sqrt{\frac{\text{Var}(\nabla_y f)}{n}}}\\
            \leq\ &
            \frac{G}{\nu}\autopar{4\sqrt{\frac{8L^2(L+2\nu)^2}{\nu^2}\autopar{\frac{4L^2}{(L-\nu)^2}\delta^2+\frac{4G^2}{n^2(L-\nu)^2}+\frac{2L^2(G+\nu \sqrt{D_\mathcal{Y}})^2}{n^2\nu^2(L-\nu)^2}}}+\frac{G}{\sqrt{n}}},
        \end{split}
    \end{equation}
    which concludes the proof.
\end{proof}

Next, we formally demonstrate the proof for the generalization bounds in the NC-C setting. 
\begin{theorem}[Stability and Generalization, NC-C, repeat Theorem \ref{thm:stability and generalization NCC}]
    Let $\mathcal{A}$ be an $\delta$-uniformly primal argument stable algorithm, for any function $f$ satisfying Assumption \ref{as:ncsc-ncc} with $\mu=0$, we have
    \begin{equation}
        \mathbb{E}_{\mathcal{A},S}\autonorm{\nabla\Phi^{1/(2L)}\autopar{\mathcal{A}_x(S)}-\nabla\Phi_S^{1/(2L)}\autopar{\mathcal{A}_x(S)}}
        \leq
        \mathcal{O}\autopar{\delta^{\frac{1}{6}}+\autopar{\frac{1}{n}}^{\frac{1}{12}}}.
    \end{equation}
\end{theorem}

\begin{proof}
Recall that
    \begin{equation}
        \nabla\Phi^{1/(2L)}\autopar{\mathcal{A}_x(S)}
        =
        2L\autopar{\mathcal{A}_x(S)-\prox_{\frac{\Phi}{2L}}(\mathcal{A}(S))},
        \quad
        \nabla\Phi_S^{1/(2L)}\autopar{\mathcal{A}_x(S)}
        =
        2L\autopar{\mathcal{A}_x(S)-\prox_{\frac{\Phi_S}{2L}}(\mathcal{A}(S))}.
    \end{equation}
    Since $\Phi$ is $L$-weakly-convex and $G$-Lipschitz \citep[Lemma 4.7]{lin2020gradient},  it holds that
    \begin{equation}
        \autonorm{\nabla\Phi^{1/(2L)}\autopar{\mathcal{A}_x(S)}-\nabla\Phi_S^{1/(2L)}\autopar{\mathcal{A}_x(S)}}
        =
        2L\autonorm{\prox_{\frac{\Phi}{2L}}(\mathcal{A}(S))-\prox_{\frac{\Phi_S}{2L}}(\mathcal{A}(S))}.
    \end{equation}
    Utilizing the regularized objective function, we have
    \begin{equation}
        \label{eq:NCC_decomposition}
        \begin{split}
            &\autonorm{\prox_{\frac{\Phi}{2L}}(\mathcal{A}(S))-\prox_{\frac{\Phi_S}{2L}}(\mathcal{A}(S))}\\
            \leq\ &
            \autonorm{\prox_{\frac{\Phi}{2L}}(\mathcal{A}(S))-\prox_{\frac{\widehat{\Phi}}{2L}}(\mathcal{A}(S))}
            +
            \autonorm{\prox_{\frac{\widehat{\Phi}}{2L}}(\mathcal{A}(S))-\prox_{\frac{\widehat{\Phi}_S}{2L}}(\mathcal{A}(S))}
            +
            \autonorm{\prox_{\frac{\widehat{\Phi}_S}{2L}}(\mathcal{A}(S))-\prox_{\frac{\Phi_S}{2L}}(\mathcal{A}(S))}\\
            \leq\ &
            2\sqrt{\frac{\nu D_\mathcal{Y}}{L-\nu}}+\autonorm{\widetilde{w}_S-w_S},
        \end{split}
    \end{equation}
    where the second inequality comes from Lemma \ref{lemma:nc_sc_to_ncc} with $\lambda=\frac{1}{2L}$.
    So now the problem is transformed to characterizing the distance between $\widetilde{w}_S$ and $w_S$ coming from the regularized surrogate objective which is NC-SC.
    
    Since the function $\widehat{\Phi}(x)+L\autonorm{x-\mathcal{A}(S)}^2$ is $(L-\nu)$-strongly convex, and by the definition of $\widetilde{w}_S$, we have
    \begin{equation}
        \begin{split}
            &\frac{L-\nu}{2}\EE\autonorm{w_S-\widetilde{w}_S}^2\\
            \leq\ &
            \EE\widehat{\Phi}\autopar{w_S}+L\autonorm{w_S-\mathcal{A}(S)}^2
            -\autopar{\widehat{\Phi}\autopar{\widetilde{w}_S}+L\autonorm{\widetilde{w}_S-\mathcal{A}(S)}^2}\\
            =\ &
            \EE\widehat{\Phi}_S\autopar{w_S}+L\autonorm{w_S-\mathcal{A}(S)}^2
            -\autopar{\widehat{\Phi}_S\autopar{\widetilde{w}_S}+L\autonorm{\widetilde{w}_S-\mathcal{A}(S)}^2}
            +
            \autopar{\widehat{\Phi}\autopar{w_S}-\widehat{\Phi}_S\autopar{w_S}}+\autopar{\widehat{\Phi}_S\autopar{\widetilde{w}_S}-\widehat{\Phi}\autopar{\widetilde{w}_S}}\\
            \leq\ &
            \EE\autopar{\widehat{\Phi}\autopar{w_S}-\widehat{\Phi}_S\autopar{w_S}}+\autopar{\widehat{\Phi}_S\autopar{\widetilde{w}_S}-\widehat{\Phi}\autopar{\widetilde{w}_S}}\\
            \leq\ &
            \frac{G}{\nu}\autopar{4\sqrt{\frac{8L^2(L+2\nu)^2}{\nu^2}\autopar{\frac{4L^2}{(L-\nu)^2}\delta^2+\frac{4G^2}{n^2(L-\nu)^2}+\frac{2L^2(G+\nu \sqrt{D_\mathcal{Y}})^2}{n^2\nu^2(L-\nu)^2}}}+\frac{G}{\sqrt{n}}} \\
            & \qquad\quad +
            \frac{G(L+2\nu)}{\nu}\autopar{\frac{2L}{L-\nu}\delta+\frac{2G}{n(L-\nu)} + \frac{2L(G+\nu\sqrt{D_\mathcal{Y}})}{n\nu(L-\nu)}} + \frac{2G(G+\nu\sqrt{D_\mathcal{Y}})}{n\nu} \\
            &\qquad\qquad\qquad +
            \frac{2GL(L+2\nu)}{\nu(L-\nu)}\delta+\frac{G}{\nu}\autopar{4\sqrt{\frac{8L^4(L+2\nu)^2}{\nu^2(L-\nu)^2}}\delta+\frac{G}{\sqrt{n}}}+\nu D_{\mathcal{Y}},
        \end{split}
    \end{equation}
    where the second inequality uses the optimality of $w_S$ and $\widehat{w}_S$, the last inequality is due to Lemma \ref{lm:NCC_Stability_term1} and \ref{lm:NCC_Stability_term2}. Now we choose $\nu$ to simplify the RHS above. For simplicity, first we set $\nu\leq\frac{L}{2}$, so $L-\nu\geq\frac{L}{2}$, $L+2\nu\leq 2L$. The RHS above simplifies to
    \begin{equation}
        \begin{split}
            &\frac{L-\nu}{2}\EE\autonorm{w_S-\widetilde{w}_S}^2\\
            \leq\ &
            \frac{G}{\nu}\autopar{4\sqrt{\frac{32L^4}{\nu^2}\autopar{16\delta^2+\frac{16G^2}{n^2L^2}+\frac{16(G+\nu \sqrt{D_\mathcal{Y}})^2}{n^2\nu^2}}}+\frac{G}{\sqrt{n}}}
            +
            \frac{2GL}{\nu}\autopar{4\delta+\frac{4G}{nL} + \frac{4(G + \nu\sqrt{D_\mathcal{Y}})}{n\nu}} \\
            & \qquad\quad +
            \frac{2G(G + \nu\sqrt{D_\mathcal{Y}})}{n\nu} + \frac{8GL}{\nu}\delta+\frac{G}{\nu}\autopar{4\sqrt{\frac{128L^4}{\nu^2}}\delta+\frac{G}{\sqrt{n}}}+\nu D_{\mathcal{Y}} \\
            \leq\ &
            \frac{G}{\nu}\autopar{\frac{128L^2}{\nu}\sqrt{\delta^2+\frac{G^2}{n^2L^2}+\frac{(G+\nu\sqrt{D_\mathcal{Y}})^2}{n^2\nu^2}}+\frac{G}{\sqrt{n}}}
            +
            \frac{8GL}{\nu}\autopar{2\delta+\frac{G}{nL} + \frac{G+\nu\sqrt{D_\mathcal{Y}}}{n\nu}} \\
            & \qquad\quad +
            \frac{G}{\nu}\autopar{\frac{64L^2}{\nu}\delta + \frac{2(G+\nu\sqrt{D_\mathcal{Y}})}{n} + \frac{G}{\sqrt{n}}}+\nu D_{\mathcal{Y}} \\
            \leq\ &
            \frac{G}{\nu}\autopar{\frac{128L^2}{\nu}\autopar{\delta+\frac{G}{nL}+\frac{G+\nu \sqrt{D_\mathcal{Y}}}{n\nu}}+\frac{G}{\sqrt{n}}} +
            \frac{8GL}{\nu}\autopar{2\delta+\frac{G}{nL} + \frac{G+\nu\sqrt{D_\mathcal{Y}}}{n\nu}} \\
            & \qquad\quad +
            \frac{G}{\nu}\autopar{\frac{64L^2}{\nu}\delta + \frac{2(G+\nu\sqrt{D_\mathcal{Y}})}{n} + \frac{G}{\sqrt{n}}}+\nu D_{\mathcal{Y}} \\
            =\ &
            64G\autopar{3\delta+\frac{2G}{nL}+\frac{2G+2\nu\sqrt{D_\mathcal{Y}}}{n\nu}}\frac{L^2}{\nu^2} +
            2G\autopar{
                8\delta + \frac{G}{\sqrt{n}L} + \frac{4G}{nL} + \frac{G+\nu\sqrt{D_\mathcal{Y}}}{n}\autopar{\frac{4}{\nu} + \frac{1}{L}}
            }
            \frac{L}{\nu} + \nu D_{\mathcal{Y}} \\
            =\ &
            \mathcal{O}\autopar{\frac{1}{n}}\cdot\mathcal{O}\autopar{\frac{1}{\nu^3}+\frac{1}{\nu^2}+\frac{1}{\nu} + 1}
            +
            \mathcal{O}\autopar{\frac{1}{\sqrt{n}}}\cdot\mathcal{O}\autopar{\frac{1}{\nu}}
            +
            \mathcal{O}\autopar{\delta}\cdot\mathcal{O}\autopar{\frac{1}{\nu^2}+\frac{1}{\nu}}+
            \mathcal{O}(1)\nu,
        \end{split}
    \end{equation}
    where the last step hides all other dependence on parameters except $\delta$ and $n$. Let
    \begin{equation}
        \frac{1}{\nu}=\mathcal{O}\autopar{\min\autopar{\delta^{-\frac{1}{3}},n^{\frac{1}{4}}}},
    \end{equation}
    with $\delta$ and $1/n$ small enough such that $\nu\leq L/2$ holds. The setting of $\nu$ implies that
    \begin{equation}
        \begin{split}
            \EE\autonorm{w_S-\widetilde{w}_S}^2
            \leq\ &
            \mathcal{O}\autopar{\frac{1}{n}}\cdot\mathcal{O}\autopar{\frac{1}{\nu^3}}
            +
            \mathcal{O}\autopar{\frac{1}{\sqrt{n}}}\cdot\mathcal{O}\autopar{\frac{1}{\nu}}+
            \mathcal{O}\autopar{\delta}\cdot\mathcal{O}\autopar{\frac{1}{\nu^2}}+
            \mathcal{O}(1)\nu\\
            \leq\ &
            \mathcal{O}\autopar{\frac{1}{n}}\cdot\mathcal{O}\autopar{n^{\frac{3}{4}}}+\mathcal{O}\autopar{\frac{1}{\sqrt{n}}}\cdot\mathcal{O}\autopar{n^{\frac{1}{4}}}+\mathcal{O}\autopar{\delta}\cdot\mathcal{O}\autopar{\delta^{-\frac{2}{3}}}
            +
            \mathcal{O}\autopar{\delta^{\frac{1}{3}}+n^{-\frac{1}{4}}}\\
            =\ &
            \mathcal{O}\autopar{\delta^{\frac{1}{3}}+n^{-\frac{1}{4}}}.
        \end{split}
    \end{equation}    
    As a result, we have
    \begin{equation}
        \EE\autonorm{w_S-\widetilde{w}_S}\leq\mathcal{O}\autopar{\delta^{\frac{1}{6}}+n^{-\frac{1}{8}}}.
    \end{equation}
    Further incorporating \eqref{eq:NCC_decomposition}, we have
    \begin{equation}
        \begin{split}
            &\mathbb{E}_{\mathcal{A},S}\autonorm{\nabla\Phi^{1/(2L)}\autopar{\mathcal{A}_x(S)}-\nabla\Phi_S^{1/(2L)}\autopar{\mathcal{A}_x(S)}}\\
            \leq\ &
            2\sqrt{\frac{\nu D_\mathcal{Y}}{L-\nu}}+\EE\autonorm{\widetilde{w}_S-w_S}\\
            \leq\ &
            \mathcal{O}\autopar{\sqrt{\nu}}+\EE\autonorm{\widetilde{w}_S-w_S}\\
            =\ &
            \mathcal{O}\autopar{\delta^{\frac{1}{6}}+n^{-\frac{1}{8}}},
        \end{split}
    \end{equation}
    which concludes the proof.
\end{proof}

\section{Proof of Corollary \ref{thm:NC-SC SGDA Specific} and \ref{thm:NC-C SGDA Specific}}
\label{apdx:NCSC SGDA}

\begin{corollary}
    Assume the function $f$ is NC-SC as defined in Assumption \ref{as:ncsc-ncc}, then if we run SGDA for $T$ iterations with stepsize $\autopar{\alpha_x,\alpha_y}=\autopar{\frac{c}{t},\frac{cr^2}{t}}$ for some constant $c>0$ and $1\leq r<\kappa$, we have 
    \begin{equation}
        \begin{split}
            \EE_{S,\mathcal{A}}\autonorm{\nabla\Phi\autopar{\mathcal{A}_x(S)}-\nabla\Phi_S\autopar{\mathcal{A}_x(S)}}
            \leq
            \autopar{1+\kappa}
            \autopar{\frac{8G\autopar{1+\frac{1}{cL(r+1)}}}{n}\autopar{24\kappa cL(r+1)}^{\frac{1}{cL(r+1)+1}}T^{\frac{cL(r+1)}{cL(r+1)+1}}+\frac{G}{\sqrt{n}}}.
        \end{split}
    \end{equation}
\end{corollary}

\begin{proof}
    Denote $\Delta_t\triangleq\sqrt{\autonorm{x_t-x_t'}^2+\autonorm{y_t-y_t'}^2}$, and the event $E_{t_0}=\mathbf{1}\autopar{\Delta_{t_0}=0}$, we have for the full gradient $\nabla f=(\nabla_x f, \nabla_y f)^\top$,
    \begin{equation}
        \begin{split}
            & \mathbb{E}\autonorm{\nabla f(x_t,y^*(x_t);\xi)-\nabla f(x_t',y^*(x_t');\xi)}\\
            =\ &
            \mathbb{P}(E_{t_0})\mathbb{E}\automedpar{\autonorm{\nabla f(x_t,y^*(x_t);\xi)-\nabla f(x_t',y^*(x_t');\xi)}|E_{t_0}}\\
            &\qquad\qquad\qquad\qquad+
            \mathbb{P}(E_{t_0}^C)\mathbb{E}\automedpar{\autonorm{\nabla f(x_t,y^*(x_t);\xi)-\nabla f(x_t',y^*(x_t');\xi)}|E_{t_0}^C}\\
            \leq\ &
            \mathbb{E}\automedpar{\autonorm{\nabla f(x_t,y^*(x_t);\xi)-\nabla f(x_t',y^*(x_t');\xi)}|E_{t_0}}+2G\mathbb{P}(E_{t_0}^C)\\
            \leq\ &
            \mathbb{E}\automedpar{\autonorm{\nabla_x f(x_t,y^*(x_t);\xi)-\nabla_x f(x_t',y^*(x_t');\xi)}+\autonorm{\nabla_y f(x_t,y^*(x_t);\xi)-\nabla_y f(x_t',y^*(x_t');\xi)}|E_{t_0}}+2G\mathbb{P}(E_{t_0}^C)\\
            \leq\ &
            2L\mathbb{E}\automedpar{\autonorm{x_t-x_t'}+\autonorm{y^*(x_t)-y^*(x_t')}|E_{t_0}}+2G\mathbb{P}(E_{t_0}^C)\\
            \leq\ &
            2(1+\kappa)L\mathbb{E}\automedpar{\autonorm{x_t-x_t'}|E_{t_0}}+2G\frac{t_0}{n}\\
            \leq\ &
            4\kappa L\mathbb{E}\automedpar{\Delta_t|\Delta_{t_0}=0}+2G\frac{t_0}{n},
        \end{split}
    \end{equation}
    the remaining steps aims to bound $\mathbb{E}\automedpar{\Delta_t|\Delta_{t_0}=0}$, which are the same as those in \citep[Appendix B.8]{farnia2021train}, with that we will get
    \begin{equation}
        \mathbb{E}\autonorm{\nabla f(x_T,y^*(x_T);\xi)-\nabla f(x_T',y^*(x_T');\xi)}
        \leq
        \frac{4\kappa L \cdot 12 G}{nL}\autopar{\frac{T}{t_0}}^{cL(r+1)}+\frac{2G}{n}t_0,
    \end{equation}
    to minimize the RHS above over $t_0$, we set
    \begin{equation}
        t_0=\autopar{\frac{\frac{4\kappa L \cdot 12 G}{nL}\cdot cL(r+1)}{\frac{2G}{n}}}^{\frac{1}{cL(r+1)+1}}\cdot T^{\frac{cL(r+1)}{cL(r+1)+1}}
        =
        \autopar{24\kappa cL(r+1)}^{\frac{1}{cL(r+1)+1}}T^{\frac{cL(r+1)}{cL(r+1)+1}}
    \end{equation}
    and we get
    \begin{equation}
        \mathbb{E}\autonorm{\nabla f(x_T,y^*(x_T);\xi)-\nabla f(x_T',y^*(x_T');\xi)}
        \leq
        \frac{2G\autopar{1+\frac{1}{cL(r+1)}}}{n}\autopar{24\kappa cL(r+1)}^{\frac{1}{cL(r+1)+1}}T^{\frac{cL(r+1)}{cL(r+1)+1}}.
    \end{equation}
    We conclude the proof by incorporating the above bound with Theorem \ref{thm:stability and generalization}.
\end{proof}

\begin{corollary}
    Assume the function $f$ is NC-C as defined in Assumption \ref{as:ncsc-ncc} with $\mu=0$, then if we run SGDA for $T$ iterations with stepsize
    $\max\{\alpha_x,\alpha_y\}\leq\frac{c}{t}$ for some constant $c>0$, we have 
    \begin{equation}
        \begin{split}
            \EE_{S,\mathcal{A}} \autonorm{\nabla\Phi^{1/(2L)}\autopar{\mathcal{A}_x(S)}-\nabla\Phi_S^{1/(2L)}\autopar{\mathcal{A}_x(S)}}
            \leq
            \mathcal{O}\autopar{\autopar{\frac{T^\frac{cL}{cL+1}}{n}}^{1/6} + \autopar{\frac{1}{n}}^{1/8}}.
        \end{split}
    \end{equation}
\end{corollary}

\begin{proof}
    Denote $\Delta_t\triangleq\sqrt{\autonorm{x_t-x_t'}^2+\autonorm{y_t-y_t'}^2}$, and the event $E_{t_0}=\mathbf{1}\autopar{\Delta_{t_0}=0}$, we have that
    \begin{equation}
        \begin{split}        
            \mathbb{E}\| x_t - x_t'\|
            & \leq
            \EE\Delta_t \\
            & =
            \mathbb{P}(E_{t_0})\mathbb{E}\automedpar{
            \Delta_t|E_{t_0}} +
            \mathbb{P}(E_{t_0}^C)\mathbb{E}\automedpar{
            \Delta_t|E_{t_0}^C} \\
            & \leq
            \mathbb{E}\automedpar{\Delta_t|E_{t_0}}+2\sqrt{D_\mathcal{X} + D_\mathcal{Y}}\mathbb{P}(E_{t_0}^C) \\
            & \leq
            \mathbb{E}\automedpar{\Delta_t|\Delta_{t_0}=0} + 2\sqrt{D_\mathcal{X} + D_\mathcal{Y}}\frac{t_0}{n},
        \end{split}
    \end{equation}
    the remaining steps aims to bound $\mathbb{E}\automedpar{\Delta_t|\Delta_{t_0}=0}$, with the results in \citep[Appendix B.9]{farnia2021train}, we get
    \begin{equation}
        \mathbb{E}\autonorm{x_T-x_T'}
        \leq
        \frac{2G}{nL}\autopar{\frac{T}{t_0}}^{cL} + 2\sqrt{D}\frac{t_0}{n},
    \end{equation}
    where we let $D=D_\mathcal{X} + D_\mathcal{Y}$. To minimize the RHS above over $t_0$, we set
    \begin{equation}
        t_0 = \autopar{\frac{cG}{\sqrt{D}}}^{1/(cL+1)}T^\frac{cL}{cL+1},
    \end{equation}
    and we get
    \begin{equation}
        \mathbb{E}\autonorm{x_T-x_T'}
        \leq
        2\autopar{\frac{G}{L}\autopar{\frac{1}{cG}}^\frac{cL}{cL+1} + (cG)^\frac{1}{cL+1}}D^\frac{cL}{2(cL+1)}\frac{T^\frac{cL}{cL+1}}{n}.
    \end{equation}
    The proof is complete by incorporating the above bound with Theorem \ref{thm:stability and generalization NCC}.
\end{proof}

\section{Proof of Corollary \ref{thm:NC-SC SDA} and \ref{thm:NC-C SDA}}
\label{apdx:NCC SGDA}
\begin{proof}
    For the NC-SC case, by \citet[Corollary 6]{lei2022stability}, we know the algorithm is $\delta$ uniformly primal stable in gradients with $\delta=2G\sqrt{T/n}$, the proof is complete by Theorem \ref{thm:stability and generalization}.
    
    For the NC-C case, we want to derive the uniform primal argument stability, the flow here is almost the same as the proof of \citet[Corollary 6]{lei2022stability}, let $\Omega_t=\autonorm{x_t-x_t'}^2$, define the event $E_\Omega$ as that the only different data point $\xi_i$ is selected by the algorithm $\mathcal{A}$, so we have
\begin{equation}
    \EE\automedpar{\Omega_t}\leq \EE\automedpar{\Omega_t~|~E_\Omega}P(E_\Omega) 
    +
    \EE\automedpar{\Omega_t~|~E_\Omega^C}\mathbb{P}(E_\Omega^C)
    \leq
    \EE\automedpar{\Omega_t~|~E_\Omega^C}\frac{T}{n}
    \leq\frac{4D_\mathcal{X}T}{n},
\end{equation}
so the algorithm is $\sqrt{4D_\mathcal{X}T/n}$-uniformly primal argument stable. Then we conclude the proof by substituting the above stability results into Theorem \ref{thm:stability and generalization NCC}, i.e.,
\begin{equation}
    \begin{split}
        &\EE_{S,\mathcal{A}}\autonorm{\nabla\Phi^{1/(2L)}\autopar{\mathcal{A}_x(S)}-\nabla\Phi_S^{1/(2L)}\autopar{\mathcal{A}_x(S)}}\\
    =\ &
    \mathcal{O}\autopar{\delta^{\frac{1}{6}}+n^{-\frac{1}{8}}}
    =
    \mathcal{O}\autopar{\autopar{\frac{T}{n}}^{\frac{1}{12}}+\autopar{\frac{ 1}{n}}^{\frac{1}{8}}}
    =
    \mathcal{O}\autopar{\autopar{\frac{T}{n}}^{\frac{1}{12}}+\autopar{\frac{ 1}{n}}^{\frac{1}{8}}}.
    \end{split}
\end{equation}
\end{proof}

\end{document}